\documentclass[11pt]{amsart}
\usepackage{stmaryrd}
\usepackage{color}
\usepackage{amsfonts}
\usepackage{mathrsfs}
\usepackage{amssymb,amsmath,amsthm}
\usepackage{epsfig}
\usepackage{graphicx}
\usepackage{appendix}
\usepackage{lineno}
\usepackage{enumerate}
\usepackage{comment}
\usepackage[numbers,sort&compress]{natbib}
\setlength{\textwidth}{162mm} \setlength{\textheight}{22cm}
\setlength{\headheight}{1cm} \setlength{\topmargin}{-0.4cm}
\setlength{\oddsidemargin}{0cm} \setlength{\evensidemargin}{0cm}
\setlength{\parskip}{1mm} \setlength{\unitlength}{1mm}

\newcommand{\dps}{\displaystyle}
\newtheorem{proposition}{\indent Proposition}[section]
\newtheorem{theorem}{\indent Theorem}[section]
\newtheorem{lemma}{\indent Lemma}[section]

\newtheorem{definition}{\indent Definition}[section]
\newtheorem{remark}{\indent Remark}[section]

\newtheorem{example}{\indent Example}[section]
\newcommand{\ba}{\begin{array}}\newcommand{\ea}{\end{array}}
\newcommand{\be}{\begin{eqnarray}}\newcommand{\ee}{\end{eqnarray}}
\newcommand{\beq}{\begin{equation}}\newcommand{\eeq}{\end{equation}}
\newcommand{\bex}{\begin{eqnarray*}}
\newcommand{\eex}{\end{eqnarray*}}

\def\bq{\begin{equation}}
\def\eq{\end{equation}}
\def\beq{\begin{equation*}}
\def\eeq{\end{equation*}}
\def\br{\begin{eqnarray}}
\def\er{\end{eqnarray}}
\def\brr{\bq\begin{array}{rlll}}
\def\err{\end{array}\eq}
\def\barr{\bq\begin{array}{r@{}l}}
\def\earr{\end{array}\eq}
\def\bearr{\beq\begin{array}{r@{}l}}
\def\eearr{\end{array}\eeq}
\def\brr{\bq\begin{array}{r@{}l}}
\def\err{\end{array}\eq}
\def\bry{\beq\begin{array}{r@{}l}}
\def\ery{\end{array}\eeq}

\font\tenbi=cmmib10   at 11 pt
\font\sevenbi=cmmib10 at 9pt
\font\fivebi=cmmib7 at 6pt
\newfam\bifam
\textfont\bifam=\tenbi \scriptfont\bifam=\sevenbi  \scriptscriptfont\bifam=\fivebi

\font\sixtdb=msbm10 at 16 pt \font\tendb=msbm10 at 12 pt  \font\sevendb=msbm7
\newfam\dbfam
\textfont\dbfam=\sixtdb

\textfont\dbfam=\tendb \scriptfont\dbfam=\sevendb





\title[Fractional Jacobi Spectral Methods for Singular VIEs]
{A M\"untz-Collocation spectral method for weakly singular Volterra integral equations$^*$}
\author[ D.M. Hou, Y.M. Lin, M. Azaiez, \& C.J. Xu ]
{Dianming Hou$^{1,2}$
\quad
Yumin Lin$^{2}$
\quad
Mejdi Azaiez$^{2,3}$
\quad
Chuanju Xu$^{2,3,4}$}
\thanks{\hskip -12pt
${}^*$This research is partially supported by NSF of China
(Grant numbers 
11971408, 51661135011, 11421110001, and
91630204). The third author has received financial support from the
French State in the frame of the ``Investments for the future" Programme Idex Bordeaux,
reference ANR-10-IDEX-03-02 and NSFC/ANR joint program 51661135011/ANR-16-
CE40-0026-01.\\
${}^{1}$School of Mathematics and Statistics, Jiangsu Normal
University, 221116 Xuzhou, China.\\
${}^{2}$School of Mathematical Sciences and
Fujian Provincial Key Laboratory of Mathematical Modeling and High Performance
Scientific Computing, Xiamen
University, 361005 Xiamen, China.\\
${}^{3}$Bordeaux INP, Laboratoire I2M UMR 5295, 33607 Pessac, France.\\
${}^{4}$Corresponding author. Email: cjxu@xmu.edu.cn}

\keywords {M\"untz-Collocation spectral method, Volterra integral equations, weakly singular, exponential convergence}
\subjclass{65N35, 65M70, 45D05, 45Exx, 41A10, 41A25}

\begin{document}
\graphicspath{{figures/},}
\date {\today}
\maketitle
\begin{abstract}
In this paper we propose and analyze a fractional Jacobi-collocation spectral method for the second kind Volterra integral equations (VIEs)
with weakly singular kernel $(x-s)^{-\mu},0<\mu<1$. First we develop a family of fractional Jacobi polynomials, along with
basic approximation results for some weighted projection and interpolation operators defined in suitable weighted Sobolev spaces.
Then we construct an efficient fractional Jacobi-collocation spectral method for the VIEs
using the zeros of the new developed fractional Jacobi polynomial.
A detailed convergence analysis is carried out to derive error estimates of the numerical solution in both
$L^{\infty}$- and weighted $L^{2}$-norms.
The main novelty of the paper is that the proposed method is highly efficient for typical solutions that VIEs
usually possess. Precisely, it is proved that the exponential convergence rate can be achieved for solutions which are smooth
after the variable change $x\rightarrow x^{1/\lambda}$ for a suitable real number $\lambda$.
Finally a series of numerical examples are presented to demonstrate the efficiency of the method.
\end{abstract}

\section{Introduction}
Among various methods, spectral methods have proven to be one of the most efficient approaches
for solving partial differential equations.
The efficiency of spectral methods, however, depends crucially on the regularity of the solution.
The convergence of the spectral method is exponential, provided that the solution of the problem is sufficiently smooth. On the other word, the traditional spectral methods will lose high accuracy
when facing  problems with solutions of limited regularity.
Such problems include the second kind Volterra integral equations (VIEs), which we consider in this paper.
It has been well known that the solution of
the VIEs with weakly singular kernel is of low regularity at the boundaries of the domain.

Volterra integral equations model evolutionary problems with memory, which can be found in a number of  disciplines, such as electromagnetic scattering, demography, viscoelastic materials, insurance mathematics, etc. They have been subject of many theoretical and numerical investigations.
In this paper we consider the following integral equation:
\begin{equation*}
u(x)=g(x)+\int_{0}^{x}(x-s)^{-\mu}K(x,s)u(s)ds,~~~0<\mu<1,~~x\in I:=[0,1],
\end{equation*}
where the source function $g(x)\in C(I)$, and $K\in C(I\times I)$ with $K(x,x)\neq 0$ for $x\in I$.
Our aim is to design a numerical method for this equation, which will be shown
more efficient than the existing methods
in capturing the singularities of typical solutions of this kind of equations.

In fact there exists a considerable amount of numerical methods in the literature
for the equation under consideration; see, e.g., \cite{SSW15,WS16,SWG14,HM16,Bru83,Bru85,Bru04,Che13,Chen09,Che10,Tang94,Li12,Li15} and the references therein.
In the framework of high order methods for example,
a Legendre-collocation method, along with rigorous error analysis, was proposed in \cite{Tang08,Tang10}
for VIEs with smooth kernel (i.e., $\mu=0$) and regular source term; There have also been
spectral methods \cite{Ali08,Ali09}, used to approximate smooth solutions of delay differential or
integral equations with two or more vanishing delays. In \cite{Chen09,Che13},
Chen et al. proposed and analyzed a spectral Jacobi-collocation method for VIEs under assumption
that the underlying solutions are sufficiently smooth.
However, as demonstrated in \cite{Bru04}, VIEs with weakly singular kernel typically have solutions whose derivatives are unbounded at the left end point, even when the source term $g(x)$ is regular. Partially for this reason, the numerical treatment of the VIEs is not simple. In particular, numerical methods based on approximations of high order polynomials, such as spectral methods or p-version finite element methods, fail to yield high order convergence since increasing the polynomial degree does not improve the approximation accuracy
for low regular functions. Nevertheless, efforts have been made to handle this problem in some different circumstances.
Let us mention, among others,
Chen and Tang \cite{Che10} for a transformation method to transform the equation into a new
Volterra integral equation defined on the standard interval $[-1,1]$, so that the solution of the new equation possesses better regularity.
Recently, Li, Tang, and Xu \cite{Li15} have constructed a Chebyshev and Legendre pseudo-spectral Galerkin method for weakly singular
VIEs based on a new function transformation to make the solution smooth.
Allaei et al.  \cite{All16} used a transformation of the independent variable for the Volterra integral equation
with weakly singular kernel, then constructed a Jacobi collocation method for the transformed equation.
It is worthwhile to emphasize that the efficiency of the aforementioned approaches depends on specific form
of the exact solution and the assumption of sufficient smoothness of the source term.
Very recently, Hou and Xu \cite{Hou17} proposed a general framework using fractional polynomials
for some weakly singular integro-differential equations and
fractional differential equations. They showed that the convergence rate of the fractional spectral methods
is closely related to the regularity of the transformation of the exact solution by the variable change
$x\rightarrow x^{1/\lambda}$, where $\lambda$ is a suitable parameter
appearing in the fractional approximation space.

This paper aims at designing, developing, and testing a fractional Jacobi spectral method for the weakly singular VIEs,
which has the capability to capture the limited regular solution in a more efficient way. The new method will make use of
the fractional Jacobi polynomial $J^{\alpha,\beta,\lambda}_{N+1}(x)$, recently introduced
in \cite{Hou17,Hou18} to deal with some singular problems.
The advantage of the proposed approach is that the exponential convergence can be guaranteed for solutions, which are smooth after
the variable change $x\rightarrow x^{1/\lambda}$ for suitable parameter $\lambda$.
It is worth to emphasize that
the present method is different from the one proposed in \cite{Hou17}.
Firstly, the method in \cite{Hou17} was for the integral equation:
$
u_{x}=a_{1}u(x)+a_{2}~{_0}I_{x}^{\mu}u(x)+f(x),
$
where $a_{1}$ and $a_{2}$ are constants, which were assumed to satisfy some
conditions in order to guarantee the well-posedness of the discrete problem; see Theorem 4.1 in \cite{Hou17}.
Secondly, the present method is of Collocation type while the method in \cite{Hou17} was constructed under
Galerkin or Petrov-Galerkin framework. It is known that the method of Collocation type is easier to implement (usually
more difficult to analyze) than the Galerkin type.

Below are the main ingredients of the paper.

$\bullet$ First, one key point of the study
is the new $\lambda-$polynomial space constructed as the approximation space. Some new derivatives are defined
such that the fractional Jacobi polynomials can inherit some desirable properties from classical Jacobi polynomials.
We also set up the relationship between the new defined derivatives and the classical ones.
Then we derive the approximating results for some projection and interpolation operators in weighted Sobolev spaces,
This is the main content of Section 2.

$\bullet$  In Section 3, the fractional Jacobi spectral-collocation method is constructed for weakly singular VIEs.
A rigorous convergence analysis is carried out for the proposed method and
some optimal error estimates in $L^{\infty}-$ and $L^{2}_{\omega^{\alpha,\beta,\lambda}}-$norms are derived.

$\bullet$  Some numerical examples are presented in Section 4 to validate the theoretical prediction.
Finally we give some concluding remarks in the last section.

 \section{Preliminaries}
 \setcounter{equation}{0}
 In this section, we will define the fractional Jacobi polynomials and study their fundamental properties.
 Especially we will introduce some weighted projection and interpolation operators and derive optimal error estimates for these operators
 in different weighted Sobolev spaces.
 These results play a key role in the convergence analysis for the numerical method that we design for the VIEs later on.

 \subsection{Some basic properties of fractional Jacobi polynomials}

We begin by defining the $\lambda-$polynomial space as follows:
\beq
P^{\lambda}_{n}(\mathbb{R}^{+}):=span\big\{1,x^{\lambda},x^{2\lambda},\cdots,x^{n\lambda}\big\},
\eeq
where $\mathbb{R}^{+}=[0,+\infty), 0<\lambda\leq 1.$

A $\lambda-$polynomial of degree $n$ is denoted by
\beq
p^{\lambda}_{n}(x):=k_{n}x^{n\lambda}+k_{n-1}x^{(n-1)\lambda}+\cdots+k_{1}x^{\lambda}+k_{0},\ \ \ k_{n}\neq0, x\in \mathbb{R}^{+},
\eeq
where $\big\{k_{i}\big\}_{i=0}^{n}$ are real constants and $k_{n}$ is called the leading coefficient of $p^{\lambda}_{n}.$
{\color{black} Hereafter, we simply denote the degree of  $\lambda-$polynomial
$p^{\lambda}_{n}(x)$ by $\deg(p^{\lambda}_{n})$.}

Let $I:=[0,1]$ and $\omega(x)\in L^{1}(I)$ be a positive weight function. A sequence of $\lambda-$polynomials $\big\{p^{\lambda}_{n}\big\}_{n=0}^{\infty}$ with $\deg(p^{\lambda}_{n})=n$ is said to be orthogonal in $L^{2}_{\omega}(I)$ if
\beq
(p^{\lambda}_{n},p^{\lambda}_{m})_{\omega}=\int^{1}_{0}p^{\lambda}_{n}(x)p^{\lambda}_{m}(x)\omega(x)dx=\gamma_{n}\delta_{m,n},
\eeq
where $\gamma_{n}=\|p_{n}^{\lambda}\|^{2}_{{\color{black}0},\omega}{\color{black}:=(p^{\lambda}_{n},p^{\lambda}_{n})_{\omega}}$, and $\delta_{m,n}$ is the Kronecker delta.

We define the space
\beq
P^{\lambda}_{n}(I):=span\big\{p^{\lambda}_{0},p^{\lambda}_{1},\cdots,p^{\lambda}_{n}\big\}.
\eeq

The following two lemmas can be easily proved by following the standard way to prove the existence of
the classical orthogonal polynomials;
see, e.g., \cite[p48-50]{STW10}.

\begin{lemma}\label{plem3}
$p^{\lambda}_{n+1}(x)$ is $\omega(x)$-weighted orthogonal to
any $\lambda-$polynomial $q(x)\in P^{\lambda}_{n}(I)$.
\end{lemma}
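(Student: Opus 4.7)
The plan is to reduce the statement to the defining orthogonality relation (2.3) by expanding an arbitrary element of $P^{\lambda}_{n}(I)$ in the basis $\{p^{\lambda}_{0},\ldots,p^{\lambda}_{n}\}$. The main preliminary point is therefore to verify that these $n+1$ functions do form a basis of $P^{\lambda}_{n}(I)$; after that, the conclusion follows by bilinearity of the inner product.

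First I would check linear independence of $\{p^{\lambda}_{0},p^{\lambda}_{1},\ldots,p^{\lambda}_{n}\}$. Suppose $\sum_{k=0}^{n} c_{k}p^{\lambda}_{k}=0$ with some $c_{k}\neq 0$, and let $m$ be the largest index with $c_{m}\neq 0$. Since $\deg(p^{\lambda}_{k})=k$, the coefficient of $x^{m\lambda}$ in the sum is $c_{m}$ times the leading coefficient of $p^{\lambda}_{m}$, which is nonzero by definition of the degree. This contradicts the vanishing of the sum, so the family is linearly independent. Because $P^{\lambda}_{n}(I)$ has dimension $n+1$, spanned by $\{1,x^{\lambda},\ldots,x^{n\lambda}\}$, the orthogonal family is indeed a basis; this is the content of definition (2.4).

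Next, given any $q\in P^{\lambda}_{n}(I)$, write $q(x)=\sum_{k=0}^{n} c_{k}p^{\lambda}_{k}(x)$ for uniquely determined real constants $c_{k}$. Using the bilinearity of $(\cdot,\cdot)_{\omega}$ together with the orthogonality relation (2.3),
\begin{equation*}
(p^{\lambda}_{n+1},q)_{\omega}=\sum_{k=0}^{n} c_{k}\,(p^{\lambda}_{n+1},p^{\lambda}_{k})_{\omega}=\sum_{k=0}^{n} c_{k}\gamma_{n+1}\delta_{n+1,k}=0,
\end{equation*}
since $k\leq n<n+1$ makes every Kronecker delta vanish. This yields the desired orthogonality.

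There is no real obstacle here: the only subtle point is the basis property, which relies crucially on the fact that $\deg(p^{\lambda}_{k})=k$ (this is part of the hypothesis on the orthogonal sequence stated before (2.3)). Everything else is a one-line bilinear expansion.
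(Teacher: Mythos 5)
Your proof is correct and follows exactly the standard expansion-and-bilinearity argument that the paper itself invokes (it gives no explicit proof, only a pointer to the classical orthogonal-polynomial argument in Shen--Tang--Wang): expand $q\in P^{\lambda}_{n}(I)$ in $\{p^{\lambda}_{0},\ldots,p^{\lambda}_{n}\}$ and use the orthogonality relation termwise. The linear-independence check is a harmless extra since the paper defines $P^{\lambda}_{n}(I)$ as the span of $\{p^{\lambda}_{0},\ldots,p^{\lambda}_{n}\}$, so the expansion is available by definition; your degree argument is only needed if one wants uniqueness of the coefficients or the identification with $\mathrm{span}\{1,x^{\lambda},\ldots,x^{n\lambda}\}$.
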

\begin{lemma}\label{plem1}
For any given positive weight function $\omega\in L^{1}(I)$, there exists a unique sequence of monic orthogonal $\lambda-$polynomials $\big\{\bar{p}^{\lambda}_{n}\big\}_{n=0}^{\infty}$ with $\deg(\bar{p}^{\lambda}_{n})=n$.
This unique sequence can be obtained through the following recurrence relation
\beq
\bar{p}^{\lambda}_{0}=1,\ \ \ \bar{p}^{\lambda}_{1}=x^{\lambda}-\alpha_{0},\ \ \ \bar{p}^{\lambda}_{n+1}=(x^{\lambda}-\alpha_{n})\bar{p}^{\lambda}_{n}-\beta_{n}\bar{p}^{\lambda}_{n-1},\ \ n\geq1,
\eeq
where
\beq
\alpha_{n}=\dps\frac{(x^{\lambda}\bar{p}^{\lambda}_{n},\bar{p}^{\lambda}_{n})_{\omega}}{\|\bar{p}^{\lambda}_{n}\|^{2}_{{\color{black}0,}\omega}},\ \ n\geq0;\ \ \ \ \beta_{n}=\dps\frac{\|\bar{p}_{n}\|^{2}_{{\color{black}0,}\omega}}{\|\bar{p}^{\lambda}_{n-1}\|^{2}_{{\color{black}0,}\omega}},\ \ n\geq1.
\eeq
\end{lemma}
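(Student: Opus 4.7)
The plan is to mimic the classical existence/uniqueness argument for orthogonal polynomials, using $x^\lambda$ in place of the ordinary variable and invoking Lemma \ref{plem3} as the key orthogonality tool. I would organize the proof in three steps: existence by Gram–Schmidt, uniqueness, and derivation of the three-term recurrence.

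For \emph{existence}, I would apply Gram–Schmidt orthogonalization with respect to $(\cdot,\cdot)_\omega$ to the family $\{1, x^\lambda, x^{2\lambda}, \ldots\}$. Since $\omega \in L^1(I)$ is positive, $(\cdot,\cdot)_\omega$ is a genuine inner product on $P^\lambda_n(I)$, and linear independence of the $\lambda$-monomials $x^{k\lambda}$ (verifiable, e.g., through the substitution $t = x^\lambda$ which turns this system into the standard monomial basis on $[0,1]$) guarantees the Gram–Schmidt process yields a nonzero element at each step. Choosing the coefficient of $x^{n\lambda}$ to be $1$ produces the desired monic sequence $\{\bar p^\lambda_n\}$.

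For \emph{uniqueness}, let $\{\tilde p^\lambda_n\}$ be a second such sequence. Then $\tilde p^\lambda_n - \bar p^\lambda_n \in P^\lambda_{n-1}(I)$ since both are monic of degree $n$, while by Lemma \ref{plem3} both $\tilde p^\lambda_n$ and $\bar p^\lambda_n$ are $\omega$-orthogonal to all of $P^\lambda_{n-1}(I)$. Hence the difference is $\omega$-orthogonal to itself, forcing $\|\tilde p^\lambda_n - \bar p^\lambda_n\|^2_{0,\omega}=0$ and therefore $\tilde p^\lambda_n = \bar p^\lambda_n$.

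For the \emph{three-term recurrence}, I would use that $x^\lambda \bar p^\lambda_n$ is a monic $\lambda$-polynomial of degree $n+1$, so it admits the unique orthogonal expansion
\[
x^\lambda \bar p^\lambda_n = \bar p^\lambda_{n+1} + \sum_{k=0}^{n} c_k\, \bar p^\lambda_k, \qquad c_k = \frac{(x^\lambda \bar p^\lambda_n,\bar p^\lambda_k)_\omega}{\|\bar p^\lambda_k\|^2_{0,\omega}}.
\]
For $k \le n-2$, the product $x^\lambda \bar p^\lambda_k$ lies in $P^\lambda_{n-1}(I)$, and the symmetry $(x^\lambda \bar p^\lambda_n,\bar p^\lambda_k)_\omega = (\bar p^\lambda_n, x^\lambda \bar p^\lambda_k)_\omega$ combined with Lemma \ref{plem3} kills $c_k$. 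The coefficient $c_n$ matches $\alpha_n$ by definition, while for $c_{n-1}$ one writes $x^\lambda \bar p^\lambda_{n-1} = \bar p^\lambda_n + r$ with $r \in P^\lambda_{n-1}(I)$, so that $(\bar p^\lambda_n, x^\lambda \bar p^\lambda_{n-1})_\omega = \|\bar p^\lambda_n\|^2_{0,\omega}$ and hence $c_{n-1} = \|\bar p^\lambda_n\|^2_{0,\omega}/\|\bar p^\lambda_{n-1}\|^2_{0,\omega} = \beta_n$. Rearranging yields the stated recurrence. I do not anticipate any serious obstacle; the only mild point to be careful about is the linear independence of the $\lambda$-monomials in $L^2_\omega(I)$, which follows at once from the classical case via the change of variable $t = x^\lambda$.
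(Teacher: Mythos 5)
Your proposal is correct and follows exactly the route the paper intends: the paper gives no proof of its own but refers to the standard construction of orthogonal polynomials (Gram--Schmidt existence, uniqueness via orthogonality of the monic difference, and the three-term recurrence obtained by expanding $x^{\lambda}\bar{p}^{\lambda}_{n}$ and using symmetry of the inner product), which is precisely what you carried out, with the change of variable $t=x^{\lambda}$ handling the linear independence of the $\lambda$-monomials. No gaps to report.
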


Now we turn to define the fractional Jacobi polynomials.
\begin{definition}\label{Jabl}
The fractional Jacobi polynomials of degree $n$ are defined by
\begin{equation}\label{eq2}
J^{\alpha,\beta,\lambda}_{n}(x)=J^{\alpha,\beta}_{n}(2x^{\lambda}-1), \ \forall x \in I,
\end{equation}
where $J^{\alpha,\beta}_{n}(x)$ denotes the
Jacobi polynomial of degree n, and $\alpha, \beta>-1, 0<\lambda\leq1$.
\end{definition}
When $\lambda=1,$ the polynomials $\{J^{\alpha,\beta,1}_{n}(x)\}_{n=0}^{\infty}$ are called shifted Jacobi polynomials up to a constant, which are orthogonal polynomials with the weight $(1-x)^{\alpha}x^{\beta}.$

It has been well known that the classical Jacobi polynomial $J^{\alpha,\beta}_{n}(x)$ has the following representation
\bex
J^{\alpha,\beta}_{n}(x)=\dps\frac{\Gamma(n+\alpha+1)}{n!\Gamma(n+\alpha+\beta+1)}\sum_{k=0}^{n}\binom n k
\frac{\Gamma(n+k+\alpha+\beta+1)}{\Gamma(k+\alpha+1)}\big(\frac{x-1}{2}\big)^{k}.
\eex
Consequently, we have
\bq\label{eqx5}
J^{\alpha,\beta,\lambda}_{n}(x)=\dps\frac{\Gamma(n+\alpha+1)}{n!\Gamma(n+\alpha+\beta+1)}\sum_{k=0}^{n}\binom n k\frac{\Gamma(n+k+\alpha+\beta+1)}{\Gamma(k+\alpha+1)}(x^{\lambda}-1)^{k}.
\eq

In order to inherit important properties from the classical Jacobi polynomials, we modify the definition of
the derivatives as follows.
\begin{definition}\label{def1}
The first order new derivative is defined by
\bex
D^{1}_{\lambda}v(x)=\dps\frac{d}{dx^{\lambda}}v(x)=\frac{x^{1-\lambda}}{\lambda}v'(x),
\eex
and the new defined derivative of order $k, k\ge 1$ is denoted by
\bex
D^{k}_{\lambda}v(x)=\overbrace{D^{1}_{\lambda}\cdot D^{1}_{\lambda}\cdots D^{1}_{\lambda}}^{k}v(x),
\eex
where $0<\lambda\leq1$ and $v(x)$ is defined on $\mathbb{R}^{+}$.
\end{definition}
Furthermore, the left-side (right-side) limit definitions of the new derivatives are denoted by
\bearr
^{+}D^{1}_{\lambda}v(x)&:=\dps\lim_{\Delta x\rightarrow 0^{+}}\frac{v(x+\Delta x)-v(x)}{(x+\Delta x)^{\lambda}-x^{\lambda}},\\[9pt]
^{-}D^{1}_{\lambda}v(x)&:=\dps\lim_{\Delta x\rightarrow 0^{-}}\frac{v(x+\Delta x)-v(x)}{(x+\Delta x)^{\lambda}-x^{\lambda}}.\\
\eearr
$D^{1}_{\lambda}v(x)$ exists if and only if $^{+}D^{1}_{\lambda}v(x)={}^{-}D^{1}_{\lambda}v(x)$, and
$D^{1}_{\lambda}v(x)={}^{+}D^{1}_{\lambda}v(x)={}^{-}D^{1}_{\lambda}v(x).$
Obviously when $\lambda=1$, the new defined derivatives become the classical ones.

We set two weight functions as follows:
\be\label{oabl}
\omega^{\alpha,\beta,\lambda}(x):=\lambda(1-x^{\lambda})^{\alpha}x^{(\beta+1)\lambda-1}.
\ee
\be\label{oab2}
\widehat{\omega}^{\alpha,\beta,\lambda}(x):=(1-x^{\lambda})^{\alpha}x^{\beta\lambda}=\lambda^{-1} x^{1-\lambda}\omega^{\alpha,\beta,\lambda}(x).
\ee
\begin{lemma}[see \cite{Hou17,Hou18}]\label{lem8}
The fractional Jacobi polynomials $J^{\alpha,\beta,\lambda}_{n}(x)$ are orthogonal with respect to the weight function
$\omega^{\alpha,\beta,\lambda}(x)$,
$\alpha, \beta>-1,0<\lambda\leq1$, i.e.,
\begin{equation}\label{eq12}
\int_{0}^{1}\omega^{\alpha,\beta,\lambda}(x)J^{\alpha,\beta,\lambda}_{n}(x)J^{\alpha,\beta,\lambda}_{m}(x)dx=\widehat{\gamma}_{n}^{\alpha,\beta}\delta_{m,n},
\end{equation}
where \begin{equation*}
\widehat{\gamma}_{n}^{\alpha,\beta}=\frac{\Gamma(n+\alpha+1)\Gamma(n+\beta+1)}{(2n+\alpha+\beta+1)n!\Gamma(n+\alpha+\beta+1)}.
\end{equation*}
\end{lemma}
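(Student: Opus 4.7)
The plan is to reduce the claim to the classical orthogonality relation for the Jacobi polynomials $J_n^{\alpha,\beta}(t)$ on $[-1,1]$ via the substitution that defines $J_n^{\alpha,\beta,\lambda}$ in the first place, namely $t = 2x^{\lambda} - 1$. Since $J_n^{\alpha,\beta,\lambda}(x) = J_n^{\alpha,\beta}(2x^{\lambda}-1)$ by Definition~\ref{Jabl}, the substitution immediately converts the products $J_n^{\alpha,\beta,\lambda}(x)J_m^{\alpha,\beta,\lambda}(x)$ into $J_n^{\alpha,\beta}(t)J_m^{\alpha,\beta}(t)$, and maps the interval $[0,1]$ onto $[-1,1]$; the only work left is to check that $\omega^{\alpha,\beta,\lambda}(x)\,dx$ becomes, up to a constant, the standard Jacobi weight $(1-t)^{\alpha}(1+t)^{\beta}\,dt$.

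For the weight computation, I would write $x^{\lambda} = (1+t)/2$ and $1-x^{\lambda} = (1-t)/2$, and then note that $dt = 2\lambda x^{\lambda-1}dx$, so $\lambda x^{\lambda-1}dx = \tfrac{1}{2}dt$. Splitting the exponent as $x^{(\beta+1)\lambda-1} = x^{\beta\lambda}\cdot x^{\lambda-1}$ lets me pull a $\lambda x^{\lambda-1}\,dx$ factor out of $\omega^{\alpha,\beta,\lambda}(x)\,dx$ and replace it with $\tfrac{1}{2}dt$, leaving $x^{\beta\lambda} = ((1+t)/2)^{\beta}$. After collecting, one obtains
\begin{equation*}
\omega^{\alpha,\beta,\lambda}(x)\,dx \;=\; \frac{1}{2^{\alpha+\beta+1}}(1-t)^{\alpha}(1+t)^{\beta}\,dt.
\end{equation*}

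Plugging this into the integral I want to evaluate gives
\begin{equation*}
\int_{0}^{1}\omega^{\alpha,\beta,\lambda}(x)J_n^{\alpha,\beta,\lambda}(x)J_m^{\alpha,\beta,\lambda}(x)\,dx
\;=\; \frac{1}{2^{\alpha+\beta+1}}\int_{-1}^{1}(1-t)^{\alpha}(1+t)^{\beta}J_n^{\alpha,\beta}(t)J_m^{\alpha,\beta}(t)\,dt,
\end{equation*}
and the right-hand side equals $\widehat{\gamma}_n^{\alpha,\beta}\delta_{m,n}$ by invoking the classical Jacobi orthogonality relation, whose normalization constant $2^{\alpha+\beta+1}\Gamma(n+\alpha+1)\Gamma(n+\beta+1)/[(2n+\alpha+\beta+1)n!\Gamma(n+\alpha+\beta+1)]$ cancels the $2^{\alpha+\beta+1}$ from the weight change and produces exactly the constant $\widehat{\gamma}_n^{\alpha,\beta}$ stated in the lemma.

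There is essentially no obstacle here: the argument is a one-line substitution once the weight $\omega^{\alpha,\beta,\lambda}$ is recognized as the push-forward of the classical Jacobi weight under $x\mapsto(1+t)/2 = x^{\lambda}$, and the factor $\lambda x^{(\beta+1)\lambda-1}$ is precisely what the Jacobian of this substitution supplies. The only point that deserves care is bookkeeping of the exponents of $x$ to make sure all $\lambda$-dependent factors cancel correctly, which is why I split $x^{(\beta+1)\lambda-1}$ as $x^{\beta\lambda}\cdot x^{\lambda-1}$ at the start.
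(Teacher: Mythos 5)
Your proof is correct: the change of variables $t=2x^{\lambda}-1$ turns $\omega^{\alpha,\beta,\lambda}(x)\,dx$ into $2^{-(\alpha+\beta+1)}(1-t)^{\alpha}(1+t)^{\beta}\,dt$ exactly as you compute, and the classical Jacobi orthogonality constant then yields $\widehat{\gamma}_{n}^{\alpha,\beta}$ precisely. The paper itself gives no proof (it cites \cite{Hou17,Hou18}), and your substitution argument is the standard one underlying that cited result, so there is nothing to add.
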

The special case $\alpha=0,\beta=\frac{1}{\lambda}-1$ yields the  M\"{u}ntz Legendre polynomials, which have been investigated in a different way in \cite{MSS93,BEZ94}.

Next, we shall show that the fractional Jacobi polynomials are the eigenfunctions of a singular Sturm-Liouville operator $\mathcal{L}^{\alpha,\beta}_{\lambda}$ defined by
\bearr
\mathcal{L}^{\alpha,\beta}_{\lambda}v(x)&= -(\widehat{\omega}^{\alpha,\beta,\lambda}(x))^{-1}D^{1}_{\lambda}\big\{(1-x^{\lambda})^{\alpha+1}x^{(\beta+1)\lambda}D^{1}_{\lambda}v(x)\big\}.
\eearr
\begin{lemma}[see \cite{Hou17,Hou18}]\label{lemma4}
The fractional Jacobi polynomials $\big\{J^{\alpha,\beta,\lambda}_{n}\big\}_{n=0}^{\infty}$
satisfy the following singular Sturm-Liouville problem:
\begin{equation}\label{Eq5}
 \mathcal{L}^{\alpha,\beta}_{\lambda}J^{\alpha,\beta,\lambda}_{n}(x)=\sigma^{\alpha,\beta}_{n}J^{\alpha,\beta,\lambda}_{n}(x).
\end{equation}
That is
\begin{equation}\label{eq3}
 -(\omega^{\alpha,\beta,\lambda}(x))^{-1}\frac{d}{dx}\Big\{\lambda^{-1}(1-x^{\lambda})^{\alpha+1}x^{\beta\lambda+1}\frac{d}{dx}J^{\alpha,\beta,\lambda}_{n}(x)\Big\}=\sigma^{\alpha,\beta}_{n}J^{\alpha,\beta,\lambda}_{n}(x),
\end{equation}
where $\sigma^{\alpha,\beta}_{n}=n(n+\alpha+\beta+1).$
\end{lemma}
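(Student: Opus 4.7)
The plan is to reduce the claim to the classical shifted Jacobi Sturm--Liouville equation via the variable change $t=x^{\lambda}$. By Definition \ref{Jabl}, $J^{\alpha,\beta,\lambda}_{n}(x)=\widetilde J_n(x^{\lambda})$, where $\widetilde J_n(t):=J^{\alpha,\beta}_n(2t-1)$ denotes the shifted Jacobi polynomial on $[0,1]$. By Definition \ref{def1}, the operator $D^{1}_{\lambda}=\tfrac{d}{dx^{\lambda}}$ is precisely the derivative with respect to the composite variable $t=x^{\lambda}$, so when $v(x)=\widetilde v(x^{\lambda})$ one has $D^{1}_{\lambda}v(x)=\widetilde v\,'(x^{\lambda})$. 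This is the mechanism that will let every computation in the $t$-variable translate directly into the corresponding $D^{1}_{\lambda}$-computation in $x$.

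Next, I would invoke the classical Jacobi Sturm--Liouville identity on $[-1,1]$ and, via $y=2t-1$, cast it in its shifted form on $[0,1]$:
\begin{equation*}
-\frac{d}{dt}\Big\{(1-t)^{\alpha+1}t^{\beta+1}\frac{d}{dt}\widetilde J_{n}(t)\Big\}=\sigma^{\alpha,\beta}_{n}(1-t)^{\alpha}t^{\beta}\,\widetilde J_{n}(t),\qquad \sigma^{\alpha,\beta}_{n}=n(n+\alpha+\beta+1).
\end{equation*}
Substituting $t=x^{\lambda}$ and replacing each $d/dt$ by $D^{1}_{\lambda}$, the inner weight becomes $(1-x^{\lambda})^{\alpha+1}x^{(\beta+1)\lambda}$ and the outer weight becomes $(1-x^{\lambda})^{\alpha}x^{\beta\lambda}=\widehat{\omega}^{\alpha,\beta,\lambda}(x)$. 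Dividing through by $\widehat{\omega}^{\alpha,\beta,\lambda}(x)$ yields precisely $\mathcal{L}^{\alpha,\beta}_{\lambda}J^{\alpha,\beta,\lambda}_{n}(x)=\sigma^{\alpha,\beta}_{n}J^{\alpha,\beta,\lambda}_{n}(x)$, which is \eqref{Eq5}.

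To pass from \eqref{Eq5} to the classical-derivative form \eqref{eq3}, I would use the relation $D^{1}_{\lambda}=\tfrac{x^{1-\lambda}}{\lambda}\tfrac{d}{dx}$ from Definition \ref{def1} together with the weight identity $\omega^{\alpha,\beta,\lambda}(x)=\lambda x^{\lambda-1}\widehat{\omega}^{\alpha,\beta,\lambda}(x)$ coming from \eqref{oabl}--\eqref{oab2}. A direct substitution converts the inner $D^{1}_{\lambda}$ acting on $J^{\alpha,\beta,\lambda}_{n}$ into $\tfrac{x^{1-\lambda}}{\lambda}(J^{\alpha,\beta,\lambda}_{n})'(x)$, absorbs the factor $x^{1-\lambda}$ into the monomial $x^{(\beta+1)\lambda}$ to produce $x^{\beta\lambda+1}$, and the outer $D^{1}_{\lambda}$ contributes another factor $\tfrac{x^{1-\lambda}}{\lambda}$; combining this factor with $(\widehat{\omega}^{\alpha,\beta,\lambda})^{-1}$ reproduces $\lambda^{-1}(\omega^{\alpha,\beta,\lambda})^{-1}$ in front, and the $\lambda^{-1}$ inside can be placed next to the inner weight to match the form displayed in \eqref{eq3}.

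There is no genuine obstacle here; the proof is essentially bookkeeping with the chain rule and a change of variable. The only delicate point is to keep the powers of $\lambda$ and $x^{1-\lambda}$ aligned correctly between the two forms \eqref{Eq5} and \eqref{eq3}, and to note that the transition $t\mapsto x^{\lambda}$ maps $[0,1]$ bijectively onto $[0,1]$ under the assumption $0<\lambda\le 1$, so that the boundary structure (and hence the singular Sturm--Liouville nature) of the operator is preserved.
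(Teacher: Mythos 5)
Your proof is correct, and it takes a genuinely different route from the one behind the paper's statement. You pull the result back from the classical shifted Jacobi Sturm--Liouville equation via the substitution $t=x^{\lambda}$, using the key observation that $D^{1}_{\lambda}$ acting on $v(x)=\widetilde v(x^{\lambda})$ is exactly $\widetilde v\,'(x^{\lambda})$; this is efficient because the eigenvalue $\sigma^{\alpha,\beta}_{n}=n(n+\alpha+\beta+1)$ and the whole identity come for free from the classical theory, and the equivalence of \eqref{Eq5} and \eqref{eq3} is then pure bookkeeping. The argument the paper relies on (it cites \cite{Hou17,Hou18}; an omitted proof along these lines appears in the source) is instead intrinsic to the $\lambda$-polynomial framework: one checks that $\mathcal{L}^{\alpha,\beta}_{\lambda}$ maps $P^{\lambda}_{n}(I)$ into itself, shows via integration by parts (self-adjointness with respect to $\omega^{\alpha,\beta,\lambda}$) that $\mathcal{L}^{\alpha,\beta}_{\lambda}J^{\alpha,\beta,\lambda}_{n}$ is orthogonal to $P^{\lambda}_{n-1}(I)$, invokes the uniqueness of the orthogonal $\lambda$-polynomial family (Lemma \ref{plem1}) to conclude proportionality to $J^{\alpha,\beta,\lambda}_{n}$, and identifies $\sigma^{\alpha,\beta}_{n}$ by comparing the coefficients of the leading term $x^{n\lambda}$. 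Your route is shorter and more transparent; the paper's route is self-contained in the M\"untz setting and does not presuppose the classical ODE, which is why it mirrors the style used elsewhere in Section 2 (e.g.\ the proof of Lemma \ref{lem4}). One small bookkeeping correction in your final paragraph: the outer factor $\tfrac{x^{1-\lambda}}{\lambda}$ combined with $(\widehat{\omega}^{\alpha,\beta,\lambda})^{-1}=\lambda x^{\lambda-1}(\omega^{\alpha,\beta,\lambda})^{-1}$ gives exactly $(\omega^{\alpha,\beta,\lambda})^{-1}$ in front, not $\lambda^{-1}(\omega^{\alpha,\beta,\lambda})^{-1}$; the single explicit $\lambda^{-1}$ appearing in \eqref{eq3} is the one generated by the inner derivative, sitting next to the inner weight $(1-x^{\lambda})^{\alpha+1}x^{\beta\lambda+1}$. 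With that factor tracked correctly, your derivation matches \eqref{eq3} precisely.
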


\begin{lemma}\label{lem4}
The new defined k-th order derivatives of the fractional Jacobi polynomials are orthogonal with respect to
the weight $\omega^{\alpha+k,\beta+k,\lambda}(x)$, i.e.,
\begin{equation}\label{eq11}
\int_{0}^{1}\omega^{\alpha+k,\beta+k,\lambda}(x)D^{k}_{\lambda}J^{\alpha,\beta,\lambda}_{n}(x)D^{k}_{\lambda}J^{\alpha,\beta,\lambda}_{m}(x)dx=\widehat{h}_{n,k}^{\alpha,\beta}\delta_{m,n},
\end{equation}
where
\bq\label{peq1}
\widehat{h}^{\alpha,\beta}_{n,k}=\dps\frac{\Gamma(n+\alpha+1)\Gamma(n+\beta+1)\Gamma(n+k+\alpha+\beta+1)}{(2n+\alpha+\beta+1)(n-k)!\Gamma^{2}(n+\alpha+\beta+1)}.
\eq
Moreover, we have
\be\label{eq11b}
D^{k}_{\lambda}J^{\alpha,\beta,\lambda}_{n}(x)=\widehat{d}^{\alpha,\beta}_{n,k}J^{\alpha+k,\beta+k,\lambda}_{n-k}(x),
\ee
where
\bex 
\widehat{d}^{\alpha,\beta}_{n,k}=\dps\frac{\Gamma(n+k+\alpha+\beta+1)}{\Gamma(n+\alpha+\beta+1)}.
\eex
\end{lemma}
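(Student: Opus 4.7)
The natural route is to reduce everything to the classical Jacobi case via the substitution $y=x^{\lambda}$, then invoke Lemma \ref{lem8}. First I would observe that, by Definition \ref{def1}, $D^{1}_{\lambda}$ is exactly $\frac{d}{dy}$ with $y=x^{\lambda}$: for $v(x)=\widetilde{v}(x^{\lambda})$ one has $v'(x)=\lambda x^{\lambda-1}\widetilde{v}'(x^{\lambda})$, so $D^{1}_{\lambda}v(x)=\frac{x^{1-\lambda}}{\lambda}v'(x)=\widetilde{v}'(x^{\lambda})$. Iterating, $D^{k}_{\lambda}$ applied to a function of $x^{\lambda}$ is just $\frac{d^{k}}{dy^{k}}$ evaluated at $y=x^{\lambda}$.

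Applying this to $J^{\alpha,\beta,\lambda}_{n}(x)=J^{\alpha,\beta}_{n}(2x^{\lambda}-1)$ gives
\begin{equation*}
D^{k}_{\lambda}J^{\alpha,\beta,\lambda}_{n}(x)=\left.\frac{d^{k}}{dy^{k}}J^{\alpha,\beta}_{n}(2y-1)\right|_{y=x^{\lambda}}=2^{k}\,(J^{\alpha,\beta}_{n})^{(k)}(2x^{\lambda}-1).
\end{equation*}
The classical identity $(J^{\alpha,\beta}_{n})^{(k)}(z)=\frac{\Gamma(n+k+\alpha+\beta+1)}{2^{k}\Gamma(n+\alpha+\beta+1)}J^{\alpha+k,\beta+k}_{n-k}(z)$ then gives
\begin{equation*}
D^{k}_{\lambda}J^{\alpha,\beta,\lambda}_{n}(x)=\frac{\Gamma(n+k+\alpha+\beta+1)}{\Gamma(n+\alpha+\beta+1)}J^{\alpha+k,\beta+k}_{n-k}(2x^{\lambda}-1)=\widehat{d}^{\alpha,\beta}_{n,k}J^{\alpha+k,\beta+k,\lambda}_{n-k}(x),
\end{equation*}
which is exactly \eqref{eq11b}.

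With \eqref{eq11b} in hand, the orthogonality \eqref{eq11} is immediate: applying Lemma \ref{lem8} at parameters $(\alpha+k,\beta+k)$ to the right-hand side,
\begin{equation*}
\int_{0}^{1}\omega^{\alpha+k,\beta+k,\lambda}D^{k}_{\lambda}J^{\alpha,\beta,\lambda}_{n}D^{k}_{\lambda}J^{\alpha,\beta,\lambda}_{m}\,dx=(\widehat{d}^{\alpha,\beta}_{n,k})^{2}\,\widehat{\gamma}^{\alpha+k,\beta+k}_{n-k}\,\delta_{m,n}.
\end{equation*}
Finally, a short gamma-function computation verifies that this constant equals $\widehat{h}^{\alpha,\beta}_{n,k}$ as given in \eqref{peq1}: the factor $\widehat{\gamma}^{\alpha+k,\beta+k}_{n-k}$ simplifies (since $(n-k)+(\alpha+k)+1=n+\alpha+1$, etc.) to $\frac{\Gamma(n+\alpha+1)\Gamma(n+\beta+1)}{(2n+\alpha+\beta+1)(n-k)!\,\Gamma(n+k+\alpha+\beta+1)}$, which, when multiplied by $(\widehat{d}^{\alpha,\beta}_{n,k})^{2}=\Gamma^{2}(n+k+\alpha+\beta+1)/\Gamma^{2}(n+\alpha+\beta+1)$, produces exactly \eqref{peq1}. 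There is no real obstacle here — the only thing to watch is the bookkeeping of the gamma arguments and keeping track of the factor $2^{k}$ that cancels between the $D^{k}_{\lambda}$ formula and the classical derivative identity.
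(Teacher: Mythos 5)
Your proposal is correct, and the gamma-function bookkeeping checks out: the chain rule indeed gives $D^{1}_{\lambda}\{\widetilde v(x^{\lambda})\}=\widetilde v'(x^{\lambda})$, so $D^{k}_{\lambda}J^{\alpha,\beta,\lambda}_{n}(x)=2^{k}(J^{\alpha,\beta}_{n})^{(k)}(2x^{\lambda}-1)$, and the classical derivative identity plus Lemma \ref{lem8} at parameters $(\alpha+k,\beta+k)$ yields \eqref{eq11b}, \eqref{eq11}, and \eqref{peq1} exactly as you computed. However, your route is genuinely different from the paper's. You reduce everything to the classical Jacobi case through the substitution $y=x^{\lambda}$ and import the known formula $(J^{\alpha,\beta}_{n})^{(k)}(z)=\frac{\Gamma(n+k+\alpha+\beta+1)}{2^{k}\Gamma(n+\alpha+\beta+1)}J^{\alpha+k,\beta+k}_{n-k}(z)$, whereas the paper stays inside its $\lambda$-polynomial framework: for $k=1$ it proves the orthogonality of $\{D^{1}_{\lambda}J^{\alpha,\beta,\lambda}_{n}\}$ with respect to $\omega^{\alpha+1,\beta+1,\lambda}$ by integration by parts together with the fractional Sturm--Liouville relation of Lemma \ref{lemma4} (obtaining $(D^{1}_{\lambda}J^{\alpha,\beta,\lambda}_{n},D^{1}_{\lambda}J^{\alpha,\beta,\lambda}_{m})_{\omega^{\alpha+1,\beta+1,\lambda}}=\sigma^{\alpha,\beta}_{n}\widehat\gamma^{\alpha,\beta}_{n}\delta_{m,n}$), then invokes the uniqueness of orthogonal $\lambda$-polynomials (Lemma \ref{plem1}) to conclude proportionality to $J^{\alpha+1,\beta+1,\lambda}_{n-1}$, fixes the constant $\widehat d^{\alpha,\beta}_{n,1}=n+\alpha+\beta+1$ by comparing leading coefficients via \eqref{eqx5}, and iterates to general $k$ before computing $\widehat h^{\alpha,\beta}_{n,k}$ from Lemma \ref{lem8}. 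Your argument is shorter and rests on a standard fact about Jacobi polynomials (available, e.g., in the reference the paper already cites for spectral methods), while the paper's argument is self-contained in the fractional setting, does not presuppose the classical derivative identity, and produces along the way the eigenvalue relation that is reused later (in the proof of the inverse inequality \eqref{equ5}); both establish the lemma completely.
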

\begin{proof}
We start with the case $k=1$.
Using integration by parts, Lemma \ref{lemma4},
and the orthogonality of $\big\{J^{\alpha,\beta,\lambda}_{n}\big\}_{n=0}^{\infty}$, we obtain
\bq\label{peq8}
\dps\int_{0}^{1}\omega^{\alpha+1,\beta+1,\lambda}(x)D^{1}_{\lambda}J^{\alpha,\beta,\lambda}_{n}(x)D^{1}_{\lambda}J^{\alpha,\beta,\lambda}_{m}(x)dx=(J^{\alpha,\beta,\lambda}_{n},\mathcal{L}^{\alpha,\beta}_{\lambda}J^{\alpha,\beta,\lambda}_{m})_{\omega^{\alpha,\beta,\lambda}}=\sigma^{\alpha,\beta}_{n}\widehat{\gamma}^{\alpha,\beta}_{n}\delta_{m,n}.
\eq
This means that $\big\{D^{1}_{\lambda}J^{\alpha,\beta,\lambda}_{n}\big\}_{n=1}^{\infty}$
are orthogonal with respect to the weight $\omega^{\alpha+1,\beta+1,\lambda}$.
Furthermore it follows from Lemma \ref{plem1} that $D^{1}_{\lambda}J^{\alpha,\beta,\lambda}_{n}$
must be proportional to $J^{\alpha+1,\beta+1,\lambda}_{n-1},$ namely,
\bq\label{pequa4}
D^{1}_{\lambda}J^{\alpha,\beta,\lambda}_{n}(x)=\widehat{d}^{\alpha,\beta}_{n,1}J^{\alpha+1,\beta+1,\lambda}_{n-1}(x),
\eq
where $\widehat{d}^{\alpha,\beta}_{n,1}$ is a constant.\\
In virtue of \eqref{eqx5}, the leading coefficient of $J^{\alpha,\beta,\lambda}_{n}$
is $\dps k^{\alpha,\beta}_{n}=\frac{\Gamma(2n+\alpha+\beta+1)}{n!\Gamma(n+\alpha+\beta+1)}.$
Thus comparing the leading coefficients on both sides of \eqref{pequa4} gives:
\beq
\widehat{d}^{\alpha,\beta}_{n,1}=\dps\frac{nk^{\alpha,\beta}_{n}}{k^{\alpha+1,\beta+1}_{n-1}}=n+\alpha+\beta+1.
\eeq
This proves \eqref{eq11b} for $k=1$.
Applying \eqref{pequa4} recursively, we obtain
\bex
D^{k}_{\lambda}J^{\alpha,\beta,\lambda}_{n}=\widehat{d}^{\alpha,\beta}_{n,k}J^{\alpha+k,\beta+k,\lambda}_{n-k}(x),
\eex
where
\bex
\widehat{d}^{\alpha,\beta}_{n,k}=\dps\frac{\Gamma(n+k+\alpha+\beta+1)}{\Gamma(n+\alpha+\beta+1)}.
\eex
This proves \eqref{eq11b} for all $k\ge 1$. Finally,
using Lemma \ref{lem8} gives
\bex
\int_{0}^{1}\omega^{\alpha+k,\beta+k,\lambda}(x)D^{k}_{\lambda}J^{\alpha,\beta,\lambda}_{n}(x)D^{k}_{\lambda}J^{\alpha,\beta,\lambda}_{m}(x)dx=\widehat{h}_{n,k}^{\alpha,\beta}\delta_{m,n},
\eex
where
\bearr
\widehat{h}^{\alpha,\beta}_{n,k}=(\widehat{d}^{\alpha,\beta}_{n,k})^{2}\widehat{\gamma}^{\alpha+k,\beta+k}_{n-k}
=\dps\frac{\Gamma(n+\alpha+1)\Gamma(n+\beta+1)\Gamma(n+k+\alpha+\beta+1)}{(2n+\alpha+\beta+1)(n-k)!\Gamma^{2}(n+\alpha+\beta+1)}, \ \ n\geq k.
\eearr
The proof is completed.
\end{proof}

\subsection{$L^{2}_{\omega^{\alpha,\beta,\lambda}}(I)$-orthogonal projector with $\alpha,\beta>-1$}

Let  $\pi_{N,\omega^{\alpha,\beta,\lambda}}: L^{2}_{\omega^{\alpha,\beta,\lambda}}(I)\rightarrow P^{\lambda}_{N}(I)$
be the $L^{2}_{\omega^{\alpha,\beta,\lambda}}$-orthogonal projection operator defined by:
for all $v\in L^{2}_{\omega^{\alpha,\beta,\lambda}}(I)$, $\pi_{N,\omega^{\alpha,\beta,\lambda}}v\in P^{\lambda}_{N}(I)$ such that
\bex
(v-\pi_{N,\omega^{\alpha,\beta,\lambda}}v,v_{N})_{\omega^{\alpha,\beta,\lambda}}=0,~~\forall v_{N}\in P_{N}^{\lambda}(I).
\eex
Equivalently, $\pi_{N,\omega^{\alpha,\beta,\lambda}}$ can be characterized by:
\be\label{piN}
\pi_{N,\omega^{\alpha,\beta,\lambda}}v(x)=\sum_{n=0}^{N}\hat v^{\alpha,\beta}_{n}J^{\alpha,\beta,\lambda}_{n}(x),
\ee
where
$J^{\alpha,\beta,\lambda}_{n}(x)$ are the fractional Jacobi polynomials defined in \eqref{eq2}, and
\bex
\hat v^{\alpha,\beta}_{n}=\dfrac{(v,J^{\alpha,\beta,\lambda}_{n})_{\omega^{\alpha,\beta,\lambda}}}
{\|J^{\alpha,\beta,\lambda}_{n}\|^{2}_{0,\omega^{\alpha,\beta,\lambda}}}.
\eex
Immediately, we have
\bex
\|v-\pi_{N,\omega^{\alpha,\beta,\lambda}}v\|_{0,\omega^{\alpha,\beta,\lambda}}
=\inf_{v_{N}\in P^{\lambda}_{N}(I)}\|v-v_{N}\|_{0,\omega^{\alpha,\beta,\lambda}}.
\eex
To measure the truncation error, we introduce the non-uniformly fractional Jacobi-weighted Sobolev space:
\beq
B^{m,\lambda}_{\alpha,\beta}(I):=\big\{v: D^{k}_{\lambda}v\in L^{2}_{\omega^{\alpha+k,\beta+k,\lambda}}(I),0\leq k\leq m\big\},~~~~~m\in\mathbb{N},
\eeq
equipped with the inner product, norm, and semi-norm as follows:
\beq
(u,v)_{B^{m,\lambda}_{\alpha,\beta}}=\dps\sum_{k=0}^{m}(D^{k}_{\lambda}u,D^{k}_{\lambda}v)_{\omega^{\alpha+k,\beta+k,\lambda}},~~\|v\|_{B^{m,\lambda}_{\alpha,\beta}}=(v,v)^{1/2}_{B^{m,\lambda}_{\alpha,\beta}},~~~~|v|_{B^{m,\lambda}_{\alpha,\beta}}=\|D^{m}_{\lambda}v\|_{0,\omega^{\alpha+m,\beta+m,\lambda}}.
\eeq
The special case $\lambda=1$ gives the classical non-uniformly Jacobi-weighted Sobolev space:
\beq
B^{m,1}_{\alpha,\beta}(I):=\big\{v: \partial^{k}_{x}v\in L^{2}_{\omega^{\alpha+k,\beta+k,1}}(I),0\leq k\leq m\big\},~~~~~m\in\mathbb{N}
\eeq
The functions in $B^{m,\lambda}_{\alpha,\beta}(I)$ and the ones in $B^{m,1}_{\alpha,\beta}(I)$
are linked through the following lemma.
\begin{lemma}\label{plem2}
A function $v$ belongs to $B^{m,\lambda}_{\alpha,\beta}(I)$ if and only if
$v(x^{1/\lambda})$ belongs to $B^{m,1}_{\alpha,\beta}(I).$
\end{lemma}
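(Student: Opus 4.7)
The plan is to reduce everything to a change of variables $y=x^{\lambda}$ and to show that under this substitution the new $D^{1}_{\lambda}$-derivative is converted into an ordinary derivative, while the weighted $L^{2}$-norms on the two sides match exactly. Write $w(x):=v(x^{1/\lambda})$, so that $v(x)=w(x^{\lambda})$; the claim is $v\in B^{m,\lambda}_{\alpha,\beta}(I) \Leftrightarrow w\in B^{m,1}_{\alpha,\beta}(I)$.

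First I would verify the identity
\[
D^{k}_{\lambda}v(x)=w^{(k)}(x^{\lambda}),\qquad k\ge 0.
\]
For $k=1$ this follows directly from Definition \ref{def1} and the chain rule: $v'(x)=\lambda x^{\lambda-1}w'(x^{\lambda})$, hence $D^{1}_{\lambda}v(x)=\tfrac{x^{1-\lambda}}{\lambda}v'(x)=w'(x^{\lambda})$. Because $D^{1}_{\lambda}v(x)$ is again a function of $x^{\lambda}$ (namely $w'\circ(\cdot)^{\lambda}$), the argument iterates: applying $D^{1}_{\lambda}$ once more to $w'(x^{\lambda})$ gives $w''(x^{\lambda})$, and induction on $k$ yields the claim.

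Next I would compute the semi-norms. Using \eqref{oabl},
\[
\|D^{k}_{\lambda}v\|^{2}_{0,\omega^{\alpha+k,\beta+k,\lambda}}
=\int_{0}^{1}|w^{(k)}(x^{\lambda})|^{2}\lambda(1-x^{\lambda})^{\alpha+k}x^{(\beta+k+1)\lambda-1}\,dx.
\]
Substituting $y=x^{\lambda}$, so that $dy=\lambda x^{\lambda-1}dx$ and $x^{(\beta+k+1)\lambda-1}dx=\lambda^{-1}y^{\beta+k}\,dy$, the integral becomes
\[
\int_{0}^{1}|w^{(k)}(y)|^{2}(1-y)^{\alpha+k}y^{\beta+k}\,dy
=\|w^{(k)}\|^{2}_{0,\omega^{\alpha+k,\beta+k,1}},
\]
since $\omega^{\alpha+k,\beta+k,1}(y)=(1-y)^{\alpha+k}y^{\beta+k}$. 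Summing over $0\le k\le m$ gives the exact identity $\|v\|_{B^{m,\lambda}_{\alpha,\beta}}=\|w\|_{B^{m,1}_{\alpha,\beta}}$, which proves both implications of the lemma simultaneously.

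The only subtle points are ensuring that the change of variables is applicable (since $0<\lambda\le 1$, the map $x\mapsto x^{\lambda}$ is a $C^{1}$ bijection of $(0,1]$ with a possibly singular Jacobian at $0$, but the integrand cancels this singularity cleanly, so standard Lebesgue substitution applies), and that $D^{k}_{\lambda}v$ and $w^{(k)}$ are interpreted in the weak sense when needed; these are routine. Hence I do not anticipate any real obstacle — the main content of the lemma is simply that the weights $\omega^{\alpha+k,\beta+k,\lambda}$ were engineered so that the change of variables produces no leftover factor.
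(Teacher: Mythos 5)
Your proposal is correct and follows essentially the same route as the paper: the paper's proof is exactly the change of variables $t=x^{\lambda}$ showing $\|D^{k}_{\lambda}v\|_{0,\omega^{\alpha+k,\beta+k,\lambda}}=\|\partial^{k}_{x}\{v(x^{1/\lambda})\}\|_{0,\omega^{\alpha+k,\beta+k,1}}$ for $0\le k\le m$. You merely make explicit (via the induction giving $D^{k}_{\lambda}v(x)=w^{(k)}(x^{\lambda})$) an identity the paper uses tacitly, which is a fine addition but not a different argument.
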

\begin{proof}
Using the variable change $x=t^{1/\lambda}$, we have
\barr\label{peq6}
\|D^{k}_{\lambda}v\|^{2}_{0,\omega^{\alpha+k,\beta+k,\lambda}}&=\dps\int_{0}^{1}\lambda(1-x^{\lambda})^{\alpha+k}x^{(\beta+k+1)\lambda-1}\big(D^{k}_{\lambda}v\big)^{2}dx\\[9pt]
&=\dps\int_{0}^{1}(1-t)^{\alpha+k}t^{\beta+k}\big(\partial^{k}_{t}\big\{v(t^{1/\lambda})\big\}\big)^{2}dt\\[9pt]
&=\|\partial^{k}_{x}\big\{v(x^{1/\lambda})\big\}\|^{2}_{0,\omega^{\alpha+k,\beta+k,1}},~~~~0\leq k\leq m.
\earr
This proves the desired result.
\end{proof}
\begin{proposition}\label{th1}
The orthogonal projector $\pi_{N,\omega^{\alpha,\beta,\lambda}}$ admits
the following error estimate:
for any $ v(x^{\frac{1}{\lambda}})\in B^{m,1}_{\alpha,\beta}(I),$ and $0\leq l\leq m\leq N+1,$
\bq\label{peq4}
\|D^{l}_{\lambda}(v-\pi_{N,\omega^{\alpha,\beta,\lambda}}v)\|_{0,\omega^{\alpha+l,\beta+l,\lambda}}\leq\dps c\sqrt{\frac{(N-m+1)!}{(N-l+1)!}}N^{(l-m)/2}\|\partial^{m}_{x}\big\{v(x^{\frac{1}{\lambda}})\big\}\|_{0,\omega^{\alpha+m,\beta+m,1}}.
\eq
For a fixed $m$, the above estimate can be simplified as
\bq\label{peq5}
\|D^{l}_{\lambda}(v-\pi_{N,\omega^{\alpha,\beta,\lambda}}v)\|_{0,\omega^{\alpha+l,\beta+l,\lambda}}\leq\dps cN^{l-m}\|\partial^{m}_{x}\big\{v(x^{\frac{1}{\lambda}})\big\}\|_{0,\omega^{\alpha+m,\beta+m,1}},
\eq
where $c\approx 1$ for $N\gg1.$
 In particular, for $l=0, 1$, we have
 \bq\label{pequa5}
 \|v-\pi_{N,\omega^{\alpha,\beta,\lambda}}v\|_{0,\omega^{\alpha,\beta,\lambda}}\leq\dps cN^{-m}\|\partial^{m}_{x}\big\{v(x^{\frac{1}{\lambda}})\big\}\|_{0,\omega^{\alpha+m,\beta+m,1}},
 \eq
\begin{equation}\label{pequ1}
\|\partial_{x}(v-\pi_{N,\omega^{\alpha,\beta,\lambda}}v)\|_{0,\tilde{\omega}^{\alpha,\beta,\lambda}}\leq cN^{1-m}\|\partial_{x}^{m}\big\{v(x^{\frac{1}{\lambda}})\big\}\|_{0,\omega^{\alpha+m,\beta+m,1}},
\end{equation}
where $\tilde{\omega}^{\alpha,\beta,\lambda}=\lambda^{-1}(1-x^{\lambda})^{\alpha+1}x^{\beta\lambda+1}$.
\end{proposition}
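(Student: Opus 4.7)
\medskip
\noindent\textbf{Proof proposal.}
The plan is to reduce the estimate to a known result for the classical shifted Jacobi orthogonal projection on $[0,1]$ via the substitution $t=x^{\lambda}$. Set $\tilde v(t):=v(t^{1/\lambda})$, so that $v(x)=\tilde v(x^{\lambda})$. Because
$J^{\alpha,\beta,\lambda}_{n}(x)=J^{\alpha,\beta}_{n}(2x^{\lambda}-1)$, the space $P^{\lambda}_{N}(I)$ is exactly the pullback of the space of classical polynomials of degree $\le N$ under the map $x\mapsto x^{\lambda}$. My first step will be to verify that $\pi_{N,\omega^{\alpha,\beta,\lambda}}$ commutes with this change of variable, namely
\bex
\pi_{N,\omega^{\alpha,\beta,\lambda}}v(x)=\bigl(\tilde\pi^{\alpha,\beta}_{N}\tilde v\bigr)(x^{\lambda}),
\eex
where $\tilde\pi^{\alpha,\beta}_{N}$ is the $L^{2}_{\omega^{\alpha,\beta,1}}(I)$-orthogonal projector onto the span of shifted Jacobi polynomials. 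This follows by substituting $t=x^{\lambda}$ in the orthogonality condition defining $\pi_{N,\omega^{\alpha,\beta,\lambda}}$, since $dt=\lambda x^{\lambda-1}dx$ absorbs exactly the factor $\lambda x^{(\beta+1)\lambda-1}$ in $\omega^{\alpha,\beta,\lambda}$ and converts it to $t^{\beta}$ in $\omega^{\alpha,\beta,1}$.

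Next, I would apply the change of variable identity already used in the proof of Lemma \ref{plem2},
\bex
\|D^{k}_{\lambda}w\|^{2}_{0,\omega^{\alpha+k,\beta+k,\lambda}}
=\|\partial^{k}_{t}w(t^{1/\lambda})\|^{2}_{0,\omega^{\alpha+k,\beta+k,1}},
\eex
to the function $w=v-\pi_{N,\omega^{\alpha,\beta,\lambda}}v$. Combined with the commuting property from the previous step, this turns the left-hand side of \eqref{peq4} into
\bex
\|\partial^{l}_{t}(\tilde v-\tilde\pi^{\alpha,\beta}_{N}\tilde v)\|_{0,\omega^{\alpha+l,\beta+l,1}}.
\eex
Now the problem is purely classical: this is the Jacobi $L^{2}$-projection error on $[0,1]$, for which the standard estimate (see, e.g., the Shen-Tang-Wang book) gives exactly
$c\sqrt{(N-m+1)!/(N-l+1)!}\,N^{(l-m)/2}\|\partial^{m}_{t}\tilde v\|_{0,\omega^{\alpha+m,\beta+m,1}}$ for $0\le l\le m\le N+1$. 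This yields \eqref{peq4}.

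For the simplified form \eqref{peq5} with fixed $m$, I would apply Stirling's formula to the Gamma ratio $\Gamma(N-m+2)/\Gamma(N-l+2)$ appearing in the factorial quotient; this ratio behaves like $N^{l-m}$ up to a constant tending to $1$, so the factorial factor together with $N^{(l-m)/2}$ produces $cN^{l-m}$. The bound \eqref{pequa5} is just \eqref{peq5} with $l=0$. Finally, for \eqref{pequ1}, I would use the direct calculation
\bex
\bigl(D^{1}_{\lambda}w(x)\bigr)^{2}\omega^{\alpha+1,\beta+1,\lambda}(x)
=\lambda^{-2}x^{2(1-\lambda)}\bigl(w'(x)\bigr)^{2}\,\lambda(1-x^{\lambda})^{\alpha+1}x^{(\beta+2)\lambda-1}
=\bigl(w'(x)\bigr)^{2}\tilde\omega^{\alpha,\beta,\lambda}(x),
\eex
which identifies $\|D^{1}_{\lambda}w\|_{0,\omega^{\alpha+1,\beta+1,\lambda}}$ with $\|\partial_{x}w\|_{0,\tilde\omega^{\alpha,\beta,\lambda}}$; substituting this into \eqref{peq5} at $l=1$ gives \eqref{pequ1}. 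The only step requiring care is verifying the commuting property of the projector with the variable change; once that is in hand, the proposition reduces to the known classical Jacobi estimate plus bookkeeping.
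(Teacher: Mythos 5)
Your proposal is correct, but it follows a different route from the paper. The paper does not reduce to the classical shifted Jacobi projection; instead it works directly in the fractional basis: it expands $v-\pi_{N,\omega^{\alpha,\beta,\lambda}}v$ in the $J^{\alpha,\beta,\lambda}_{n}$, invokes the orthogonality of the derivatives $D^{k}_{\lambda}J^{\alpha,\beta,\lambda}_{n}$ with respect to $\omega^{\alpha+k,\beta+k,\lambda}$ (Lemma \ref{lem4}) to write $\|D^{l}_{\lambda}(v-\pi_{N,\omega^{\alpha,\beta,\lambda}}v)\|^{2}_{0,\omega^{\alpha+l,\beta+l,\lambda}}$ as a tail sum $\sum_{n\ge N+1}\hat h^{\alpha,\beta}_{n,l}|\hat v^{\alpha,\beta}_{n}|^{2}$, bounds it by $\max_{n\ge N+1}\hat h^{\alpha,\beta}_{n,l}/\hat h^{\alpha,\beta}_{n,m}$ times $\|D^{m}_{\lambda}v\|^{2}_{0,\omega^{\alpha+m,\beta+m,\lambda}}$, computes that ratio explicitly from \eqref{peq1}, and only then uses the change-of-variable identity \eqref{peq6} to replace the right-hand side by $\|\partial^{m}_{x}\{v(x^{1/\lambda})\}\|_{0,\omega^{\alpha+m,\beta+m,1}}$; the passage to \eqref{peq5} via the Stirling-type Gamma ratio bound and the $l=1$ weight identification for \eqref{pequ1} then coincide with your final steps. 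Your argument instead observes that $t=x^{\lambda}$ induces an isometry of $L^{2}_{\omega^{\alpha,\beta,\lambda}}(I)$ onto $L^{2}_{\omega^{\alpha,\beta,1}}(I)$ mapping $P^{\lambda}_{N}(I)$ onto $P^{1}_{N}(I)$, so the projector commutes with the substitution, and the whole proposition becomes the known classical Jacobi estimate (this is the same transference device the paper itself uses later for the interpolation operator in \eqref{equati1}). What your route buys is economy and conceptual clarity: once the commuting property is checked, nothing fractional remains to prove and the classical theorem can be cited as a black box. What the paper's route buys is self-containedness and explicit constants: it re-derives the bound from its own Lemma \ref{lem4}, with the ratio $\hat h^{\alpha,\beta}_{N+1,l}/\hat h^{\alpha,\beta}_{N+1,m}$ written out, rather than appealing to the literature. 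The only points needing care in your version — verifying the projector identity $\pi_{N,\omega^{\alpha,\beta,\lambda}}v(x)=(\tilde\pi^{\alpha,\beta}_{N}\tilde v)(x^{\lambda})$ and making sure the cited classical estimate is the shifted-to-$[0,1]$ statement with weight pairs $(\alpha+l,\beta+l)$ and $(\alpha+m,\beta+m)$ for all $\alpha,\beta>-1$ — are exactly as you flagged, and both check out.
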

\begin{proof}
For $v(x^{\frac{1}{\lambda}})\in B^{m,1}_{\alpha,\beta}(I)$,
Lemma \ref{plem2} gives $ v(x)\in B^{m,\lambda}_{\alpha,\beta}(I)$.
Thanks to \eqref{piN} and the orthogonality \eqref{eq11}, we have
\bex
\|D^{l}_{\lambda}(v-\pi_{N,\omega^{\alpha,\beta,\lambda}}v)\|^{2}_{0,\omega^{\alpha+l,\beta+l,\lambda}}
=\dps\sum_{n=N+1}^{\infty}\hat{h}^{\alpha,\beta}_{n,l}|\hat{v}^{\alpha,\beta}_{n}|^{2}
=\sum_{n=N+1}^{\infty}\frac{\hat{h}^{\alpha,\beta}_{n,l}}{\hat{h}^{\alpha,\beta}_{n,m}}\hat{h}^{\alpha,\beta}_{n,m}|\hat{v}^{\alpha,\beta}_{n}|^{2}.
\eex
Further estimation on the right hand side term gives
\barr\label{peq2}
\|D^{l}_{\lambda}(v-\pi_{N,\omega^{\alpha,\beta,\lambda}}v)\|^{2}_{0,\omega^{\alpha+l,\beta+l,\lambda}}
\dps\leq\max_{n\geq N+1}\Big\{\frac{\hat{h}^{\alpha,\beta}_{n,l}}{\hat{h}^{\alpha,\beta}_{n,m}}\Big\}\sum_{n=0}^{\infty}\hat{h}^{\alpha,\beta}_{n,m}|\hat{v}^{\alpha,\beta}_{n}|^{2}
\leq \frac{\hat{h}^{\alpha,\beta}_{N+1,l}}{\hat{h}^{\alpha,\beta}_{N+1,m}}\|D^{m}_{\lambda}v\|^{{\color{black}2}}_{0,\omega^{\alpha+m,\beta+m,\lambda}}.
\earr
In virtue of \eqref{peq1}, we find that for $0\leq l\leq m\leq N+1$,
\barr\label{peq3}
\dps\frac{\hat{h}^{\alpha,\beta}_{N+1,l}}{\hat{h}^{\alpha,\beta}_{N+1,m}}&=\dps\frac{\Gamma(N+\alpha+\beta+l+2)}{\Gamma(N+\alpha+\beta+m+2)}\frac{(N-m+1)!}{(N-l+1)!}\\[9pt]
&=\dps\frac{1}{(N+\alpha+\beta+l+2)(N+\alpha+\beta+l+3)\cdots(N+\alpha+\beta+1+m)}\frac{(N-m+1)!}{(N-l+1)!}\\[9pt]
&\dps\leq N^{l-m}\frac{(N-m+1)!}{(N-l+1)!},
\earr
where we have used the facts: $\alpha+\beta+2>0.$ Then the estimate \eqref{peq4} follows from \eqref{peq2}, \eqref{peq3}, and \eqref{peq6}.
\\
Next, we prove \eqref{peq5}. Recall the property of the Gamma function (see \cite[(6.1.38)]{Abr72}):
\beq
\Gamma(x+1)=\sqrt{2\pi}x^{x+1/2}\exp\big(-x+\frac{\theta}{12x}\big),\ \ \ \forall x>0,\ 0<\theta<1.
\eeq
Moreover it can be shown that for any constant $a,b\in \mathbb{R}, n+a>1$ and $ n+b>1$
(see \cite[Lemma 2.1]{Zhao13},\cite{CSW14}),
\beq
\dps\frac{\Gamma(n+a)}{\Gamma(n+b)}\leq\nu^{a,b}_{n}n^{a-b},
\eeq
where
\beq
\nu^{a,b}_{n}=\exp\Big(\frac{a-b}{2(n+b-1)}+\frac{1}{12(n+a-1)}+{\color{black}\frac{(a-1)(b-1)}{n}}\Big).
\eeq
Hence, we obtain
\bq\label{peq7}
\dps\frac{(N-m+1)!}{(N-l+1)!}=\frac{\Gamma(N-m+2)}{\Gamma(N-l+2)}\leq \nu^{2-m,2-l}_{N}N^{l-m},
\eq
where $\nu^{2-m,2-l}_{N}\approx1$ for fixed $m$ and $N\gg1.$ \\
Combining \eqref{peq4} and \eqref{peq7} gives \eqref{peq5}.
Furthermore, using \eqref{peq5} and the definition of $D^{1}_{\lambda}$ we obtain
\eqref{pequa5} and \eqref{pequ1}.
\end{proof}

\subsection{$I^{\alpha,\beta}_{N,\lambda}$-interpolation on fractional Jacobi-Gauss-type points with $\alpha,\beta>-1$}

Let $h^{\alpha,\beta}_{j,\lambda}(x)$ be the generalized Lagrange basis function:
\begin{equation}\label{eq4}
h^{\alpha,\beta}_{j,\lambda}(x)=\prod_{i=0,i\neq j}^{N}\frac{x^{\lambda}-x_{i}^{\lambda}}{x^{\lambda}_{j}-x^{\lambda}_{i}},~~~~0\leq j\leq N,
\end{equation}
where $x_{0}<x_{1}< \dots < x_{N-1}<x_{N}$ are zeros in $I$ of $J^{\alpha,\beta,\lambda}_{N+1}(x)$.
It is clear that the functions $h^{\alpha,\beta}_{j,\lambda}(x)$ satisfy
\begin{equation*}
h^{\alpha,\beta}_{j,\lambda}(x_{i})=\delta_{ij}.
\end{equation*}
Let $z(x)=x^{\lambda}$. Then $z_{i}:=z(x_{i})=x_{i}^{\lambda},~~0\leq i\leq N$,
are zeros of $J^{\alpha,\beta,1}_{N+1}(x)$, and
\begin{equation}\label{equ6}
h^{\alpha,\beta}_{j,\lambda}(x)
=h^{\alpha,\beta}_{j,1}(z)
:=\prod_{i=0,i\neq j}^{N}\frac{z-z_{i}}{z_{j}-z_{i}},~~~~0\leq j\leq N.
\end{equation}
We define the generalized interpolation operator $I^{\alpha,\beta}_{N,\lambda}$ by
\begin{equation*}
I^{\alpha,\beta}_{N,\lambda}v(x)=\sum_{j=0}^{N}v(x_{j})h^{\alpha,\beta}_{j,\lambda}(x).
\end{equation*}
Then
\begin{equation}\label{lemm3}
I^{\alpha,\beta}_{N,\lambda}v(x)
=\dps\sum_{j=0}^{N}v(z_{j}^{1/\lambda})h^{\alpha,\beta}_{j,1}(z)=\dps I^{\alpha,\beta}_{N,1}v(z^{1/\lambda}), \ \ z=x^{\lambda}.
\end{equation}

\begin{lemma}[Case $\lambda=1$, \cite{STW10} Lemma 3.8]\label{lem1}
The interpolation operator $I^{\alpha,\beta}_{N,1}$ satisfies following stability estimate:
for any $v\in B^{1,1}_{\alpha,\beta}(I)$,
\begin{equation*}
\|I^{\alpha,\beta}_{N,1}v\|_{0,\omega^{\alpha,\beta,1}}\leq c\big(\|v\|_{0,\omega^{\alpha,\beta,1}}
+N^{-1}\|\partial_{x}v\|_{0,\omega^{\alpha+1,\beta+1,1}}\big).
\end{equation*}
\end{lemma}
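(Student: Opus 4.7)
The plan is to combine exactness of the Jacobi-Gauss quadrature with a local Taylor-type estimate at each interpolation node. Since $I^{\alpha,\beta}_{N,1}v\in P_{N}(I)$, the product $(I^{\alpha,\beta}_{N,1}v)^{2}$ is a polynomial of degree $2N$, hence is integrated exactly by the $(N+1)$-point Jacobi-Gauss quadrature associated with the weight $\omega^{\alpha,\beta,1}$. Therefore
\[
\|I^{\alpha,\beta}_{N,1}v\|^{2}_{0,\omega^{\alpha,\beta,1}}=\sum_{j=0}^{N}v(z_{j})^{2}\omega_{j},
\]
where $\{\omega_{j}\}$ are the corresponding Gauss weights. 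The task then reduces to bounding this discrete quantity by the two quantities on the right-hand side.

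To control each nodal value, I would attach to $z_{j}$ a Voronoi-type interval $\Omega_{j}\subset I$, with endpoints at midpoints of consecutive nodes, so that $\{\Omega_{j}\}$ tiles $I$ and $\omega_{j}\leq c\int_{\Omega_{j}}\omega^{\alpha,\beta,1}(x)\,dx$ uniformly in $N$ and $j$. For any $x\in\Omega_{j}$ the fundamental theorem of calculus together with the Cauchy-Schwarz inequality yields
\[
v(z_{j})^{2}\leq 2v(x)^{2}+2|z_{j}-x|\int_{\Omega_{j}}|\partial_{x}v(t)|^{2}\,dt.
\]
Integrating this in $x$ over $\Omega_{j}$ against the weight $\omega^{\alpha,\beta,1}$ and setting $h_{j}:=|\Omega_{j}|$, one obtains
\[
v(z_{j})^{2}\omega_{j}\leq c\int_{\Omega_{j}}v(x)^{2}\omega^{\alpha,\beta,1}(x)\,dx+c\,h_{j}^{2}\int_{\Omega_{j}}|\partial_{x}v(t)|^{2}\omega^{\alpha,\beta,1}(t)\,dt.
\]
The decisive input is the classical asymptotics of Jacobi-Gauss nodes: in the bulk $h_{j}\sim N^{-1}$, whereas near the endpoints $h_{j}\sim N^{-2}$ and $\min\{x,1-x\}\sim j^{2}/N^{2}$, while the weight $\omega^{\alpha+1,\beta+1,1}(x)=(1-x)x\,\omega^{\alpha,\beta,1}(x)$ contains exactly the factor $(1-x)x$ needed to absorb the extra endpoint clustering of $h_{j}$. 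Summing the local bound in $j$ and regrouping the boundary and interior contributions then produces the desired inequality
\[
\sum_{j=0}^{N}v(z_{j})^{2}\omega_{j}\leq c\|v\|^{2}_{0,\omega^{\alpha,\beta,1}}+cN^{-2}\|\partial_{x}v\|^{2}_{0,\omega^{\alpha+1,\beta+1,1}}.
\]

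The main obstacle lies near $x=0$ and $x=1$, where the Jacobi-Gauss nodes accumulate at rate $N^{-2}$ and $\omega^{\alpha,\beta,1}$ can either blow up or vanish depending on the signs of $\alpha$ and $\beta$. Managing these boundary regions requires sharp asymptotics for Jacobi-Gauss nodes and weights (in the spirit of Szeg\H{o}) together with a Hardy-type inequality that converts the $h_{j}^{2}$ factor into the weight $\omega^{\alpha+1,\beta+1,1}$ on the right-hand side. Once these boundary estimates are secured, the interior contributions are routine. This is precisely the route taken in \cite{STW10}, where Lemma 3.8 is established.
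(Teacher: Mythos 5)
First, a point of reference: the paper does not prove this lemma at all — it is imported verbatim from \cite{STW10} (Lemma 3.8 there) — so your sketch can only be measured against the standard argument in that source. Its architecture you have reproduced correctly: since $(I^{\alpha,\beta}_{N,1}v)^2\in P_{2N}$, Gauss-quadrature exactness reduces the claim to the discrete bound $\sum_{j}v(z_j)^2\omega_j\le c\big(\|v\|^2_{0,\omega^{\alpha,\beta,1}}+N^{-2}\|\partial_x v\|^2_{0,\omega^{\alpha+1,\beta+1,1}}\big)$, which is then obtained cell by cell using Markov--Stieltjes-type bounds $\omega_j\lesssim\int_{\Omega_j}\omega^{\alpha,\beta,1}$ and the spacing asymptotics $h_j\lesssim N^{-1}\sqrt{z_j(1-z_j)}$. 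In the interior of $I$ this is fine.

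The genuine gap is at the two endpoint cells, and it is not curable by simply invoking Szeg\H{o} asymptotics or a Hardy inequality afterwards, because the inequality you actually display discards the needed information too early. You use the unweighted Cauchy--Schwarz bound $|v(z_j)-v(x)|^2\le|z_j-x|\int_{\Omega_j}|\partial_t v|^2dt$ and only then insert the weight. On $\Omega_0$ (the cell touching $x=0$) this fails twice. First, for $\beta\ge 0$ a function in $B^{1,1}_{\alpha,\beta}(I)$ need not have $\partial_x v\in L^2(\Omega_0)$ at all (only $\int|\partial_t v|^2(1-t)^{\alpha+1}t^{\beta+1}dt<\infty$ is assumed), so the bound can be vacuous. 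Second, and decisively, your summation step needs $h_j^2\,\omega^{\alpha,\beta,1}(t)\lesssim N^{-2}\,\omega^{\alpha+1,\beta+1,1}(t)$, i.e.\ $h_j^2\lesssim N^{-2}t(1-t)$ pointwise on $\Omega_j$; on $\Omega_0$ one has $h_0\sim N^{-2}$ while $t(1-t)\to 0$ inside the cell, so this is false on the inner part of the cell, and a derivative concentrated there shows that $h_0^2\int_{\Omega_0}|\partial_t v|^2\omega^{\alpha,\beta,1}dt$ genuinely cannot be dominated by $N^{-2}\int_{\Omega_0}|\partial_t v|^2\omega^{\alpha+1,\beta+1,1}dt$. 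The repair must happen in the Cauchy--Schwarz step itself: write $|v(z_0)-v(x)|^2\le\bigl(\int|\partial_t v|^2(1-t)^{\alpha+1}t^{\beta+1}dt\bigr)\bigl(\int(1-t)^{-\alpha-1}t^{-\beta-1}dt\bigr)$, integrate in $x$ against $\omega^{\alpha,\beta,1}$ over the cell, and use $z_0\sim N^{-2}$, $\omega_0\lesssim\int_{\Omega_0}\omega^{\alpha,\beta,1}$; the resulting prefactor is $O(N^{-2})$ uniformly in $\alpha,\beta>-1$, which is exactly the estimate needed at the boundary. So: right strategy and correct in the bulk, but the one concrete local inequality you give breaks precisely in the endpoint region where the whole difficulty of the cited lemma resides, and that part is deferred rather than done.
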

\begin{proposition}\label{th2}
For any $v(x^{\frac{1}{\lambda}})\in {\color{black}B^{1,1}_{\alpha,\beta}(I)}$, we have
\begin{equation}\label{pequ6}
\|I^{\alpha,\beta}_{N,\lambda}v\|_{0,\omega^{\alpha,\beta,\lambda}}
\leq c\big(\|v\|_{0,\omega^{\alpha,\beta,\lambda}}
+N^{-1}\|D^{1}_{\lambda}v\|_{0,\omega^{\alpha+1,\beta+1,\lambda}}\big).
\end{equation}
\end{proposition}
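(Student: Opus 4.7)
The plan is to reduce the statement to the already-known $\lambda=1$ case (Lemma \ref{lem1}) via the change of variable $z=x^{\lambda}$, together with the identification of spaces provided by Lemma \ref{plem2} and the norm identity \eqref{peq6}.

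First, I would set $w(z):=v(z^{1/\lambda})$. By Lemma \ref{plem2}, the hypothesis $v(x^{1/\lambda})\in B^{1,1}_{\alpha,\beta}(I)$ means precisely that $w\in B^{1,1}_{\alpha,\beta}(I)$, so Lemma \ref{lem1} applies to $w$ and yields
\begin{equation*}
\|I^{\alpha,\beta}_{N,1}w\|_{0,\omega^{\alpha,\beta,1}}\leq c\big(\|w\|_{0,\omega^{\alpha,\beta,1}}+N^{-1}\|\partial_{z}w\|_{0,\omega^{\alpha+1,\beta+1,1}}\big).
\end{equation*}

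Second, I would translate the three norms in this inequality back into the $\lambda$-weighted norms of $v$. The key identity \eqref{equ6}--\eqref{lemm3} gives $I^{\alpha,\beta}_{N,\lambda}v(x)=I^{\alpha,\beta}_{N,1}w(x^{\lambda})$. Performing the substitution $z=x^{\lambda}$, $dz=\lambda x^{\lambda-1}dx$ in the integral
\begin{equation*}
\|I^{\alpha,\beta}_{N,\lambda}v\|^{2}_{0,\omega^{\alpha,\beta,\lambda}}=\int_{0}^{1}\lambda(1-x^{\lambda})^{\alpha}x^{(\beta+1)\lambda-1}\big(I^{\alpha,\beta}_{N,1}w(x^{\lambda})\big)^{2}dx,
\end{equation*}
the Jacobian factor $\lambda x^{\lambda-1}$ cancels out a factor $\lambda x^{\lambda-1}=\lambda z^{(\lambda-1)/\lambda}$, leaving the integrand $(1-z)^{\alpha}z^{\beta}\bigl(I^{\alpha,\beta}_{N,1}w(z)\bigr)^{2}$, which is exactly $\|I^{\alpha,\beta}_{N,1}w\|^{2}_{0,\omega^{\alpha,\beta,1}}$. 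The same bookkeeping, or equivalently the already-proved identity \eqref{peq6} with $k=0$ and $k=1$, gives
\begin{equation*}
\|w\|_{0,\omega^{\alpha,\beta,1}}=\|v\|_{0,\omega^{\alpha,\beta,\lambda}},\qquad \|\partial_{z}w\|_{0,\omega^{\alpha+1,\beta+1,1}}=\|D^{1}_{\lambda}v\|_{0,\omega^{\alpha+1,\beta+1,\lambda}}.
\end{equation*}

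Finally, I would substitute these three identities into the stability estimate for $I^{\alpha,\beta}_{N,1}w$ to obtain \eqref{pequ6}. There is essentially no serious obstacle here: the only thing one must be careful about is correctly tracking the powers of $x$ and the Jacobian when changing variables, so that the weight $\omega^{\alpha,\beta,\lambda}$ is exactly mapped onto $\omega^{\alpha,\beta,1}$ (and likewise for $\omega^{\alpha+1,\beta+1,\lambda}$ onto $\omega^{\alpha+1,\beta+1,1}$, accounting for the factor $\lambda^{-2}x^{2(1-\lambda)}$ coming from $(D^{1}_{\lambda})^{2}$). This algebra is the same as the one displayed in the author's reply on $\tilde{\omega}^{\alpha,\beta,\lambda}$, and simply confirms that the conclusion transfers verbatim from the $\lambda=1$ case to the general $\lambda\in(0,1]$ case.
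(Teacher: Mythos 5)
Your proof is correct and follows essentially the same route as the paper: the change of variable $z=x^{\lambda}$ to obtain $\|I^{\alpha,\beta}_{N,\lambda}v\|_{0,\omega^{\alpha,\beta,\lambda}}=\|I^{\alpha,\beta}_{N,1}v(z^{1/\lambda})\|_{0,\omega^{\alpha,\beta,1}}$, an application of Lemma \ref{lem1} to $v(z^{1/\lambda})$, and the norm identities of \eqref{peq6} (Lemma \ref{plem2}) to return to the $\lambda$-weighted norms. No gaps; this is the paper's argument.
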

\begin{proof}
Making the variable change $z=x^{\lambda}$ in the definition of $I^{\alpha,\beta}_{N,\lambda}v(x)$,
we get
\begin{equation}\label{equati1}
\begin{array}{r@{}l}
\|I^{\alpha,\beta}_{N,\lambda}v\|^{2}_{{\color{black}0,}\omega^{\alpha,\beta,\lambda}}&
=\dps\int_{0}^{1}\Big[\sum^{N}_{i=0}v(x_{i})h_{i,1}^{\alpha,\beta}(x^{\lambda})\Big]^{2}\lambda(1-x^{\lambda})^{\alpha} x^{(\beta+1)\lambda-1}dx\\[9pt]
&=\dps\int_{0}^{1}\Big[\sum^{N}_{i=0}v(z^{1/\lambda}_{i})h_{i,1}^{\alpha,\beta}(x^{\lambda})\Big]^{2}\lambda(1-x^{\lambda})^{\alpha} x^{(\beta+1)\lambda-1}dx\\[9pt]
&=\dps\int_{0}^{1}\Big[\sum^{N}_{i=0}v(z^{1/\lambda}_{i})h_{i,1}^{\alpha,\beta}(z)\Big]^{2}(1-z)^{\alpha} z^{\beta}dz\\[9pt]
&=\|I^{\alpha,\beta}_{N,1}v(z^{1/\lambda})\|^{2}_{{\color{black}0,}\omega^{\alpha,\beta,1}}.\\
\end{array}
\end{equation}
Then it follows from Lemma \ref{lem1} and \eqref{equati1}
\begin{equation}\label{pequ5}
\begin{array}{r@{}l}
\|I^{\alpha,\beta}_{N,\lambda}v(x)\|_{{\color{black}0,}\omega^{\alpha,\beta,\lambda}}
=\|I^{\alpha,\beta}_{N,1}v(x^{1/\lambda})\|_{{\color{black}0,}\omega^{\alpha,\beta,1}}
\leq c(\|v(x^{\frac{1}{\lambda}})\|_{0,\omega^{\alpha,\beta,1}}+N^{-1}\|\partial_{x}v(x^{\frac{1}{\lambda}})\|_{0,\omega^{\alpha+1,\beta+1,1}}).\\
\end{array}
\end{equation}
The desired result \eqref{pequ6} follows from
combining \eqref{pequ5} and \eqref{peq6}.
\end{proof}
We will also need some inverse inequalities for the $\lambda-$polynomials, which are given in the following
proposition.
\begin{proposition}
For any $\phi\in P^{\lambda}_{N}(I)$, we have
\begin{equation}\label{equ5}
\|\partial_{x}\phi\|_{0,\tilde{\omega}^{\alpha,\beta,\lambda}}\leq \sqrt{\sigma^{\alpha,\beta}_{N}}\|\phi\|_{0,\omega^{\alpha,\beta,\lambda}},
\end{equation}
and
\bq\label{pequ4}
\|D^{k}_{\lambda}\phi\|_{0,\omega^{\alpha+k,\beta+k,\lambda}}\leq c N^{k}\|\phi\|_{0,\omega^{\alpha,\beta,\lambda}},\ \ \ k\geq1,
\eq
where $\sigma^{\alpha,\beta}_{N}$ is defined in \eqref{eq3} and $c\approx1$ for fixed $k$ and $N\gg1$.
\end{proposition}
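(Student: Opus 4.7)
The plan is to prove both inequalities by expanding $\phi$ in the basis of fractional Jacobi polynomials and exploiting the orthogonality relations already established. Write
\begin{equation*}
\phi(x)=\sum_{n=0}^{N}\widehat{\phi}_{n}J^{\alpha,\beta,\lambda}_{n}(x),
\end{equation*}
so that Lemma \ref{lem8} and identity \eqref{peq8} give
$\|\phi\|^{2}_{0,\omega^{\alpha,\beta,\lambda}}=\sum_{n=0}^{N}|\widehat{\phi}_{n}|^{2}\widehat{\gamma}^{\alpha,\beta}_{n}$
and
$\|D^{1}_{\lambda}\phi\|^{2}_{0,\omega^{\alpha+1,\beta+1,\lambda}}=\sum_{n=0}^{N}|\widehat{\phi}_{n}|^{2}\sigma^{\alpha,\beta}_{n}\widehat{\gamma}^{\alpha,\beta}_{n}$.
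The map $n\mapsto\sigma^{\alpha,\beta}_{n}=n(n+\alpha+\beta+1)$ is increasing, so the partial sum on the right is bounded by $\sigma^{\alpha,\beta}_{N}\|\phi\|^{2}_{0,\omega^{\alpha,\beta,\lambda}}$.

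For inequality \eqref{equ5}, the point is that $\|\partial_{x}\phi\|^{2}_{0,\tilde{\omega}^{\alpha,\beta,\lambda}}$ coincides with $\|D^{1}_{\lambda}\phi\|^{2}_{0,\omega^{\alpha+1,\beta+1,\lambda}}$ by the very choice of $\tilde{\omega}^{\alpha,\beta,\lambda}$. Indeed, $\partial_{x}\phi=\lambda x^{\lambda-1}D^{1}_{\lambda}\phi$ by Definition \ref{def1}, so
\begin{equation*}
\|\partial_{x}\phi\|^{2}_{0,\tilde{\omega}^{\alpha,\beta,\lambda}}
=\int_{0}^{1}\lambda^{2}x^{2(\lambda-1)}(D^{1}_{\lambda}\phi)^{2}\lambda^{-1}(1-x^{\lambda})^{\alpha+1}x^{\beta\lambda+1}\,dx
=\|D^{1}_{\lambda}\phi\|^{2}_{0,\omega^{\alpha+1,\beta+1,\lambda}},
\end{equation*}
and the estimate of the previous paragraph yields \eqref{equ5} after taking square roots.

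For \eqref{pequ4}, I expand again and use the orthogonality \eqref{eq11} of $D^{k}_{\lambda}J^{\alpha,\beta,\lambda}_{n}$ to obtain
\begin{equation*}
\|D^{k}_{\lambda}\phi\|^{2}_{0,\omega^{\alpha+k,\beta+k,\lambda}}
=\sum_{n=k}^{N}|\widehat{\phi}_{n}|^{2}\widehat{h}^{\alpha,\beta}_{n,k}
\le\Big(\max_{k\le n\le N}\frac{\widehat{h}^{\alpha,\beta}_{n,k}}{\widehat{\gamma}^{\alpha,\beta}_{n}}\Big)\|\phi\|^{2}_{0,\omega^{\alpha,\beta,\lambda}}.
\end{equation*}
From \eqref{peq1} and Lemma \ref{lem8},
\begin{equation*}
\frac{\widehat{h}^{\alpha,\beta}_{n,k}}{\widehat{\gamma}^{\alpha,\beta}_{n}}
=\frac{\Gamma(n+k+\alpha+\beta+1)}{\Gamma(n+\alpha+\beta+1)}\cdot\frac{n!}{(n-k)!}.
\end{equation*}
The second factor is $n(n-1)\cdots(n-k+1)\le N^{k}$, and the first factor is bounded by $\nu_{n}N^{k}$ via the Gamma-ratio estimate used in \eqref{peq7}, with $\nu_{n}\to 1$ as $n\to\infty$ for fixed $k$. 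Combining the two bounds gives the ratio $\le cN^{2k}$ with $c\approx 1$ for $N\gg1$, which yields \eqref{pequ4} after extracting the square root.

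The step that requires the most care is the Gamma-ratio bound in \eqref{pequ4}; once one invokes the same asymptotic estimate already exploited in the proof of Proposition \ref{th1} (the inequality $\Gamma(n+a)/\Gamma(n+b)\le\nu^{a,b}_{n}n^{a-b}$), the remainder is essentially algebraic. Everything else follows mechanically from the orthogonal expansion and the identity $\partial_{x}\phi=\lambda x^{\lambda-1}D^{1}_{\lambda}\phi$.
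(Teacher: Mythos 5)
Your proof is correct and follows essentially the same route as the paper: expand $\phi$ in the fractional Jacobi basis, use the orthogonality of $D^{k}_{\lambda}J^{\alpha,\beta,\lambda}_{n}$ (together with the weight identity $\|\partial_{x}\phi\|_{0,\tilde{\omega}^{\alpha,\beta,\lambda}}=\|D^{1}_{\lambda}\phi\|_{0,\omega^{\alpha+1,\beta+1,\lambda}}$, which you verify explicitly and the paper uses implicitly), and bound the ratio $\widehat{h}^{\alpha,\beta}_{n,k}/\widehat{\gamma}^{\alpha,\beta}_{n}$ by the same Gamma-ratio estimate. The only cosmetic difference is that the paper observes this ratio is increasing in $n$ and evaluates it at $n=N$, which is also the cleanest justification of your claim that $c\approx 1$ for $N\gg 1$, since $\nu_{n}$ need not be close to $1$ for small $n$.
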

\begin{proof}
For any $\phi\in P^{\lambda}_{N}(I)$, we express it under the form
\begin{equation}\label{equ2}
\phi(x)=\sum_{i=0}^{N}\hat{\phi}_{i}^{\alpha,\beta}J^{\alpha,\beta,\lambda}_{i}(x), \mbox{ with }\dps\hat{\phi}_{i}^{\alpha,\beta}=\frac{(\phi,J^{\alpha,\beta,\lambda}_{i})_{\omega^{\alpha,\beta,\lambda}}}
{\hat{\gamma}_{i}^{\alpha,\beta}},
\end{equation}
where $\hat{\gamma}_{i}^{\alpha,\beta}$ was defined in \eqref{eq12}.
Then we deduce from the orthogonality of $\{J^{\alpha,\beta,\lambda}_{i}(x)\}_{i=0}^{N}$
\begin{equation*}
\|\phi\|^{2}_{0,\omega^{\alpha,\beta,\lambda}}=\sum_{i=0}^{N}\hat{\gamma}_{i}^{\alpha,\beta}|\hat{\phi}_{i}^{\alpha,\beta}|^{2}.
\end{equation*}
Applying the differential operator $D^{1}_{\lambda}$ to the both sides of \eqref{equ2}, and using the orthogonality \eqref{peq8}, we get
\begin{equation*}
\|D^{1}_{\lambda}\phi\|^{2}_{0,\omega^{\alpha+1,\beta+1,\lambda}}=\|\partial_{x}\phi\|^{2}_{0,\tilde{\omega}^{\alpha,\beta,\lambda}}=\dps\sum_{i=0}^{N}\sigma^{\alpha,\beta}_{i}\hat{\gamma}_{i}^{\alpha,\beta}|\hat{\phi}_{i}^{\alpha,\beta}|^{2}
\leq\sigma^{\alpha,\beta}_{N}\dps\sum_{i=0}^{N}\hat{\gamma}_{i}^{\alpha,\beta}|\hat{\phi}_{i}^{\alpha,\beta}|^{2}
\leq \sigma^{\alpha,\beta}_{N}\|\phi\|_{0,\omega^{\alpha,\beta,\lambda}}^{2}.
\end{equation*}
This results in \eqref{equ5}.
\\
Applying the operators $D^{1}_{\lambda}$ $k$ times to \eqref{equ2} and using \eqref{eq11},
we obtain
\barr\label{pequ2}
\|D^{k}_{\lambda}\phi\|^{2}_{0,\omega^{\alpha+k,\beta+k,\lambda}}&=\dps\sum_{n=k}^{N}\hat{h}^{\alpha,\beta}_{n,k}|\hat{\phi}^{\alpha,\beta}_{n}|^{2}=\sum_{n=k}^{N}\frac{\Gamma(n+k+\alpha+\beta+1)n!}{\Gamma(n+\alpha+\beta+1)(n-k)!}\hat{\gamma}^{\alpha,\beta}_{n}|\hat{\phi}^{\alpha,\beta}_{n}|^{2}\\[9pt]
&\leq\dps\frac{\Gamma(N+k+\alpha+\beta+1)N!}{\Gamma(N+\alpha+\beta+1)(N-k)!}\sum_{n=k}^{N}\hat{\gamma}^{\alpha,\beta}_{n}|\hat{\phi}^{\alpha,\beta}_{n}|^{2}\\[9pt]
&\leq\dps\frac{\Gamma(N+k+\alpha+\beta+1)N!}{\Gamma(N+\alpha+\beta+1)(N-k)!}\|\phi\|^{2}_{0,\omega^{\alpha,\beta,\lambda}}.\\
\earr
Similar to the discussion for \eqref{peq7}, we have
\bq\label{pequ3}
\dps\frac{\Gamma(N+k+\alpha+\beta+1)N!}{\Gamma(N+\alpha+\beta+1)(N-k)!}\leq \nu^{k+\alpha+\beta+1,\alpha+\beta+1}_{N}\nu^{1,1-k}_{N}N^{2k},
\eq
where $\nu^{k+\alpha+\beta+1,\alpha+\beta+1}_{N}\nu^{1,1-k}_{N}\approx1$ for fixed $k$ and $N\gg1.$
\\
Then the inequality \eqref{pequ4} follows from combining \eqref{pequ2} and \eqref{pequ3}.
\end{proof}

We now present the error estimation for the fractional Jacobi-Gauss interpolation operator
based on the roots of $J^{\alpha,\beta,\lambda}_{N+1}(x)$.
\begin{proposition}\label{th3}
It holds for any $v(x^{1/\lambda})\in B^{m,1}_{\alpha,\beta}(I), m\geq1$, and any $0\leq l\leq m\leq N+1$,
\begin{equation*}
\|D^{l}_{\lambda}\big(v-I^{\alpha,\beta}_{N,\lambda}v\big)\|_{0,\omega^{\alpha+l,\beta+l,\lambda}}\dps\leq c\sqrt{\frac{(N-m+1)!}{N!}}N^{l-(m+1)/2}\|\partial_{x}^{m}\big\{v(x^{\frac{1}{\lambda}})\big\}\|_{0,\omega^{\alpha+m,\beta+m,1}}.
\end{equation*}
If $m$ is fixed, then the above estimate can be simplified as
\begin{equation}\label{pequa1}
\|D^{l}_{\lambda}\big(v-I^{\alpha,\beta}_{N,\lambda}v\big)\|_{0,\omega^{\alpha+l,\beta+l,\lambda}}\dps\leq cN^{l-m}\|\partial_{x}^{m}\big\{v(x^{\frac{1}{\lambda}})\big\}\|_{0,\omega^{\alpha+m,\beta+m,1}},
\end{equation}
where $c\approx1$ for $N\gg1.$
 In particular, for $l=0, 1$ we have
\bq\label{pequa6}
\|v-I^{\alpha,\beta}_{N,\lambda}v\|_{0,\omega^{\alpha,\beta,\lambda}}\leq cN^{-m}\|\partial_{x}^{m}\big\{v(x^{\frac{1}{\lambda}})\big\}\|_{0,\omega^{\alpha+m,\beta+m,1}},
\eq
\begin{equation}\label{pequa2}
\|\partial_{x}(v-I^{\alpha,\beta}_{N,\lambda}v)\|_{0,\tilde{\omega}^{\alpha,\beta,\lambda}}\leq cN^{1-m}\|\partial_{x}^{m}\big\{v(x^{\frac{1}{\lambda}})\big\}\|_{0,\omega^{\alpha+m,\beta+m,1}}.
\end{equation}
\end{proposition}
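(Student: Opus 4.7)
The plan is to bound $v - I^{\alpha,\beta}_{N,\lambda}v$ by inserting the orthogonal projector and combining three ingredients already established: the projection estimate (Proposition \ref{th1}), the interpolation stability (Proposition \ref{th2}), and the inverse inequality \eqref{pequ4}. Concretely, I write
\begin{equation*}
v - I^{\alpha,\beta}_{N,\lambda}v = (v - \pi_{N,\omega^{\alpha,\beta,\lambda}}v) + (\pi_{N,\omega^{\alpha,\beta,\lambda}}v - I^{\alpha,\beta}_{N,\lambda}v),
\end{equation*}
and control the two pieces separately in the norm $\|D^{l}_{\lambda}\,\cdot\,\|_{0,\omega^{\alpha+l,\beta+l,\lambda}}$.

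For the first piece, Proposition \ref{th1} applied with the given $l$ and $m$ yields directly the desired bound with factor $\sqrt{(N-m+1)!/(N-l+1)!}\,N^{(l-m)/2}$, which is already sharper than the right-hand side of the claim. For the second piece I exploit the fact that $\pi_{N,\omega^{\alpha,\beta,\lambda}}v \in P^{\lambda}_{N}(I)$ is exactly reproduced by the interpolation operator, so that $I^{\alpha,\beta}_{N,\lambda}\pi_{N,\omega^{\alpha,\beta,\lambda}}v = \pi_{N,\omega^{\alpha,\beta,\lambda}}v$, and therefore
\begin{equation*}
\pi_{N,\omega^{\alpha,\beta,\lambda}}v - I^{\alpha,\beta}_{N,\lambda}v = I^{\alpha,\beta}_{N,\lambda}(\pi_{N,\omega^{\alpha,\beta,\lambda}}v - v) \in P^{\lambda}_{N}(I).
\end{equation*}
The inverse inequality \eqref{pequ4} then gives
\begin{equation*}
\|D^{l}_{\lambda}(\pi_{N,\omega^{\alpha,\beta,\lambda}}v - I^{\alpha,\beta}_{N,\lambda}v)\|_{0,\omega^{\alpha+l,\beta+l,\lambda}} \leq c\,N^{l}\,\|I^{\alpha,\beta}_{N,\lambda}(v-\pi_{N,\omega^{\alpha,\beta,\lambda}}v)\|_{0,\omega^{\alpha,\beta,\lambda}},
\end{equation*}
and the stability estimate \eqref{pequ6} of Proposition \ref{th2} bounds the right-hand side by
\begin{equation*}
c\,N^{l}\bigl(\|v-\pi_{N,\omega^{\alpha,\beta,\lambda}}v\|_{0,\omega^{\alpha,\beta,\lambda}} + N^{-1}\|D^{1}_{\lambda}(v-\pi_{N,\omega^{\alpha,\beta,\lambda}}v)\|_{0,\omega^{\alpha+1,\beta+1,\lambda}}\bigr).
\end{equation*}

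To finish, I apply Proposition \ref{th1} twice (once with $l=0$, once with $l=1$) to estimate the two projection errors on the right: each yields a factor $\sqrt{(N-m+1)!/(N+1)!}\,N^{-m/2}$ or $\sqrt{(N-m+1)!/N!}\,N^{(1-m)/2}$ respectively, multiplied by the target semi-norm $\|\partial_{x}^{m}\{v(x^{1/\lambda})\}\|_{0,\omega^{\alpha+m,\beta+m,1}}$. The $N^{-1}$ in front of the second term exactly balances the extra $N^{1/2}$ from the $l=1$ estimate, so after absorbing the first term into the second the dominant contribution scales like $N^{l}\sqrt{(N-m+1)!/N!}\,N^{(1-m)/2-1} = \sqrt{(N-m+1)!/N!}\,N^{l-(m+1)/2}$, which matches the right-hand side of the claim. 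Adding the bound of the first piece (which is smaller in $N$) absorbs into the same expression, and the simplified forms \eqref{pequa1}, \eqref{pequa6}, \eqref{pequa2} follow by applying the factorial identity \eqref{peq7} exactly as in the proof of Proposition \ref{th1}.

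The main obstacle is the bookkeeping of the two factorial ratios so that the sharp rate $\sqrt{(N-m+1)!/N!}\,N^{l-(m+1)/2}$ is recovered rather than a weaker power of $N$; in particular one must verify that the inverse inequality contributes exactly the factor $N^{l}$ needed to upgrade the $l=1$ projection bound into the advertised estimate, and that the $l=0$ projection bound is strictly smaller so that it may be safely absorbed.
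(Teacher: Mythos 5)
Your proposal is correct and follows essentially the same route as the paper's proof: the paper likewise uses the reproduction property to write $I^{\alpha,\beta}_{N,\lambda}v-\pi_{N,\omega^{\alpha,\beta,\lambda}}v=I^{\alpha,\beta}_{N,\lambda}(v-\pi_{N,\omega^{\alpha,\beta,\lambda}}v)$, bounds it via the stability estimate of Proposition \ref{th2} together with the $l=0,1$ cases of Proposition \ref{th1} (yielding the factor $\sqrt{(N-m+1)!/N!}\,N^{-(m+1)/2}$), upgrades with the inverse inequality \eqref{pequ4} to gain $N^{l}$, and concludes by the triangle inequality and the same factorial simplification as in Proposition \ref{th1}. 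Your bookkeeping of the two factorial ratios is accurate (the projection piece is of the same order or smaller, so it absorbs), so the argument is complete.
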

\begin{proof}
From \eqref{peq4} and Proposition \ref{th2}, we deduce
\begin{equation}\label{equ4}
\begin{array}{r@{}l}
&\|I^{\alpha,\beta}_{N,\lambda}v-\pi_{N,\omega^{\alpha,\beta,\lambda}}v\|_{0,\omega^{\alpha,\beta,\lambda}}
=\|I^{\alpha,\beta}_{N,\lambda}(v-\pi_{N,\omega^{\alpha,\beta,\lambda}}v)\|_{0,\omega^{\alpha,\beta,\lambda}}\\[9pt]
&\ \ \ \leq c\big(\|v(x)-\pi_{N,\omega^{\alpha,\beta,\lambda}}v(x)\|_{0,\omega^{\alpha,\beta,\lambda}}+N^{-1}\|D^{1}_{\lambda}\big\{v(x)-\pi_{N,\omega^{\alpha,\beta,\lambda}}v(x)\big\}\|_{0,\omega^{\alpha+1,\beta+1,\lambda}}\big)\\[9pt]
&\ \ \ \dps\leq c\sqrt{\frac{(N-m+1)!}{N!}}N^{-(m+1)/2}\|\partial_{x}^{m}\big\{v(x^{\frac{1}{\lambda}})\big\}\|_{0,\omega^{\alpha+m,\beta+m,1}}.
\end{array}
\end{equation}
Furthermore, using the inverse inequality {\color{black}\eqref{pequ4}}, we obtain
\begin{equation*}
\begin{array}{r@{}l}
&\|D^{l}_{\lambda}\big(I^{\alpha,\beta}_{N,\lambda}v-\pi_{N,\omega^{\alpha,\beta,\lambda}}v\big)\|_{0,\omega^{\alpha+l,\beta+l,\lambda}}\leq cN^{l}\|I^{\alpha,\beta}_{N,\lambda}v-\pi_{N,\omega^{\alpha,\beta,\lambda}}v\|_{0,\omega^{\alpha,\beta,\lambda}}\\[9pt]
&\ \ \ \ \ \dps\leq c\sqrt{\frac{(N-m+1)!}{N!}}N^{l-(m+1)/2}\|\partial_{x}^{m}\big\{v(x^{\frac{1}{\lambda}})\big\}\|_{0,\omega^{\alpha+m,\beta+m,1}}.\\
\end{array}
\end{equation*}
The above inequality, together with the triangle inequality and \eqref{peq4} gives
\begin{equation*}
\begin{array}{r@{}l}
\|D^{l}_{\lambda}\big(v-I^{\alpha,\beta}_{N,\lambda}v\big)\|_{0,\omega^{\alpha+l,\beta+l,\lambda}}&\leq \|D^{l}_{\lambda}\big(v-\pi_{N,\omega^{\alpha,\beta,\lambda}}v\big)\|_{0,\omega^{\alpha+l,\beta+l,\lambda}}\\[9pt]
&\ \ \ \ \ \ \ \ \ \ \ \ \ \ \ \ \ \ +\|D^{l}_{\lambda}\big(\pi_{N,\omega^{\alpha,\beta,\lambda}}u-I^{\alpha,\beta}_{N,\lambda}v\big)\|_{0,\omega^{\alpha+l,\beta+l,\lambda}}\\[9pt]
&\dps\leq c\sqrt{\frac{(N-m+1)!}{N!}}N^{l-(m+1)/2}\|\partial_{x}^{m}\big\{v(x^{\frac{1}{\lambda}})\big\}\|_{0,\omega^{\alpha+m,\beta+m,1}}.
\end{array}
\end{equation*}
This proves the first estimate.
In case $m$ is fixed, the estimates \eqref{pequa1}-\eqref{pequa2} can be derived in a similar way as
in Proposition \ref{th1}.
The proof is completed.
\end{proof}
\begin{lemma}[Weighted Sobolev inequality]\label{lem6}
For all $v(x)\in B^{1,\lambda}_{\alpha,\beta}(I), v(\xi)=0$ for some $\xi\in[0,1]$, and
any $-1<\alpha,\beta<-\frac{1}{2}$, it holds
\begin{equation}\label{equatio6}
\|v\|_{\infty}\leq \sqrt{2}\|v\|^{1/2}_{0,\omega^{\alpha,\beta,\lambda}}\|\partial_{x}v\|^{1/2}_{0,\tilde{\omega}^{\alpha,\beta,\lambda}}.
\end{equation}
\end{lemma}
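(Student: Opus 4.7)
The plan is to reduce the weighted Sobolev inequality to a straightforward application of the fundamental theorem of calculus combined with Cauchy--Schwarz, after first exploiting the special structure of the weights $\omega^{\alpha,\beta,\lambda}$ and $\tilde{\omega}^{\alpha,\beta,\lambda}$ under the range $-1<\alpha,\beta<-1/2$.

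First I would observe the key pointwise relation between the two weights. A direct calculation gives
\beq
\omega^{\alpha,\beta,\lambda}(t)\,\tilde{\omega}^{\alpha,\beta,\lambda}(t)=(1-t^{\lambda})^{2\alpha+1}\,t^{(2\beta+1)\lambda},\qquad t\in(0,1).
\eeq
Under the hypothesis $-1<\alpha,\beta<-1/2$ the exponents $2\alpha+1$ and $(2\beta+1)\lambda$ are both strictly negative, while $1-t^{\lambda}<1$ and $t<1$ on $(0,1)$. Hence
\beq
\omega^{\alpha,\beta,\lambda}(t)\,\tilde{\omega}^{\alpha,\beta,\lambda}(t)\ge 1,\qquad t\in(0,1).
\eeq
This is the essential ingredient that allows a weighted Cauchy--Schwarz step to absorb the unweighted integrand.

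Next, because $v\in B^{1,\lambda}_{\alpha,\beta}(I)$ (so $v$ is absolutely continuous on compact subsets of $(0,1)$) and $v(\xi)=0$, I would apply the fundamental theorem of calculus to $v^{2}$:
\beq
v^{2}(x)=v^{2}(x)-v^{2}(\xi)=2\int_{\xi}^{x}v(t)\,v'(t)\,dt.
\eeq
Taking absolute values and enlarging the integration domain yields
\beq
|v(x)|^{2}\le 2\int_{0}^{1}|v(t)|\,|v'(t)|\,dt.
\eeq
Now I would factor the integrand as $\bigl(|v|\sqrt{\omega^{\alpha,\beta,\lambda}}\bigr)\bigl(|v'|\sqrt{\tilde{\omega}^{\alpha,\beta,\lambda}}\bigr)\cdot(\omega^{\alpha,\beta,\lambda}\tilde{\omega}^{\alpha,\beta,\lambda})^{-1/2}$ and use the pointwise lower bound on the product of the two weights to drop the last factor:
\beq
\int_{0}^{1}|v(t)||v'(t)|\,dt\le \int_{0}^{1}\bigl(|v|\sqrt{\omega^{\alpha,\beta,\lambda}}\bigr)\bigl(|v'|\sqrt{\tilde{\omega}^{\alpha,\beta,\lambda}}\bigr)\,dt.
\eeq
An application of Cauchy--Schwarz to the right-hand side then bounds it by $\|v\|_{0,\omega^{\alpha,\beta,\lambda}}\|\partial_{x}v\|_{0,\tilde{\omega}^{\alpha,\beta,\lambda}}$.

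Combining the two estimates gives $|v(x)|^{2}\le 2\,\|v\|_{0,\omega^{\alpha,\beta,\lambda}}\|\partial_{x}v\|_{0,\tilde{\omega}^{\alpha,\beta,\lambda}}$, and taking the supremum over $x\in I$ and a square root yields \eqref{equatio6}. The only subtle point, which also justifies the restriction on $\alpha,\beta$, is verifying the weight inequality $\omega^{\alpha,\beta,\lambda}\tilde{\omega}^{\alpha,\beta,\lambda}\ge 1$; everything else is a standard Sobolev-embedding type argument. I do not expect a genuine obstacle in this proof.
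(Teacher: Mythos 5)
Your proposal is correct and takes essentially the same route as the paper: both arguments write $v^{2}(x)=2\int_{\xi}^{x}v\,\partial_{x}v\,dt$, apply Cauchy--Schwarz with the weights $\omega^{\alpha,\beta,\lambda}$ and $\tilde{\omega}^{\alpha,\beta,\lambda}$, and hinge on the same pointwise weight comparison, since your inequality $\omega^{\alpha,\beta,\lambda}\tilde{\omega}^{\alpha,\beta,\lambda}=(1-t^{\lambda})^{2\alpha+1}t^{(2\beta+1)\lambda}\ge 1$ is just a rearrangement of the paper's bound $\lambda^{-1}(1-x^{\lambda})^{-\alpha}x^{-(\beta+1)\lambda+1}\le\tilde{\omega}^{\alpha,\beta,\lambda}(x)$, which is exactly where the restriction $-1<\alpha,\beta<-\tfrac{1}{2}$ enters. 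The only cosmetic difference is that you absorb the weight factor before applying Cauchy--Schwarz while the paper does it afterwards; there is no gap.
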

\begin{proof}
Using the Cauchy-Schwarz inequality, we have, for all $x\in [\xi, 1]$,
\begin{equation}\label{equatio2}
\begin{array}{r@{}l}
v^{2}(x)&=\dps\int_{\xi}^{x}dv^{2}(x)=\int_{\xi}^{x}2v(x)\partial_{x} v(x)dx\leq 2\int_{\xi}^{1}|v\partial_{x} v|dx\\
&\dps=2\int_{\xi}^{1}\lambda^{1/2}(1-x^{\lambda})^{\frac{\alpha}{2}}x^{\frac{(\beta+1)\lambda-1}{2}}|v|\Big[\lambda^{-1/2}(1-x^{\lambda})^{-\frac{\alpha}{2}}x^{-\frac{(\beta+1)\lambda-1}{2}}| \partial_{x}v|\Big]dx\\
&\dps\leq 2\|v\|_{0,\omega^{\alpha,\beta,\lambda}}\Big[\int_{0}^{1}\lambda^{-1}(1-x^{\lambda})^{-\alpha}x^{-(\beta+1)\lambda+1}(\partial_{x}v)^{2}dx\Big]^{1/2}.
\end{array}
\end{equation}
The condition $-1<\alpha,\beta<-\dfrac{1}{2}$ guarantees that it holds for any $x\in I$,
\begin{equation}\label{equatio3}
\lambda^{-1}(1-x^{\lambda})^{-\alpha}x^{-(\beta+1)\lambda+1}\leq \lambda^{-1}(1-x^{\lambda})^{\alpha+1}x^{\beta\lambda+1}=\tilde{\omega}^{\alpha,\beta,\lambda}(x).
\end{equation}
Bringing \eqref{equatio3} into \eqref{equatio2} gives
\begin{equation}\label{equatio4}
\dps\max_{x\in[\xi,1]}|v|\leq \sqrt{2}\|v\|_{0,\omega^{\alpha,\beta,\lambda}}^{1/2}\|\partial_{x}v\|_{0,\tilde{\omega}^{\alpha,\beta,\lambda}}^{1/2}.
\end{equation}
In a similar way we can prove
\begin{equation}\label{equatio5}
\dps\max_{x\in[0,\xi]}|v|\leq \sqrt{2}\|v\|_{0,\omega^{\alpha,\beta,\lambda}}^{1/2}\|\partial_{x}v\|_{0,\tilde{\omega}^{\alpha,\beta,\lambda}}^{1/2}.
\end{equation}
This completes the proof.
\end{proof}

\begin{proposition}[Interpolation error in $L^\infty-$norm]\label{th4}
If $-1<\alpha, \beta\leq-\dfrac{1}{2},$ we have
\begin{equation*}
\|v-I_{N,\lambda}^{\alpha,\beta}v\|_{\infty}\leq c N^{1/2-m}\|\partial^{m}_{x}v(x^{1/\lambda})\|_{0,\omega^{\alpha+m,\beta+m,1}},~~\forall v(x^{1/\lambda})\in B^{m,1}_{\alpha,\beta}(I), \ m\ge 1.
\end{equation*}
\end{proposition}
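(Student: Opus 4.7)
The plan is to combine the weighted Sobolev inequality of Lemma \ref{lem6} with the weighted $L^2$-type interpolation estimates already proved in Proposition \ref{th3}. Let $w(x) := v(x) - I^{\alpha,\beta}_{N,\lambda}v(x)$. Since the interpolation operator is exact at the $N+1$ nodes $\{x_i\}_{i=0}^{N}$ (zeros of $J^{\alpha,\beta,\lambda}_{N+1}$), we have $w(x_i)=0$ for each $i$, so in particular there exists $\xi\in[0,1]$ with $w(\xi)=0$, and the hypothesis of Lemma \ref{lem6} is satisfied. Moreover, since $v(x^{1/\lambda})\in B^{m,1}_{\alpha,\beta}(I)$ with $m\geq 1$, Lemma \ref{plem2} gives $v\in B^{m,\lambda}_{\alpha,\beta}(I)\subset B^{1,\lambda}_{\alpha,\beta}(I)$, and $I^{\alpha,\beta}_{N,\lambda}v\in P^{\lambda}_{N}(I)$ is automatically in this space, so $w\in B^{1,\lambda}_{\alpha,\beta}(I)$.

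Applying Lemma \ref{lem6} to $w$ under the assumption $-1<\alpha,\beta\leq -\tfrac{1}{2}$ yields
\begin{equation*}
\|v-I^{\alpha,\beta}_{N,\lambda}v\|_{\infty}\leq \sqrt{2}\,\|v-I^{\alpha,\beta}_{N,\lambda}v\|^{1/2}_{0,\omega^{\alpha,\beta,\lambda}}\,\|\partial_{x}(v-I^{\alpha,\beta}_{N,\lambda}v)\|^{1/2}_{0,\tilde{\omega}^{\alpha,\beta,\lambda}}.
\end{equation*}
The two factors on the right are exactly the quantities already estimated in Proposition \ref{th3}: estimate \eqref{pequa6} controls the first factor by $cN^{-m}\|\partial^{m}_{x}v(x^{1/\lambda})\|_{0,\omega^{\alpha+m,\beta+m,1}}$, and estimate \eqref{pequa2} controls the second factor by $cN^{1-m}\|\partial^{m}_{x}v(x^{1/\lambda})\|_{0,\omega^{\alpha+m,\beta+m,1}}$.

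Taking square roots and multiplying, the exponents combine as $\tfrac{1}{2}(-m)+\tfrac{1}{2}(1-m)=\tfrac{1}{2}-m$, which gives
\begin{equation*}
\|v-I^{\alpha,\beta}_{N,\lambda}v\|_{\infty}\leq c\,N^{1/2-m}\,\|\partial^{m}_{x}v(x^{1/\lambda})\|_{0,\omega^{\alpha+m,\beta+m,1}},
\end{equation*}
which is exactly the claimed bound. The proof is therefore very short once Lemma \ref{lem6} and Proposition \ref{th3} are available: the only genuine work is verifying the vanishing-point hypothesis of the Sobolev inequality (handled by collocation at the Gauss points) and balancing the two orders of convergence. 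The potential obstacle I anticipate is the weight-restriction $-1<\alpha,\beta\leq -\tfrac{1}{2}$, which is built into Lemma \ref{lem6} through the inequality $\lambda^{-1}(1-x^{\lambda})^{-\alpha}x^{-(\beta+1)\lambda+1}\leq \tilde{\omega}^{\alpha,\beta,\lambda}(x)$; this restriction is inherited by the present proposition and cannot be removed without a different Sobolev embedding.
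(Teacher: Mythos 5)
Your proof is correct and follows exactly the paper's own argument: apply the weighted Sobolev inequality of Lemma \ref{lem6} to $v-I^{\alpha,\beta}_{N,\lambda}v$ (which vanishes at the collocation points), then insert the two estimates \eqref{pequa6} and \eqref{pequa2} from Proposition \ref{th3} to balance the exponents into $N^{1/2-m}$. No gaps; this is essentially the same proof as in the paper.
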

\begin{proof}
Obviously if $v(x^{1/\lambda})\in B^{m,1}_{\alpha,\beta}(I), m\ge 1$,
then $v(x)\in B^{1,\lambda}_{\alpha,\beta}(I)$.
By definition, the function $v(x)-I^{\alpha,\beta}_{N,\lambda}v(x)$ vanishes at all the roots of $J^{\alpha,\beta,\lambda}_{N+1}(x)$. Thus we can use
Lemma \ref{lem6} and Proposition \ref{th3} to conclude
\bry
\|v-I_{N,\lambda}^{\alpha,\beta}v\|_{\infty}
&\leq\sqrt{2}\|v-I_{N,\lambda}^{\alpha,\beta}v\|_{0,\omega^{\alpha,\beta,\lambda}}^{1/2}\|\partial_{x}(v-I_{N,\lambda}^{\alpha,\beta,}v)\|_{0,\tilde{\omega}^{\alpha,\beta,\lambda}}^{1/2}\\[9pt]
&\leq c N^{1/2-m}\|\partial^{m}_{x}v(x^{1/\lambda})\|_{0,\omega^{\alpha+m,\beta+m,1}}.
\ery
This proves the proposition.
\end{proof}

We use the Jacobi Gauss points to define the discrete inner product: for any $u,v\in C(\bar I)$,
\beq
(u,v)_{N,\omega^{\alpha,\beta,1}}=\sum_{i=0}^{N}{\color{black}u}(z_{i})v(z_{i})\omega_{i},
\eeq
where $\{z_{i}\}_{i=0}^{N}$ are the zeros of the shifted Jacobi polynomial $J^{\alpha,\beta,1}_{N+1}(x)$ and $\{\omega_{i}\}_{i=0}^{N}$ are the corresponding weights.

\begin{lemma}[\cite{STW10,Ber97}]\label{lem3}
For all $v\in B^{m,1}_{\alpha,\beta}(I), m\geq 1$ and all $\phi\in P^{1}_{N}(I)$, we have
\begin{equation*}
|(v,\phi)_{\omega^{\alpha,\beta,1}}-(v,\phi)_{N,\omega^{\alpha,\beta,1}}|\leq cN^{-m}\|\partial_{x}^{m}v\|_{0,\omega^{\alpha+m,\beta+m,1}}\|\phi\|_{0,\omega^{\alpha,\beta,1}}.
\end{equation*}
\end{lemma}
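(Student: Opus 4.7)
The plan is to exploit the exactness of the $(N{+}1)$-point Jacobi-Gauss quadrature on polynomials of degree at most $2N{+}1$, and then reduce the error bound to an already proven interpolation estimate.

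First I would introduce the Jacobi-Gauss interpolant $I^{\alpha,\beta}_{N,1}v$ based on the zeros $\{z_i\}_{i=0}^N$ of $J^{\alpha,\beta,1}_{N+1}$, which interpolates $v$ at exactly the quadrature nodes. Since the discrete inner product $(\cdot,\cdot)_{N,\omega^{\alpha,\beta,1}}$ only samples values at these nodes, we automatically have
\begin{equation*}
(v,\phi)_{N,\omega^{\alpha,\beta,1}}=(I^{\alpha,\beta}_{N,1}v,\phi)_{N,\omega^{\alpha,\beta,1}}.
\end{equation*}
Now $I^{\alpha,\beta}_{N,1}v\cdot\phi\in P^{1}_{2N}(I)$, which lies inside the exactness range $P^{1}_{2N+1}(I)$ of the $(N{+}1)$-point Jacobi-Gauss rule, so the discrete and continuous inner products coincide on this product, giving
\begin{equation*}
(v,\phi)_{N,\omega^{\alpha,\beta,1}}=(I^{\alpha,\beta}_{N,1}v,\phi)_{\omega^{\alpha,\beta,1}}.
\end{equation*}

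Subtracting this identity from $(v,\phi)_{\omega^{\alpha,\beta,1}}$ and applying the Cauchy-Schwarz inequality yields
\begin{equation*}
\bigl|(v,\phi)_{\omega^{\alpha,\beta,1}}-(v,\phi)_{N,\omega^{\alpha,\beta,1}}\bigr|=\bigl|(v-I^{\alpha,\beta}_{N,1}v,\phi)_{\omega^{\alpha,\beta,1}}\bigr|\le \|v-I^{\alpha,\beta}_{N,1}v\|_{0,\omega^{\alpha,\beta,1}}\|\phi\|_{0,\omega^{\alpha,\beta,1}}.
\end{equation*}

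It remains to bound the interpolation error, which is exactly the $\lambda=1$, $l=0$ case of Proposition \ref{th3} (specifically \eqref{pequa6}): for any $v\in B^{m,1}_{\alpha,\beta}(I)$ with $m\geq 1$,
\begin{equation*}
\|v-I^{\alpha,\beta}_{N,1}v\|_{0,\omega^{\alpha,\beta,1}}\le cN^{-m}\|\partial_x^m v\|_{0,\omega^{\alpha+m,\beta+m,1}}.
\end{equation*}
Combining the two inequalities delivers the stated estimate. The only nontrivial ingredient is therefore the interpolation bound invoked at the end; the rest is just the standard ``projection-plus-exactness'' trick, so I do not expect any serious obstacle. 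One minor point to verify is simply that the hypothesis $\alpha,\beta>-1$ (implicit in the definition of $\omega^{\alpha,\beta,1}$) ensures both the integrability of the weight and the existence of the Gauss nodes; this is automatic in the setting of the paper.
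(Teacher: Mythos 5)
Your argument is correct. Note, however, that the paper itself offers no proof of this lemma: it is quoted directly from the references \cite{STW10,Ber97}, so there is no ``paper proof'' to match. What you wrote is essentially the standard argument behind the cited result, in a slightly streamlined form: you replace $v$ by its Jacobi--Gauss interpolant in the discrete inner product, invoke exactness of the $(N{+}1)$-point Gauss rule on $P^{1}_{2N+1}(I)$ (the product $I^{\alpha,\beta}_{N,1}v\cdot\phi$ has degree at most $2N$), and then Cauchy--Schwarz reduces everything to the interpolation estimate, i.e.\ the $\lambda=1$, $l=0$ case \eqref{pequa6} of Proposition \ref{th3}. The textbook proofs usually route the error through the $L^{2}_{\omega^{\alpha,\beta,1}}$-orthogonal projection $\pi_{N,\omega^{\alpha,\beta,1}}$ as well, splitting the error into a projection term and a discrete term $(\pi_{N}v-I^{\alpha,\beta}_{N,1}v,\phi)_{N,\omega^{\alpha,\beta,1}}$ and using that the discrete and continuous norms coincide on $P^{1}_{N}(I)$; your interpolant-only version avoids that extra step and loses nothing, since the final bound only needs the interpolation error. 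Two small points worth stating explicitly if you write this up: the nodes are interior Gauss points, so the pointwise values $v(z_i)$ are well defined for $v\in B^{m,1}_{\alpha,\beta}(I)$, $m\ge 1$ (consistent with how the paper defines the discrete inner product on $C(\bar I)$); and the exactness degree $2N+1$ is specific to the Gauss (not Gauss--Radau/Lobatto) points used here.
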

The following result, which can be found in \cite{Mas01},
concerns the Lebesgue constant of the Lagrange interpolation polynomials
associated with the zeros of shifted Jacobi polynomials.
\begin{lemma}\label{lem2}
Let $\{h^{\alpha,\beta}_{j,1}(x)\}_{j=0}^{N}$ be the Lagrange interpolation polynomials associated with the Gauss points of the shifted Jacobi polynomial $J^{\alpha,\beta,1}_{N+1}(x).$ Then
\begin{equation*}
\|I^{\alpha,\beta}_{N,1}\|_{\infty}:=\max_{x\in I}\sum_{j=0}^{N}|h_{j,1}^{\alpha,\beta}(x)|=
\begin{cases}
O(\log N),~~~-1<\alpha,\beta\leq -\frac{1}{2},\\
O(N^{\gamma+\frac{1}{2}}),~~~\gamma=\max(\alpha,\beta), otherwise.\\
\end{cases}
\end{equation*}
\end{lemma}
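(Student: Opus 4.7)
Since the Gauss nodes $z_0<z_1<\cdots<z_N$ are the zeros of $J^{\alpha,\beta,1}_{N+1}$, the Lagrange basis admits the closed form
\[
h^{\alpha,\beta}_{j,1}(x)=\frac{J^{\alpha,\beta,1}_{N+1}(x)}{(x-z_j)\,\partial_{x}J^{\alpha,\beta,1}_{N+1}(z_j)},\qquad 0\le j\le N,
\]
so my plan is to bound $|h^{\alpha,\beta}_{j,1}(x)|$ pointwise and then sum. The three classical ingredients I will invoke are: (i) Szegő-type pointwise estimates for $|J^{\alpha,\beta}_{N+1}|$ as a function of the distance to the endpoints $\pm 1$ (equivalently, to $0$ and $1$ after the affine shift defining $J^{\alpha,\beta,1}$); (ii) the asymptotics of $\partial_{x}J^{\alpha,\beta,1}_{N+1}(z_j)$, which can be read off from the Christoffel--Darboux/Gauss-quadrature relation $\omega_j\sim (1-z_j)^{\alpha+1/2}z_j^{\beta+1/2}/N$ combined with $|\partial_{x}J^{\alpha,\beta,1}_{N+1}(z_j)|^{-2}\sim \omega_j\,|J^{\alpha,\beta,1}_{N}(z_j)|^{-2}$; and (iii) the standard spacing of Jacobi-Gauss nodes, namely $z_{j+1}-z_j\sim N^{-1}\sqrt{z_j(1-z_j)}$ in the bulk, together with $z_0\sim N^{-2}$ and $1-z_N\sim N^{-2}$ at the endpoints.

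With these tools, I would partition $I$ into three regions: a left endpoint layer $[0,c/N^{2}]$, an interior bulk where $z_j(1-z_j)\gtrsim N^{-2}$, and a right endpoint layer $[1-c/N^{2},1]$. In each region I would substitute the region-specific Szegő bound for $|J^{\alpha,\beta,1}_{N+1}(x)|$ (which behaves like $N^{-1/2}$ in the bulk and like $N^{\max(\alpha,\beta)}$ near the appropriate endpoint), together with the derivative bound at $z_j$, to obtain a pointwise estimate of the form $|h^{\alpha,\beta}_{j,1}(x)|\lesssim \min\{1,\varphi_N(x,z_j)\}$ where the factor $\varphi_N$ decays like $(N|x-z_j|)^{-1}$ times appropriate endpoint weights. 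The Lebesgue function $\Lambda_N(x):=\sum_{j=0}^N |h^{\alpha,\beta}_{j,1}(x)|$ is then estimated by summing this pointwise bound, which reduces to a discrete harmonic-type sum $\sum_j \frac{1}{1+N|x-z_j|}$ modulated by endpoint weights.

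In the subcase $-1<\alpha,\beta\le -\tfrac12$, the endpoint weights $(1-z_j)^{\alpha+1/2}z_j^{\beta+1/2}$ are nonincreasing towards the endpoints, so the endpoint contribution is controlled by the bulk estimate; the harmonic-type sum produces the familiar $O(\log N)$ bound. In the remaining cases, one of the exponents $\alpha+1/2$ or $\beta+1/2$ is positive, and the corresponding endpoint layer dominates: inserting the Szegő bound $|J^{\alpha,\beta,1}_{N+1}(x)|\sim N^{\gamma}$ for $x$ within $O(N^{-2})$ of the appropriate endpoint, together with $|\partial_{x}J^{\alpha,\beta,1}_{N+1}(z_j)|\sim N^{\gamma+2}$ at the last/first node, yields a bound of order $N^{\gamma+1/2}$, which absorbs the $O(\log N)$ bulk contribution. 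Taking the maximum over $x\in I$ gives the stated dichotomy.

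The technical bottleneck that I expect to be genuinely delicate is matching the endpoint and bulk estimates across the transition scale $x\sim N^{-2}$ (respectively $1-x\sim N^{-2}$): both the Szegő asymptotic and the nodal spacing change regime there, and one must verify that the constants implicit in the ``$\sim$'' signs are uniform in $x$ across that transition so as to obtain the sharp exponent $\gamma+\tfrac12$ rather than a larger one. This is precisely the step where one usually appeals to the uniform Bernstein--Szegő estimates developed in \cite{Mas01}.
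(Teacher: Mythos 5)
You should first note that the paper does not prove this lemma at all: it is stated as a known result and attributed to \cite{Mas01}, so there is no internal argument to compare yours against. Your plan reconstructs the classical Szeg\H{o}--Mastroianni route (explicit Lagrange basis via $J^{\alpha,\beta,1}_{N+1}(x)/\bigl((x-z_j)\partial_x J^{\alpha,\beta,1}_{N+1}(z_j)\bigr)$, pointwise Szeg\H{o} bounds, Christoffel numbers and nodal spacing, splitting $I$ into two $O(N^{-2})$ endpoint layers and a bulk, and estimating the Lebesgue function by a weighted harmonic-type sum), and those are indeed the right ingredients; in that sense the outline is sound and consistent with how the cited literature proves the result.

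Two points keep this from being a proof rather than a plan. First, your explanation of where $N^{\gamma+\frac12}$ comes from is off: for $x$ in the endpoint layer, the single extreme-node term with $|J^{\alpha,\beta,1}_{N+1}(x)|\sim N^{\gamma}$ and $|\partial_x J^{\alpha,\beta,1}_{N+1}(z_j)|\sim N^{\gamma+2}$, $|x-z_j|\lesssim N^{-2}$, is only $O(1)$. The growth of order $N^{\gamma+\frac12}$ is produced by summing over the \emph{bulk} nodes while $x$ sits near the endpoint: e.g.\ in the Legendre case $\alpha=\beta=0$, at $x=1$ one has $|h_j(1)|\sim j^{-1/2}$ for the $j$-th node counted from that endpoint, and $\sum_j j^{-1/2}\sim N^{1/2}$. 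So the endpoint layer does not ``absorb'' a logarithmic bulk term; rather the bulk sum itself, evaluated at endpoint $x$, is the dominant contribution, and the correct exponent depends on carrying the weights $(1-z_j)^{-\alpha/2-3/4}z_j^{-\beta/2-3/4}$ through that sum. Second, you yourself flag the uniform matching of the Szeg\H{o} asymptotics and the nodal spacing across the transition scale $x\sim N^{-2}$ as the delicate step and leave it unresolved; that is exactly the technical content supplied by \cite{Mas01}, and without it (or an explicit appeal to that reference, which is what the paper does) the argument is incomplete. As written, then, your proposal is a correct road map for the standard proof but not a self-contained proof, and the imprecise attribution of the $N^{\gamma+\frac12}$ term would need to be fixed before the sharp exponent can be claimed.
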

The above classical results can be readily extended to the generalized Lagrange interpolation polynomials
associated with the zeros of fractional Jacobi polynomials, which is given in the following lemma.
\begin{lemma}\label{lemm1}
Let $\{h^{\alpha,\beta}_{j,\lambda}(x)\}_{j=0}^{N}$ be the generalized Lagrange interpolation basis functions associated with the Gauss points of the fractional Jacobi polynomial $J^{\alpha,\beta,\lambda}_{N+1}(x).$ Then
\begin{equation*}
\|I^{\alpha,\beta}_{N,\lambda}\|_{\infty}
:=\max_{x\in I}\sum_{j=0}^{N}|h_{j,\lambda}^{\alpha,\beta}(x)|
=
\begin{cases}
O(\log N),~~~-1<\alpha,\beta\leq -\frac{1}{2},\\
O(N^{\gamma+\frac{1}{2}}),~~~\gamma=\max(\alpha,\beta), otherwise.\\
\end{cases}
\end{equation*}
\end{lemma}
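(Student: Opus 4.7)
The plan is to reduce the fractional case directly to the classical (shifted Jacobi) case already handled in Lemma \ref{lem2}, by exploiting the identity \eqref{equ6} together with the change of variable $z=x^{\lambda}$.

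First I would recall from \eqref{equ6} that the generalized Lagrange basis functions satisfy
\[
h^{\alpha,\beta}_{j,\lambda}(x)=h^{\alpha,\beta}_{j,1}(x^{\lambda}),\qquad 0\le j\le N,\ x\in I.
\]
This already expresses each fractional basis function as the corresponding classical shifted-Jacobi basis function evaluated at $z=x^{\lambda}$, so pointwise the sum $\sum_{j=0}^{N}|h^{\alpha,\beta}_{j,\lambda}(x)|$ equals $\sum_{j=0}^{N}|h^{\alpha,\beta}_{j,1}(x^{\lambda})|$.

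Next I would note that, since $0<\lambda\le 1$, the map $x\mapsto x^{\lambda}$ is a continuous bijection of $I=[0,1]$ onto itself. Hence the supremum over $x\in I$ of a function of $x^{\lambda}$ coincides with the supremum over $z\in I$ of the same function of $z$, giving
\[
\|I^{\alpha,\beta}_{N,\lambda}\|_{\infty}=\max_{x\in I}\sum_{j=0}^{N}|h^{\alpha,\beta}_{j,1}(x^{\lambda})|=\max_{z\in I}\sum_{j=0}^{N}|h^{\alpha,\beta}_{j,1}(z)|=\|I^{\alpha,\beta}_{N,1}\|_{\infty}.
\]
Finally I would invoke Lemma \ref{lem2} to obtain the claimed $O(\log N)$ bound in the range $-1<\alpha,\beta\le -\tfrac12$ and the $O(N^{\gamma+1/2})$ bound with $\gamma=\max(\alpha,\beta)$ otherwise.

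There is essentially no obstacle here; the only subtlety worth spelling out is the bijectivity of $x\mapsto x^{\lambda}$ on $[0,1]$, which guarantees that taking the supremum after the substitution does not shrink the effective domain. Everything else is a transcription of Lemma \ref{lem2}.
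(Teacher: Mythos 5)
Your proposal is correct and follows exactly the route the paper intends: using \eqref{equ6} and the fact that $x\mapsto x^{\lambda}$ is a bijection of $I$ onto itself, the fractional Lebesgue constant equals the classical one, and Lemma \ref{lem2} then gives the stated bounds. The paper simply asserts that the classical result "can be readily extended"; your argument is the straightforward substitution that justifies that assertion.
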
\par
From now on, for $r\geq0$ and  $\kappa\in [0,1],$ $C^{r,\kappa}(I)$ will denote the space of functions whose
$r-$th derivatives are H\"{o}lder continuous with exponent $\kappa$, endowed with the usual norm:
\begin{equation*}
\|v\|_{r,\kappa}=\max_{0\leq i\leq r}\max_{x\in I}|\partial_{x}^{i}v(x)|+\max_{0\leq i\leq r}\sup_{x,y\in I,x\neq y}\frac{| \partial_{x}^{i}v(x)-\partial_{x}^{i}v(y)|}{|x-y|^{\kappa}}.
\end{equation*}
When $\kappa=0, C^{r,0}(I)$ denotes the space of functions with $r$ continuous derivatives on $I$,
which is also commonly denoted by $C^{r}(I)$, endowed with the norm $\|\cdot\|_{r}$.\par
  A well-known result from Ragozin \cite{Rag70,Rag71} will be useful, which states that,
  for non-negative integer $r$ and real number $\kappa\in (0,1)$, there exists a linear operator {\color{black}$\mathcal{T}_{N}$}
  from $C^{r,\kappa}(I)$ into $P_{N}^{1}(I)$,
such that
\begin{equation}\label{equat2}
\|v-\mathcal{T}_{N}v\|_{\infty}\leq C_{r,\kappa}N^{-(r+\kappa)}\|v\|_{r,\kappa}, \ \ v\in C^{r,\kappa}(I),
\end{equation}
where $C_{r,\kappa}$ is a constant which may depend on $r$ and $\kappa$.

We further define the linear, weakly singular integral operator $\mathcal{K}$:
\begin{equation}\label{equ8}
\big(\mathcal{K}v\big)(x)=\int_{0}^{x}(x-s)^{-\mu}K(x,s)v(s)ds,
\end{equation}
where $K\in C(I\times I)$ with $K(x,x)\neq 0$ for $x\in I$.
We will prove that $\mathcal{K}$ is a compact operator from $C(I)$ to $C^{0,\kappa}(I)$ for any $0<\kappa<1-\mu.$

\begin{lemma}
For any function $v\in C(I)$, {\color{black}$K\in C(I\times I)$, and $K(\cdot,s)\in C^{0,\kappa}(I)$ with} $0<\kappa<1-\mu$, we have
\begin{equation}\label{equ7}
\frac{|\big(\mathcal{K}v\big)(x)-\big(\mathcal{K}v\big)(y)|}{|x-y|^{\kappa}}\leq c \max_{x\in I}|v(x)|, ~~~\forall x,y\in I,~x\neq y.
\end{equation}
This implies that
\begin{equation*}
\|\mathcal{K}v\|_{0,\kappa}\leq c\|v\|_{\infty},~~~0<\kappa<1-\mu.
\end{equation*}
\end{lemma}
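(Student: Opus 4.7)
The plan is to bound $|(\mathcal{K}v)(x)-(\mathcal{K}v)(y)|$ for $0\le y<x\le 1$ (the case $y>x$ is symmetric) by splitting the difference in the standard way for Abel-type integrals:
\begin{equation*}
(\mathcal{K}v)(x)-(\mathcal{K}v)(y)=\int_{0}^{y}\bigl[(x-s)^{-\mu}K(x,s)-(y-s)^{-\mu}K(y,s)\bigr]v(s)\,ds+\int_{y}^{x}(x-s)^{-\mu}K(x,s)v(s)\,ds.
\end{equation*}
The first integrand is further decomposed as
\begin{equation*}
\bigl[(x-s)^{-\mu}-(y-s)^{-\mu}\bigr]K(x,s)+(y-s)^{-\mu}\bigl[K(x,s)-K(y,s)\bigr],
\end{equation*}
so the whole difference is a sum of three pieces, $E_{1}+E_{2}+E_{3}$, each of which I will control by $c\,\|v\|_{\infty}\,|x-y|^{\kappa}$.

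For the boundary piece, I would use $\|K\|_{\infty}<\infty$ (since $K\in C(I\times I)$) to get
\begin{equation*}
|E_{3}|\le\|K\|_{\infty}\|v\|_{\infty}\int_{y}^{x}(x-s)^{-\mu}ds=\frac{\|K\|_{\infty}\|v\|_{\infty}}{1-\mu}(x-y)^{1-\mu}.
\end{equation*}
For the $K$-Hölder piece, the hypothesis $K(\cdot,s)\in C^{0,\kappa}(I)$ uniformly in $s$ gives $|K(x,s)-K(y,s)|\le M_{K}|x-y|^{\kappa}$ with $M_{K}:=\sup_{s\in I}\|K(\cdot,s)\|_{0,\kappa}<\infty$, whence
\begin{equation*}
|E_{2}|\le M_{K}\|v\|_{\infty}|x-y|^{\kappa}\int_{0}^{y}(y-s)^{-\mu}ds\le\frac{M_{K}\|v\|_{\infty}}{1-\mu}\,|x-y|^{\kappa}.
\end{equation*}
For the kernel-difference piece, I note that for $0\le s<y<x$ we have $(y-s)^{-\mu}\ge(x-s)^{-\mu}$, so
\begin{equation*}
\int_{0}^{y}\bigl|(x-s)^{-\mu}-(y-s)^{-\mu}\bigr|ds=\frac{1}{1-\mu}\bigl[y^{1-\mu}-x^{1-\mu}+(x-y)^{1-\mu}\bigr]\le\frac{(x-y)^{1-\mu}}{1-\mu},
\end{equation*}
using $y^{1-\mu}\le x^{1-\mu}$. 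Hence $|E_{1}|\le \|K\|_{\infty}\|v\|_{\infty}(x-y)^{1-\mu}/(1-\mu)$.

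Finally, since $|x-y|\le 1$ and $1-\mu>\kappa$, one has $(x-y)^{1-\mu}=(x-y)^{\kappa}(x-y)^{1-\mu-\kappa}\le(x-y)^{\kappa}$, so summing the three bounds yields \eqref{equ7} with $c$ depending only on $\mu,\kappa,\|K\|_{\infty},M_{K}$. Combining with the trivial bound $\|\mathcal{K}v\|_{\infty}\le\|K\|_{\infty}\|v\|_{\infty}/(1-\mu)$ then gives $\|\mathcal{K}v\|_{0,\kappa}\le c\|v\|_{\infty}$. The only mildly delicate point is the handling of $E_{1}$: one must observe the sign of $(x-s)^{-\mu}-(y-s)^{-\mu}$ to remove the absolute value and obtain a telescoping integral; otherwise the bound is immediate. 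No step relies on smoothness of $v$ beyond continuity, which is consistent with $\mathcal{K}$ being smoothing from $C(I)$ into $C^{0,\kappa}(I)$.
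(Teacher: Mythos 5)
Your proposal is correct and follows essentially the same route as the paper: split the difference into the boundary integral over $[y,x]$ plus the interior difference over $[0,y]$, add and subtract to separate the singular-factor difference from the H\"older difference of $K$, use the telescoping computation of $\int_{0}^{y}\bigl[(y-s)^{-\mu}-(x-s)^{-\mu}\bigr]ds$, and absorb $(x-y)^{1-\mu}$ into $(x-y)^{\kappa}$ via $\kappa<1-\mu$. The only differences are cosmetic (which factor is paired with which difference in the add-and-subtract, and your explicit sup-norm bound on $\mathcal{K}v$ for the $C^{0,\kappa}$ conclusion, which the paper leaves implicit).
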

\begin{proof}
Without loss of generality, assume $0\leq y< x\leq1$. We have
\bearr
\dps\frac{|\big(\mathcal{K}v\big)(x)-\big(\mathcal{K}v\big)(y)|}{|x-y|^{\kappa}}
&\dps=(x-y)^{-\kappa}\Big |\int_{0}^{y}(y-s)^{-\mu}K(y,s)v(s)ds-\int_{0}^{x}(x-s)^{-\mu}K(x,s)v(s){\color{black}ds}\Big |\\[9pt]
&\leq M_{1}+M_{2},\\
\eearr
where
\bearr
&M_{1}=\dps(x-y)^{-\kappa}\int_{0}^{y}|(y-s)^{-\mu}K(y,s)-(x-s)^{-\mu}K(x,s)| |v(s)|ds,\\[9pt]
&M_{2}=\dps(x-y)^{-\kappa}\int_{y}^{x}(x-s)^{-\mu}|K(x,s)||v(s)|ds.\\
\eearr
For $M_1$, we use the triangle inequality to deduce
\bq\label{ppeq2}
M_{1}\leq M^{(1)}+M^{(2)},
\eq
where
\bearr
&M^{(1)}\dps=(x-y)^{-\kappa}\int_{0}^{y}|(y-s)^{-\mu}-(x-s)^{-\mu}| |K(y,s)||v(s)|ds,\\[9pt]
&M^{(2)}\dps=(x-y)^{-\kappa}\int_{0}^{y}(x-s)^{-\mu}|K(y,s)-K(x,s)||v(s)|ds.\\
\eearr
$M^{(1)}$ and $M^{(2)}$ can be bounded respectively by
\bearr
M^{(1)}
&\dps\leq c\|v\|_{\infty}(x-y)^{-\kappa}\Big[\int_{0}^{y}(y-s)^{-\mu}ds-\int_{0}^{x}(x-s)^{-\mu}ds+\int_{y}^{x}(x-s)^{-\mu}ds\Big]\\[9pt]
&\dps\leq c\|v\|_{\infty}(x-y)^{-\kappa}\Big[y^{1-\mu}\int_{0}^{1}(1-\tau)^{-\mu}d\tau
-x^{1-\mu}\int_{0}^{1}(1-\tau)^{-\mu}
d\tau\\[9pt]
&\dps\ \ \ \ +(x-y)^{1-\mu}\int_{0}^{1}(1-\tau)^{-\mu}d\tau\Big]\\[9pt]
&\leq c B(1-\mu,1)(x-y)^{1-\mu-\kappa}\|v\|_{\infty}\leq c\|v\|_{\infty},\\[9pt]
M^{(2)}&=\dps\int_{0}^{y}(x-s)^{-\mu}\frac{|K(y,s)-K(x,s)|}{(x-y)^{\kappa}}|v(s)|ds\\[9pt]
&\dps\leq c\max_{s\in I}\|K(\cdot,s)\|_{0,\kappa}\|v\|_{\infty}\int_{0}^{x}(x-s)^{-\mu}ds\\[9pt]
&\leq c\|v\|_{\infty}B(1-\mu,1)x^{1-\mu}\leq c\|v\|_{\infty},
\eearr
For $M_2,$ we have
\bex
M_{2}\dps\leq c\|v\|_{\infty}(x-y)^{-\kappa}\int_{y}^{x}(x-s)^{-\mu}ds\leq c\|v\|_{\infty}(x-y)^{1-\mu-\kappa}\int_{0}^{1}(1-\tau)^{-\mu}d\tau\leq c\|v\|_{\infty}.
\eex
Gathering all these estimates thus gives \eqref{equ7}.
\end{proof}

We can also prove the following result.
\begin{lemma}\label{lemm2}
For any function $v(x)\in C(I),$ {\color{black}$K\in C(I\times I)$, and $K(\cdot,s)\in C^{0,\kappa}(I)$ with} $0<\kappa<1-\mu$, there exists a positive constant $c$ such that
\bq\label{rev_eq3}
\frac{|\big(\mathcal{K}v\big)(x^{1/\lambda})-\big(\mathcal{K}v\big)(y^{1/\lambda})|}{|x-y|^{\kappa}}\leq c \max_{x\in I}|v(x)|, ~~~\forall x,y\in I,~x\neq y.
\eq
Thus
\begin{equation*}
\|\big(\mathcal{K}v\big)(x^{\frac{1}{\lambda}})\|_{0,\kappa}\leq c\|v\|_{\infty}.
\end{equation*}
\end{lemma}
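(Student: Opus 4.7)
The plan is to reduce Lemma 2.13 directly to the previous lemma by exploiting the Lipschitz character of the change of variable $x \mapsto x^{1/\lambda}$ on $I=[0,1]$ when $0<\lambda\le 1$. More precisely, the previous lemma already gives, under the stated hypotheses on $K$ and $v$,
\begin{equation*}
|(\mathcal{K}v)(u)-(\mathcal{K}v)(w)| \le c\,\|v\|_\infty\,|u-w|^{\kappa},\qquad \forall u,w\in I,
\end{equation*}
so if I set $u=x^{1/\lambda}$ and $w=y^{1/\lambda}$, the whole task reduces to controlling $|x^{1/\lambda}-y^{1/\lambda}|$ by $|x-y|$ on $I$.

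For the second step I would invoke the mean value theorem applied to $\phi(t)=t^{1/\lambda}$. Since $0<\lambda\le 1$, the exponent satisfies $1/\lambda\ge 1$, so $\phi'(t)=\lambda^{-1} t^{1/\lambda-1}$ is well-defined and uniformly bounded on $I$ by $\lambda^{-1}$. Therefore
\begin{equation*}
|x^{1/\lambda}-y^{1/\lambda}|\le \lambda^{-1}\,|x-y|,\qquad \forall x,y\in I.
\end{equation*}
Raising to the $\kappa$-th power and combining with the Hölder estimate from the preceding lemma yields
\begin{equation*}
\frac{|(\mathcal{K}v)(x^{1/\lambda})-(\mathcal{K}v)(y^{1/\lambda})|}{|x-y|^{\kappa}} \le c\,\|v\|_\infty \,\frac{|x^{1/\lambda}-y^{1/\lambda}|^{\kappa}}{|x-y|^{\kappa}} \le c\,\lambda^{-\kappa}\,\|v\|_\infty,
\end{equation*}
which is exactly \eqref{rev_eq3} (absorbing $\lambda^{-\kappa}$ into $c$).

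For the full $C^{0,\kappa}(I)$ norm bound, it remains only to control the sup-norm of $(\mathcal{K}v)(x^{1/\lambda})$. This is immediate: since $x\mapsto x^{1/\lambda}$ maps $I$ into $I$,
\begin{equation*}
\max_{x\in I}|(\mathcal{K}v)(x^{1/\lambda})| \le \max_{u\in I}|(\mathcal{K}v)(u)| \le c\,\|v\|_\infty,
\end{equation*}
where the last inequality follows from the previous lemma (or a direct estimate on $\mathcal{K}$). Adding the two contributions gives $\|(\mathcal{K}v)(x^{1/\lambda})\|_{0,\kappa}\le c\|v\|_\infty$.

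There is no real obstacle here; the only point that requires attention is the one flagged by the referee's comment (13), namely that one must not attempt to compare $|x^{1/\lambda}-y^{1/\lambda}|^{\kappa}$ with $|x-y|^{\kappa}$ in the reverse direction (which fails near the origin when $\lambda<1$). My approach uses only the forward direction, which is valid precisely because $1/\lambda\ge 1$, so the map is Lipschitz and the argument goes through cleanly.
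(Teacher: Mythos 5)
Your proposal is correct and follows essentially the same route as the paper: both reduce the claim to the preceding lemma and then compare $|x^{1/\lambda}-y^{1/\lambda}|^{\kappa}$ with $|x-y|^{\kappa}$, using that $t\mapsto t^{1/\lambda}$ maps $I$ into $I$ and is Lipschitz there since $1/\lambda\ge 1$. Your mean value theorem step with the explicit constant $\lambda^{-1}$ is simply a cleaner justification of the comparison that the paper states more loosely, and your separate bound on the sup-norm part of $\|\cdot\|_{0,\kappa}$ is a harmless addition.
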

{\color{black}
\begin{proof}
It follows from \eqref{equ7} that
\bq \label{rev_eq1}
\dps\frac{|\big(\mathcal{K}v\big)(x^{1/\lambda})-\big(\mathcal{K}v\big)(y^{1/\lambda})|}{|x^{1/\lambda}-y^{1/\lambda}|^{\kappa}}\leq c \max_{x\in I}|v(x)|, ~~~\forall x,y\in I,~x\neq y.
\eq
Noticing that, for any $x,y\in I, x\neq y, 0<\lambda\leq1$, we have
\begin{equation*}
|x^{\frac{1}{\lambda}}-y^{\frac{1}{\lambda}}|^{\kappa}\sim O(|x-y|^{\kappa}),\mbox{ or }|x^{\frac{1}{\lambda}}-y^{\frac{1}{\lambda}}|^{\kappa}\sim o(|x-y|^{\kappa}).
\end{equation*}
Thus we obtain
\bq\label{rev_eq2}
\dps\frac{|\big(\mathcal{K}v\big)(x^{1/\lambda})-\big(\mathcal{K}v\big)(y^{1/\lambda})|}{|x-y|^{\kappa}}\leq c\frac{|\big(\mathcal{K}v\big)(x^{1/\lambda})-\big(\mathcal{K}v\big)(y^{1/\lambda})|}{|x^{1/\lambda}-y^{1/\lambda}|^{\kappa}}.
\eq
Combining with \eqref{rev_eq1} and \eqref{rev_eq2} gives \eqref{rev_eq3}. Thus we complete the proof.
\end{proof}}

\section{Fractional Jacobi Spectral-Collocation Method and Convergence Analysis}
\setcounter{equation}{0}

This section is devoted to developing and analyzing an efficient method for
the following equation: $0<\mu<1$,
\begin{equation}\label{eq1}
u(x)=g(x)+ \big(\mathcal{K}u\big)(x),~~x\in I:=[0,1],
\end{equation}
where $\mathcal{K}(\cdot)$ is the weakly singular integral operator defined in \eqref{equ8},
$g(x)\in C(I)$, $K\in C(I\times I), K(x,x)\neq 0$ for $x\in I$.

\subsection{Fractional Jacobi Spectral-Collocation Method for VIEs}

We consider the fractional Jacobi spectral-collocation method as follows:
find fractional polynomial $u_N^\lambda\in P^{\lambda}_{N}(I)$,
such that
\begin{equation*}
u_N^\lambda(x_{i})
=g(x_{i})+\big(\mathcal{K}u_N^\lambda\big)(x_{i}),~~~~0\leq i\leq N,
\end{equation*}
where the collocation points $\{x_{i}\}_{i=0}^{N}$ are roots of $J^{\alpha,\beta,\lambda}_{N+1}(x)$.

Since the exact evaluation of $\big(\mathcal{K}\varphi\big)(x_i)$ is not realizable in pratical cases,
we need to find an efficient way to approximate this term for all
points $\{x_{i}\}_{i=0}^{N}$ without losing the high order accuracy of the scheme.
For a given collocation point $x_i$, we rewrite the integral term by using the variable change
$s=\tau_{i}(\theta):=x_{i}\theta^{\frac{1}{\lambda}}$:
\begin{equation*}
\begin{array}{r@{}l}
\big(\mathcal{K}\varphi\big)(x_{i})&=\dps\int_{0}^{x_{i}}(x_{i}-s)^{-\mu}K(x_{i},s)\varphi(s)ds\\[9pt]
&=\dps\frac{x_{i}^{1-\mu}}{\lambda}\int_{0}^{1}(1-\theta^{\frac{1}{\lambda}})^{-\mu}\theta^{\frac{1}{\lambda}-1}K(x_{i},\tau_{i}(\theta))
\varphi(\tau_{i}(\theta))d\theta.\\[9pt]
&=\dps\frac{x_{i}^{1-\mu}}{\lambda}\int_{0}^{1}(1-\theta)^{-\mu}\theta^{1/\lambda-1}
\Big(\frac{1-\theta^{1/\lambda}}{1-\theta}\Big)^{-\mu}K(x_{i},\tau_{i}(\theta))\varphi(\tau_{i}(\theta))d\theta\\[9pt]
&=\dps \frac{x_{i}^{1-\mu}}{\lambda}\Big(\Big(\frac{1-\theta^{1/\lambda}}{1-\theta}\Big)^{-\mu}K(x_{i},\tau_{i}(\theta)),\varphi(\tau_{i}(\theta))\Big)_{\omega^{-\mu,1/\lambda-1,1}}.
\end{array}
\end{equation*}
Let
\be\label{Kb}
\dps \bar{K}(x_{i},\tau_{i}(\theta))=\frac{x_{i}^{1-\mu}}{\lambda}\Big(\frac{1-\theta^{1/\lambda}}{1-\theta}\Big)^{-\mu}K(x_{i},\tau_{i}(\theta)).
\ee
Then
\bex
\big(\mathcal{K}\varphi\big)(x_{i})
=
(\bar{K}(x_{i},\tau_{i}(\theta)),\varphi(\tau_{i}(\theta)))_{\omega^{-\mu,1/\lambda-1,1}}.
\eex
We choose to approximate the integral $(\bar{K}(x_{i},\tau_{i}(\theta)),\varphi(\tau_{i}(\theta)))_{\omega^{-\mu,1/\lambda-1,1}}$ by
the numerical quandrature $(\bar{K}(x_{i},\tau_{i}(\theta)),\varphi(\tau_{i}(\theta)))_{N,\omega^{-\mu,1/\lambda-1,1}}$, which is defined by
\begin{equation}\label{quad}
(\bar{K}(x_{i},\tau_{i}(\theta)),\varphi(\tau_{i}(\theta)))_{N,\omega^{-\mu,1/\lambda-1,1}}
:=\sum_{j=0}^{N}\bar{K}(x_{i},\tau_{i}(\theta_{j}))\varphi(\tau_{i}(\theta_{j}))\omega_{j},
\end{equation}
where $\{\theta_{j}\}_{j=0}^{N}$ are the zeros of $J^{-\mu,1/\lambda-1,1}_{N+1}(\theta)$ and $\{\omega_{j}\}_{j=0}^{N}$
are the corresponding weights. For the sake of simplification, we will denote
\be\label{MN}
\big(\mathcal{K}_N\varphi\big)(x)
:=
(\bar{K}(x,\tau_{i}(\theta)),\varphi(\tau_{i}(\theta)))_{N,\omega^{-\mu,1/\lambda-1,1}}.
\ee
This leads us to consider the following discrete problem: find
$u_N^{\lambda}(x):=\sum_{i=0}^{N}u_{i}h_{i,\lambda}^{\alpha,\beta}(x)\in P^{\lambda}_{N}(I)$, such that
\begin{equation}\label{equa1}
u_N^\lambda(x_{i})
=g(x_{i})
+
\big(\mathcal{K}_Nu_N^{\lambda}\big)(x_i),~~~~0\leq i\leq N,
\end{equation}
where $\{h_{i,\lambda}^{\alpha,\beta}\}_{i=0}^N$ are the fractional Lagrange polynomials defined in \eqref{eq4}.


\subsection{Convergence analysis}

The purpose of this subsection is to analyze the discrete problem \eqref{equa1} and derive error estimates for the
discrete solution.
First we derive the error estimate in the $L^{\infty}-$norm.
\begin{theorem}\label{the1}
Let $u(x)$ be the exact solution to the Volterra integral equation \eqref{eq1} and
$u_N^{\lambda}(x)$ is the numerical solution of the fractional Jacobi spectral-collocation problem \eqref{equa1}.
Assume $0<\mu<1, -1<\alpha,\beta\leq-\frac{1}{2}$, $K(x,s)\in C^{m}(I,I)$ and $u(x^{\frac{1}{\lambda}})\in B^{m,1}_{\alpha,\beta}(I), m\geq 1$.
Then we have
\begin{equation}\label{ex4}
\|u-u_N^{\lambda}\|_{\infty}\leq
c N^{\frac{1}{2}-m}(\|\partial_{x}^{m}u(x^{\frac{1}{\lambda}})\|_{0,\omega^{\alpha+m,\beta+m,1}}+N^{-\frac{1}{2}}\log N K^{*}\|u\|_{\infty}).
\end{equation}
where
\begin{equation}\label{equati2}
K^{*}=\max_{0\leq i\leq N}\|\partial_{\theta}^{m}\bar{K}(x_{i},\tau_{i}(\cdot))\|_{0,\omega^{m-\mu,m+1/\lambda-1,1}}.
\end{equation}
\end{theorem}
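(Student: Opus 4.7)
The plan is to derive an error equation from the collocation conditions and the exact equation, then control each resulting term via the approximation results of Section~2 together with the weakly singular Gr\"onwall inequality. First I would evaluate \eqref{eq1} at the collocation points $\{x_i\}_{i=0}^N$, subtract \eqref{equa1}, multiply by the generalized Lagrange basis $h^{\alpha,\beta}_{i,\lambda}(x)$, and sum over $i$. Using $u_N^\lambda=I^{\alpha,\beta}_{N,\lambda}u_N^\lambda$ (valid since $u_N^\lambda\in P^\lambda_N(I)$) and splitting $\mathcal{K}u-\mathcal{K}_N u_N^\lambda=\mathcal{K}e+(\mathcal{K}u_N^\lambda-\mathcal{K}_N u_N^\lambda)$ with $e:=u-u_N^\lambda$, this produces
\begin{equation*}
e=(u-I^{\alpha,\beta}_{N,\lambda}u)+I^{\alpha,\beta}_{N,\lambda}\mathcal{K}e+I^{\alpha,\beta}_{N,\lambda}(\mathcal{K}u_N^\lambda-\mathcal{K}_N u_N^\lambda).
\end{equation*}
Rewriting as $(I-\mathcal{K})e=R$ with
\begin{equation*}
R=(u-I^{\alpha,\beta}_{N,\lambda}u)+(I^{\alpha,\beta}_{N,\lambda}\mathcal{K}e-\mathcal{K}e)+I^{\alpha,\beta}_{N,\lambda}(\mathcal{K}u_N^\lambda-\mathcal{K}_N u_N^\lambda),
\end{equation*}
I would then apply the classical weakly singular Gr\"onwall lemma to $|e(x)|\le|R(x)|+c\int_0^x(x-s)^{-\mu}|e(s)|\,ds$ to obtain $\|e\|_\infty\le c\|R\|_\infty$; it thus remains to bound each of the three pieces of $R$ in $L^\infty$.

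The first piece is controlled directly by Proposition~\ref{th4}, yielding the leading term $cN^{1/2-m}\|\partial_x^m u(x^{1/\lambda})\|_{0,\omega^{\alpha+m,\beta+m,1}}$. For the third (quadrature) piece, the key observation is that $u_N^\lambda\circ\tau_i(\theta)=u_N^\lambda(x_i\theta^{1/\lambda})$ is an ordinary polynomial of degree $N$ in $\theta$ whenever $u_N^\lambda\in P^\lambda_N(I)$, so Lemma~\ref{lem3} applies with $v=\bar K(x_i,\tau_i(\cdot))$ and $\phi=u_N^\lambda\circ\tau_i$; combined with the obvious bound $\|u_N^\lambda\circ\tau_i\|_{0,\omega^{-\mu,1/\lambda-1,1}}\le c\|u_N^\lambda\|_\infty\le c(\|u\|_\infty+\|e\|_\infty)$ (the weight is integrable) this gives
\begin{equation*}
|\mathcal{K}u_N^\lambda(x_i)-\mathcal{K}_N u_N^\lambda(x_i)|\le cN^{-m}K^*(\|u\|_\infty+\|e\|_\infty),
\end{equation*}
and the Lebesgue constant estimate of Lemma~\ref{lemm1} (giving $\|I^{\alpha,\beta}_{N,\lambda}\|_\infty=O(\log N)$ for $-1<\alpha,\beta\le-1/2$) promotes this pointwise bound to the interpolant norm $c\log N\cdot N^{-m}K^*(\|u\|_\infty+\|e\|_\infty)$.

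The main obstacle is the second piece, $I^{\alpha,\beta}_{N,\lambda}\mathcal{K}e-\mathcal{K}e$, because $\mathcal{K}e\notin P^\lambda_N(I)$ in general and $I^{\alpha,\beta}_{N,\lambda}$ only reproduces $\lambda$-polynomials. The trick I would use is to transfer the classical H\"older estimates of Section~2 to $\lambda$-polynomial approximation. By Lemma~\ref{lemm2}, $(\mathcal{K}e)(x^{1/\lambda})$ belongs to $C^{0,\kappa}(I)$ with $\|(\mathcal{K}e)(x^{1/\lambda})\|_{0,\kappa}\le c\|e\|_\infty$ for any $0<\kappa<1-\mu$. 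Ragozin's result \eqref{equat2} then supplies some $p\in P^1_N(I)$ with $\|(\mathcal{K}e)(x^{1/\lambda})-p(x)\|_\infty\le cN^{-\kappa}\|e\|_\infty$, and lifting via $q(x):=p(x^\lambda)\in P^\lambda_N(I)$ yields $\|\mathcal{K}e-q\|_\infty\le cN^{-\kappa}\|e\|_\infty$. Since $I^{\alpha,\beta}_{N,\lambda}q=q$, I would then deduce
\begin{equation*}
\|I^{\alpha,\beta}_{N,\lambda}\mathcal{K}e-\mathcal{K}e\|_\infty=\|I^{\alpha,\beta}_{N,\lambda}(\mathcal{K}e-q)-(\mathcal{K}e-q)\|_\infty\le(1+\|I^{\alpha,\beta}_{N,\lambda}\|_\infty)\|\mathcal{K}e-q\|_\infty\le c\log N\cdot N^{-\kappa}\|e\|_\infty.
\end{equation*}

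Assembling the three estimates in $\|e\|_\infty\le c\|R\|_\infty$ leaves the residual contributions $c\log N\cdot N^{-\kappa}\|e\|_\infty$ and $c\log N\cdot N^{-m}K^*\|e\|_\infty$ on the right-hand side, but both tend to zero as $N\to\infty$, so for $N$ sufficiently large they can be absorbed into the left-hand side. What survives is exactly $cN^{1/2-m}\|\partial_x^m u(x^{1/\lambda})\|_{0,\omega^{\alpha+m,\beta+m,1}}+cN^{-m}\log N\cdot K^*\|u\|_\infty$, which is the bound \eqref{ex4}. The delicate point throughout is the second step: controlling $I^{\alpha,\beta}_{N,\lambda}\mathcal{K}e-\mathcal{K}e$ in a way compatible with the $\lambda$-polynomial approximation space forces the use of the H\"older regularity of the composed function $(\mathcal{K}e)(x^{1/\lambda})$ rather than of $\mathcal{K}e$ itself, which is the reason Lemma~\ref{lemm2} was proved in that particular form.
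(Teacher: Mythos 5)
Your proposal is correct and follows essentially the same route as the paper's proof: the same error equation and three-term decomposition $e=\mathcal{K}e+(u-I^{\alpha,\beta}_{N,\lambda}u)+(I^{\alpha,\beta}_{N,\lambda}\mathcal{K}e-\mathcal{K}e)+I^{\alpha,\beta}_{N,\lambda}(\mathcal{K}u_N^{\lambda}-\mathcal{K}_N u_N^{\lambda})$, the weakly singular Gr\"onwall bound, Proposition \ref{th4} for the interpolation error, Lemma \ref{lem3} plus the Lebesgue constant of Lemma \ref{lemm1} for the quadrature term, and Lemma \ref{lemm2} with Ragozin's operator \eqref{equat2} for the remaining term. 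The only cosmetic differences are that the paper handles $I_3$ by passing to $z=x^{\lambda}$ and the classical interpolant $I^{\alpha,\beta}_{N,1}$ rather than lifting the Ragozin polynomial to a $\lambda$-polynomial, and that you make explicit the final absorption of the $\|e\|_\infty$ terms, which the paper leaves implicit.
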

\begin{proof}
Let $e(x)=u(x)-u_N^{\lambda}(x)$ be the error function.
Subtracting \eqref{equa1} from \eqref{eq1} gives the error equation:
\begin{equation}\label{equa5}
\begin{array}{r@{}l}
e_{i}
=\big(\mathcal{K}e\big)(x_{i})+ q_i, \ \ 0\leq i\leq N,
\end{array}
\end{equation}
where $e_i= u(x_i) - u_i$, and $q_i$ is the quadrature error term
\bearr
q_i
& =\big(\mathcal{K}u_N^{\lambda}\big)(x_{i}) - \big(\mathcal{K}_Nu_N^{\lambda}\big)(x_{i}) \\[2mm]
& =(\bar{K}(x_{i},\tau_{i}(\theta)),u_N^{\lambda}(\tau_{i}(\theta)))_{\omega^{-\mu,1/\lambda-1,1}}-(\bar{K}(x_{i},\tau_{i}(\theta)),u_N^{\lambda}(\tau_{i}(\theta)))_{N,\omega^{-\mu,1/\lambda-1,1}},
\eearr
which can be bounded by using Lemma \ref{lem3}
\begin{equation}\label{equa6}
|q_i |\leq cN^{-m}\|\partial_{\theta}^{m}\bar{K}(x_{i},\tau_{i}(\cdot))\|_{0,\omega^{m-\mu,m+1/\lambda-1,1}}\|u_N^{\lambda}(\tau_{i}(\cdot))\|_{0,\omega^{-\mu,1/\lambda-1,1}}.
\end{equation}
Multiplying both sides of \eqref{equa5} by $h^{\alpha,\beta}_{i,\lambda}(x)$ and summing up the resulting equation from $i=0$ to $i=N$
gives
\begin{equation}
I^{\alpha,\beta}_{N,\lambda}u(x)-u_N^{\lambda}(x)=I^{\alpha,\beta}_{N,\lambda}\big(\mathcal{K}e\big)(x)+\sum_{i=0}^{N}q_i h^{\alpha,\beta}_{i,\lambda}(x).
\end{equation}
Rearranging this equation leads to
\begin{equation}\label{equati3}
e(x)=\big(\mathcal{K}e\big)(x)+I_{1}+I_{2}+I_{3},
\end{equation}
where
\begin{equation}\label{equatio1}
I_{1}=u(x)-I^{\alpha,\beta}_{N,\lambda}u(x),~~~I_{2}=\dps\sum_{i=0}^{N}q_i h^{\alpha,\beta}_{i,\lambda}(x), ~~~I_{3}=I^{\alpha,\beta}_{N,\lambda}\big(\mathcal{K}e\big)(x)-\big(\mathcal{K}e\big)(x),
\end{equation}
which correspond respectively the interpolation error, numerical quadrature error, and the interpolation error for the integral operator
$\mathcal{K}$. Using Gronwall lemma gives
\brr\label{eqx}
|e|&\dps\leq |I_{1}+I_{2}+I_{3}|+K_{0}\int_{0}^{x}(x-s)^{-\mu}|e(s)|ds\\[9pt]
&\dps\leq  |I_{1}+I_{2}+I_{3}|+K_{0}\int_{0}^{x}(x-s)^{-\mu}|I_{1}+I_{2}+I_{3}|
\exp\Big(K_{0}\int_{s}^{x}(x-\tau)^{-\mu}d\tau\Big)ds\\[9pt]
&\leq\dps|I_{1}+I_{2}+I_{3}|+K_{0}
\exp\Big(K_{0}\int_{0}^{x}(x-\tau)^{-\mu}d\tau\Big)\int_{0}^{x}(x-s)^{-\mu}|I_{1}+I_{2}+I_{3}|ds\\[9pt]
&\leq\dps|I_{1}+I_{2}+I_{3}|+K_{0}\exp\big(K_{0}/(1-\mu)\big)\int_{0}^{x}(x-s)^{-\mu}|I_{1}+I_{2}+I_{3}|ds.
\err
where $K_{0}=\dps\max_{0\leq s<x\leq1}|K(x,s)|.$  We then obtain
\bq\label{equat3}
\|e\|_{\infty}\leq\dps c(\|I_{1}\|_{\infty}+\|I_{2}\|_{\infty}+\|I_{3}\|_{\infty}),
\eq
{\color{black}where $c=1+\big(K_{0}/(1-\mu)\big) \exp(K_{0}/(1-\mu)\big)$.}
Now we bound the right hand side term by term. For the first term, it follows from
Proposition \ref{th4}:
\be\label{equat4}
\|I_{1}\|_{\infty}
\leq cN^{\frac{1}{2}-m}\|\partial_{x}^{m}u(x^{\frac{1}{\lambda}})\|_{0,\omega^{\alpha+m,\beta+m,\lambda}}.
\ee
For $I_2$, we deduce from \eqref{equa6}
\begin{equation}\label{equat1}
\begin{array}{r@{}l}
\dps\max_{0\leq i\leq N}|q_i |&\dps\leq cN^{-m}\max_{0\leq i\leq N}\|\partial_{\theta}^{m}\bar{K}(x_{i},\tau_{i}(\cdot))\|_{0,\omega^{m-\mu,m+1/\lambda-1,1}}\max_{0\leq i\leq N}\|u_N^{\lambda}(\tau_{i}(\cdot))\|_{0,\omega^{-\mu,1/\lambda-1,1}}\\[9pt]
&\dps\leq cN^{-m}\max_{0\leq i\leq N}\|\partial_{\theta}^{m}\bar{K}(x_{i},\tau_{i}(\cdot))\|_{0,\omega^{m-\mu,m+1/\lambda-1,1}}\|u_N^{\lambda}\|_{\infty}\\[9pt]
&\dps\leq cN^{-m}\max_{0\leq i\leq N}\|\partial_{\theta}^{m}\bar{K}(x_{i},\tau_{i}(\cdot))\|_{0,\omega^{m-\mu,m+1/\lambda-1,1}}(\|e\|_{\infty}+\|u\|_{\infty}).\\
\end{array}
\end{equation}
This, together with Lemma \ref{lemm1}, gives
\begin{equation}\label{equat5}
\begin{array}{r@{}l}
\|I_{2}\|_{\infty}&\leq \|\dps\sum_{i=0}^{N}q_i h^{\alpha,\beta}_{i,1}(x^{\lambda})\|_{\infty}
\dps\leq c\max_{0\leq i\leq N}|q_i |\max_{x\in I}\sum_{i=0}^{N}|h^{\alpha,\beta}_{i,1}(x^{\lambda})|\\[9pt]
&\leq\dps cN^{-m}\log N\max_{0\leq i\leq N}\|\partial_{\theta}^{m}\bar{K}(x_{i},\tau_{i}(\cdot))\|_{0,\omega^{m-\mu,m+1/\lambda-1,1}}(\|e\|_{\infty}+\|u\|_{\infty}).
\end{array}
\end{equation}
 It remains to estimate the third term $I_{3}$. It follows from \eqref{lemm3}, Lemma \ref{lemm2},
 Lemma \ref{lemm1}, and \eqref{equat2} that
\begin{equation}\label{equat6}
\begin{array}{r@{}l}
\|I_{3}\|_{\infty}
&=\dps\max_{x\in I}|I^{\alpha,\beta}_{N,\lambda}\big(\mathcal{K}e\big)(x)-\big(\mathcal{K}e\big)(x)|
=\dps\max_{z^{1/\lambda}=x\in I}|I^{\alpha,\beta}_{N,1}\big(\mathcal{K}e\big)(z^{1/\lambda})-\big(\mathcal{K}e\big)(z^{1/\lambda})|\\[9pt]
&
= \|(I^{\alpha,\beta}_{N,1}-I)\big(\mathcal{K}e\big)(z^{1/\lambda})\|_{\infty}\\[9pt]
&= \|(I^{\alpha,\beta}_{N,1}-I)\big[\big(\mathcal{K}e\big)(z^{1/\lambda})-\mathcal{T}_{N}\big(\mathcal{K}e\big)(z^{1/\lambda})\big]\|_{\infty}\\[9pt]
&\leq (\|I^{\alpha,\beta}_{N,1}\|_{\infty}+1)\|\big(\mathcal{K}e\big)(z^{1/\lambda})-\mathcal{T}_{N}\big(\mathcal{K}e\big)(z^{1/\lambda})\|_{\infty}\\[9pt]
&\leq
cN^{-\kappa}\log N\|e\|_{\infty}, ~~~0< \kappa<1-\mu.

\end{array}
\end{equation}
The desired result then follows from bringing the estimates \eqref{equat4}, \eqref{equat5}, and \eqref{equat6} into
\eqref{equat3}.
This completes the proof.
\end{proof}

\begin{remark}\label{RK1}
Note that the error estimate \eqref{ex4} involves the quantity $K^*$ defined in \eqref{equati2}.
The boundedness of this quantity depends on the regularity of the function $\bar K$, which is a transformation of
$K$ through \eqref{Kb}. Thus it is important to observe that if we set $\lambda=1/p$ with $p$ being a positive integer,
then under the initial regularity assumption on  $K(x,s)$, i.e.,
$K(x,s)\in C^{m}(I,I)$, we have for all $x_i\in I$,
$h(\theta):=K(x_{i},x_{i}\theta^{1/\lambda})=K(x_{i},x_{i}\theta^{p})\in C^{m}(I)$. Furthermore,
noticing that $\Big(\dfrac{1-\theta^{1/\lambda}}{1-\theta}\Big)^{-\mu}\in C^{\infty}(I)$, we then have
$\bar{K}(x_{i},\tau_{i}(\theta))\in C^{m}(I)$. This guarantees
the boundedness of $K^{*}$ in \eqref{equati2}.
{\color{black}
It is also notable that the constant $c$ in front of the estimate \eqref{equat3}
for $\|u-u_N^{\lambda}\|_{\infty}$
will blow up when $\mu\rightarrow1$. We have tried but unfortunately found no way to improve
the estimate given in the proof of Theorem \ref{the1}.
Possible improvement for this constant certainly requires new estimation technique,
and is worth further investigation.}
\end{remark}

To derive the error estimate in the weighted $L^{2}-$norm,
we will make use of the following generalized Hardy's inequality with weights (see \cite{Kuf03,Gog99}).
\begin{lemma}\label{lemm4}
For all measurable function $f\geq 0$, weight functions $u$ and $v$, $1<p\leq q<\infty$, {\color{black}$-\infty\leq a<b\leq\infty$},
the following generalized Hardy's inequality holds
\begin{equation*}
\Big(\int_{a}^{b}|(Tf)(x)|^{q}u(x)dx\Big)^{1/q}
\leq c\Big(\int_{a}^{b}|f(x)|^{p}v(x)dx\Big)^{1/p}
\end{equation*}
if and only if
\begin{equation*}
\sup_{a<x<b}\Big(\int_{x}^{b}u(t)dt\Big)^{1/q}\Big(\int_{a}^{x}v^{1-p'}(t)dt\Big)^{1/p'}<\infty,~~p'=\frac{p}{p-1},
\end{equation*}
where $T$ is an operator of the form
\begin{equation*}
(Tf)(x)=\int_{a}^{x}\rho(x,s)f(s)ds
\end{equation*}
with $\rho(x,s)$ being a given kernel.
\end{lemma}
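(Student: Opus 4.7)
The statement is the classical Muckenhoupt-type characterization of weighted Hardy inequalities for kernel operators, which the paper attributes to \cite{Kuf03,Gog99}. My plan is to present it as a two-direction proof, treating sufficiency and necessity separately, and in practice the cleanest route is to reduce to the scalar Hardy inequality (the case $\rho\equiv 1$) via duality.

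For the necessity direction (the supremum condition is forced), the plan is to test the inequality against a carefully chosen family of functions. Fix $x_{0}\in(a,b)$ and take
\begin{equation*}
f_{x_{0}}(s)=v^{1-p'}(s)\chi_{(a,x_{0})}(s).
\end{equation*}
For those $x\ge x_{0}$ at which $\rho(x,s)$ is bounded below (which one arranges by a further localization if necessary), $(Tf_{x_{0}})(x)\gtrsim \int_{a}^{x_{0}}v^{1-p'}(s)\,ds$. Inserting this into the left-hand side, using the assumed inequality, and computing the right-hand side with $\int |f_{x_{0}}|^{p}v = \int_{a}^{x_{0}}v^{1-p'}$, one solves the resulting inequality in closed form and arrives precisely at
\begin{equation*}
\Bigl(\int_{x_{0}}^{b}u(t)\,dt\Bigr)^{1/q}\Bigl(\int_{a}^{x_{0}}v^{1-p'}(t)\,dt\Bigr)^{1/p'}\le C,
\end{equation*}
uniform in $x_{0}$, which is the supremum condition.

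For the sufficiency direction (the supremum condition implies the norm inequality), my plan is to dualize: by the $L^{q}(u)$--$L^{q'}(u)$ duality,
\begin{equation*}
\|Tf\|_{L^{q}(u)}=\sup_{\|g\|_{L^{q'}(u)}\le 1}\int_{a}^{b}(Tf)(x)g(x)u(x)\,dx.
\end{equation*}
By Fubini this equals $\int_{a}^{b}f(s)\bigl(\int_{s}^{b}\rho(x,s)g(x)u(x)\,dx\bigr)ds$. I would then apply H\"{o}lder with exponents $p,p'$ and weight factors $v^{1/p}v^{-1/p}$ to split out $\|f\|_{L^{p}(v)}$, reducing the task to bounding the dual Hardy operator $g\mapsto \int_{s}^{b}\rho(x,s)g(x)u(x)\,dx$ in the weighted $L^{p'}$-norm with weight $v^{1-p'}$. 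This reduced inequality is, after a change of the roles of $a$ and $b$, again a Hardy inequality, and iterating the decomposition (or applying the classical Muckenhoupt argument via a dyadic splitting $\{x:2^{k}\le \int_{a}^{x}v^{1-p'}< 2^{k+1}\}$) yields the bound with constant controlled by the supremum quantity.

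The main obstacle is the presence of the general kernel $\rho(x,s)$: the clean Muckenhoupt proof is written for $\rho\equiv 1$, and for a general nonnegative kernel one needs either an Oinarov-type condition on $\rho$ (roughly $\rho(x,s)\approx \rho(x,t)+\rho(t,s)$ for $a<s<t<x<b$) or a further structural hypothesis to reduce to the scalar case. Since the lemma is invoked elsewhere in the paper with specific kernels arising from the weakly singular Volterra operator, I would simply verify that the Oinarov condition holds in our setting, and then cite \cite{Kuf03,Gog99} for the abstract equivalence. A self-contained proof would otherwise require reproducing the dyadic block decomposition argument, which is technical but by now standard.
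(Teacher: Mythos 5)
The first thing to note is that the paper does not prove this lemma at all: it is quoted verbatim as a known result with a pointer to \cite{Kuf03,Gog99}, and that citation is the entire ``proof''. So there is no internal argument for your sketch to be compared with, and your closing fallback (cite the references for the abstract equivalence) is in fact exactly what the authors do. Your outline of the scalar case ($\rho\equiv 1$) is the standard Muckenhoupt argument, and you have correctly put your finger on the real issue with the statement as printed: for a general kernel $\rho(x,s)$ the single Muckenhoupt-type supremum condition does \emph{not} characterize the inequality; the equivalence holds for $\rho\equiv 1$, and for Oinarov-class kernels only with an additional dual condition. That observation is a legitimate criticism of the lemma as stated, not a defect of your proposal.

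That said, the constructive parts of your plan have genuine gaps. In the necessity step, ``arrange by a further localization that $\rho(x,s)$ is bounded below'' is not something you can do for an arbitrary kernel; without a positivity/monotonicity hypothesis on $\rho$ the test-function argument only yields the supremum condition when $\rho\equiv 1$ (or is comparable to a constant on the relevant triangles), so necessity also needs a structural assumption you have not stated. In the sufficiency step, the chain ``dualize, Fubini, H\"older with $v^{1/p}v^{-1/p}$, then iterate'' does not close: a pointwise H\"older application loses the coupling between the two weights, and the actual proof requires the specific Muckenhoupt test-function (or dyadic block) argument rather than an iteration of dual Hardy inequalities. Most importantly, your proposed rescue for the paper's setting fails: the kernel actually used later is $\rho(x,s)=(x-s)^{-\mu}$ with $0<\mu<1$, and this kernel does \emph{not} satisfy the Oinarov condition $\rho(x,s)\approx\rho(x,t)+\rho(t,s)$ for $a<s<t<x$ (let $t\to x$: the right-hand side blows up while $\rho(x,s)$ stays finite). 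The weighted $L^{2}$ bound the authors need for the Riemann--Liouville operator of order $1-\mu<1$ therefore does not follow from the Oinarov-type characterization and would have to be obtained by other means (e.g.\ the Prokhorov--Stepanov theory for fractional integrals, or a direct Schur-type estimate with the specific Jacobi weights $\omega^{\alpha,\beta,\lambda}$). So if you want a self-contained justification of how Lemma 3.1 is used in Theorem 3.2, verifying Oinarov's condition is the one step of your plan that cannot be carried out as written.
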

From Lemma 4.2 in \cite{Che13} and \eqref{equati1}, we have the following weighted mean convergence result
for the fractional interpolation operator $I^{\alpha,\beta}_{N,\lambda}$.
\begin{lemma}\label{lemm5}
For any bounded function $v(x)$ defined on I, there exists a constant $c$ independent of $v$ such that
\begin{equation*}
\dps\sup_{N}\|I^{\alpha,\beta}_{N,\lambda}v\|_{\omega^{\alpha,\beta,\lambda}}
\leq c\|v\|_{\infty}.
\end{equation*}
\end{lemma}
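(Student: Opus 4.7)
The plan is to reduce the statement about the fractional interpolation operator to the analogous statement for the classical shifted Jacobi interpolation operator, which is exactly Lemma 4.2 in \cite{Che13}. The bridge between the two is the identity \eqref{equati1}, which was established earlier when proving Proposition \ref{th2}.

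More precisely, I would first invoke \eqref{equati1} to write
\begin{equation*}
\|I^{\alpha,\beta}_{N,\lambda}v\|_{0,\omega^{\alpha,\beta,\lambda}} = \|I^{\alpha,\beta}_{N,1}\bigl(v(z^{1/\lambda})\bigr)\|_{0,\omega^{\alpha,\beta,1}},
\end{equation*}
so the problem is reduced to controlling the $L^{2}_{\omega^{\alpha,\beta,1}}$-norm of the classical shifted Jacobi-Gauss interpolant of the bounded function $w(z):=v(z^{1/\lambda})$ defined on $I$. Next, I would apply Lemma 4.2 of \cite{Che13}, which asserts that for any bounded function $w$ on $I$, there is a constant $c$, independent of both $w$ and $N$, such that $\sup_{N}\|I^{\alpha,\beta}_{N,1}w\|_{0,\omega^{\alpha,\beta,1}}\leq c\|w\|_{\infty}$. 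Finally, I would observe that since the map $z\mapsto z^{1/\lambda}$ is a bijection from $I$ onto itself, we have $\|w\|_{\infty}=\sup_{z\in I}|v(z^{1/\lambda})|=\sup_{x\in I}|v(x)|=\|v\|_{\infty}$, which yields the claimed inequality.

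Since both ingredients are already in hand, this proof is essentially a change of variable followed by a citation, and I do not anticipate any serious obstacle. The only point requiring a small remark is the validity of the substitution $z=x^{\lambda}$ when $v$ is merely bounded rather than continuous; but the substitution only affects the weight and the integration variable in an $L^{2}$ integral (the interpolant itself is built solely from point values $v(x_{i})=w(z_{i})$), so no regularity beyond boundedness is needed. One could alternatively give a self-contained proof by expanding $I^{\alpha,\beta}_{N,\lambda}v=\sum_{j}v(x_{j})h^{\alpha,\beta}_{j,\lambda}$, estimating by Cauchy--Schwarz and using a quadrature identity for the resulting sum, but routing through \eqref{equati1} and \cite{Che13} is considerably cleaner and keeps the constant $c$ transparently the same as in the classical case.
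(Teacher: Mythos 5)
Your proposal is correct and matches the paper's own reasoning: the paper derives this lemma precisely by combining the change-of-variable identity \eqref{equati1} with Lemma 4.2 of \cite{Che13}, exactly as you do. The only difference is that the paper leaves the reduction implicit (it states the lemma "From Lemma 4.2 in \cite{Che13} and \eqref{equati1}"), whereas you spell out the substitution and the invariance of the sup-norm, which is a harmless elaboration.
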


\begin{theorem}\label{the2}
Let $u(x)$ be the exact solution to \eqref{eq1}, $u_N^{\lambda}(x)$ is the solution of discrete problem \eqref{equa1}.
If $0<\mu<1, -1<\alpha, \beta\leq-\frac{1}{2}$, $K(x,s)\in C^{m}(I,I)$, and
$u(x^{\frac{1}{\lambda}})\in B^{m,1}_{\alpha,\beta}(I), m\geq 1$, then we have
\begin{equation}\label{equati4}
\|u-u_N^{\lambda}\|_{0,\omega^{\alpha,\beta,\lambda}}
\leq
c N^{-m}\big[(1+N^{\frac{1}{2}-\kappa})\|\partial_{x}^{m}u(x^{\frac{1}{\lambda}})\|_{0,\omega^{\alpha+m,\beta+m,1}}
+ K^{*}\|u\|_{\infty}\big],\ \ 0<\kappa<1-\mu,
\end{equation}
where $K^{*}$ is defined in \eqref{equati2}.
\end{theorem}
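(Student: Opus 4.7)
The plan is to use the error equation \eqref{equati3} already derived in the proof of Theorem \ref{the1}, namely $e(x)=\mathcal{K}e(x)+I_{1}+I_{2}+I_{3}$, and measure everything in the weighted $L^{2}_{\omega^{\alpha,\beta,\lambda}}$-norm rather than in $L^{\infty}$. Applying the weighted norm and using the Gronwall-style reduction that was performed inside Theorem \ref{the1} (which yields $|e(x)|\le c(|I_{1}+I_{2}+I_{3}|(x)+\int_{0}^{x}(x-s)^{-\mu}|I_{1}+I_{2}+I_{3}|(s)ds)$), together with the generalized Hardy inequality in Lemma \ref{lemm4} applied with kernel $\rho(x,s)=(x-s)^{-\mu}$ and weights $u=v=\omega^{\alpha,\beta,\lambda}$, will reduce the problem to bounding $\|I_{1}\|_{0,\omega^{\alpha,\beta,\lambda}},\|I_{2}\|_{0,\omega^{\alpha,\beta,\lambda}},\|I_{3}\|_{0,\omega^{\alpha,\beta,\lambda}}$ separately.

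First, for $I_{1}=u-I^{\alpha,\beta}_{N,\lambda}u$, the required bound is exactly inequality \eqref{pequa6} of Proposition \ref{th3}, which gives $\|I_{1}\|_{0,\omega^{\alpha,\beta,\lambda}}\le cN^{-m}\|\partial_{x}^{m}u(x^{1/\lambda})\|_{0,\omega^{\alpha+m,\beta+m,1}}$, producing the first term $cN^{-m}\|\partial_{x}^{m}u(x^{1/\lambda})\|_{0,\omega^{\alpha+m,\beta+m,1}}$ of \eqref{equati4}. Second, for $I_{2}=\sum_{i=0}^{N}q_{i}h^{\alpha,\beta}_{i,\lambda}(x)=I^{\alpha,\beta}_{N,\lambda}\tilde q$ where $\tilde q$ interpolates the quadrature residuals, Lemma \ref{lemm5} yields $\|I_{2}\|_{0,\omega^{\alpha,\beta,\lambda}}\le c\max_{0\le i\le N}|q_{i}|$, and \eqref{equat1} then furnishes the bound $cN^{-m}K^{\ast}(\|e\|_{\infty}+\|u\|_{\infty})$. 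Invoking Theorem \ref{the1} makes $\|e\|_{\infty}$ a higher-order correction, so the effective contribution is $cN^{-m}K^{\ast}\|u\|_{\infty}$, matching the last term in \eqref{equati4}.

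The term $I_{3}=I^{\alpha,\beta}_{N,\lambda}(\mathcal{K}e)-\mathcal{K}e$ is the more delicate one. Writing $\mathcal{K}e-I^{\alpha,\beta}_{N,\lambda}(\mathcal{K}e)=(I-I^{\alpha,\beta}_{N,\lambda})(\mathcal{K}e-\mathcal{T}_{N}\circ\phi_{\lambda})$ where $\phi_{\lambda}$ denotes the change of variable $x\mapsto x^{1/\lambda}$ used in Lemma \ref{lemm2}, and bounding in the $L^{\infty}$-norm (since $\|\cdot\|_{0,\omega^{\alpha,\beta,\lambda}}\le c\|\cdot\|_{\infty}$), I would combine Lemma \ref{lem2}, Ragozin's approximation estimate \eqref{equat2}, and Lemma \ref{lemm2} to obtain
\begin{equation*}
\|I_{3}\|_{0,\omega^{\alpha,\beta,\lambda}}\le c\|(I-I^{\alpha,\beta}_{N,1})[(\mathcal{K}e)(z^{1/\lambda})]\|_{\infty}\le cN^{-\kappa}\log N\,\|e\|_{\infty},\quad 0<\kappa<1-\mu.
\end{equation*}
Substituting the $L^{\infty}$ estimate from Theorem \ref{the1} for $\|e\|_{\infty}$ yields the mixed factor $cN^{1/2-m-\kappa}$ multiplying $\|\partial_{x}^{m}u(x^{1/\lambda})\|_{0,\omega^{\alpha+m,\beta+m,1}}$, which is precisely the $N^{1/2-\kappa}$ enhancement inside the brackets in \eqref{equati4}.

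The step I expect to be hardest is the verification of the Hardy-type bound $\|\mathcal{K}e\|_{0,\omega^{\alpha,\beta,\lambda}}\le c\|e\|_{0,\omega^{\alpha,\beta,\lambda}}$, since Lemma \ref{lemm4} must be applied with the non-trivial weight $\omega^{\alpha,\beta,\lambda}(x)=\lambda(1-x^{\lambda})^{\alpha}x^{(\beta+1)\lambda-1}$ and one has to check the supremum condition $\sup_{0<x<1}(\int_{x}^{1}\omega^{\alpha,\beta,\lambda}(t)dt)^{1/2}(\int_{0}^{x}(\omega^{\alpha,\beta,\lambda}(t))^{-1}dt)^{1/2}<\infty$, which requires the restrictions $\alpha,\beta\le-\tfrac12$ and some work near the endpoints together with a Cauchy--Schwarz reduction of the singular kernel $(x-s)^{-\mu}$. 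Once this Hardy step is justified, gathering the three estimates above and inserting into the Gronwall-type inequality completes the proof.
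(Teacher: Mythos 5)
Your proposal reproduces the paper's own argument essentially step for step: the same decomposition $e=\mathcal{K}e+I_{1}+I_{2}+I_{3}$ with the Gronwall/Hardy (Lemma \ref{lemm4}) reduction to weighted bounds on $I_{1},I_{2},I_{3}$, Proposition \ref{th3} for $I_{1}$, Lemma \ref{lemm5} plus \eqref{equat1} for $I_{2}$, and bootstrapping the $L^{\infty}$ estimate of Theorem \ref{the1} for $I_{3}$. The only small deviation is in $I_{3}$, where you invoke the Lebesgue-constant bound (Lemma \ref{lem2}) in $L^{\infty}$ and thus pick up an extra $\log N$ that you silently drop, whereas the paper uses the weighted stability of Lemma \ref{lemm5} to avoid it; this is harmless since $\kappa\in(0,1-\mu)$ is arbitrary, so the $\log N$ can be absorbed by slightly decreasing $\kappa$.
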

\begin{proof}
Similar to \eqref{eqx}, we deduce from Lemma \ref{lemm4}
\begin{equation}
\begin{array}{r@{}l}
\|e\|_{0,\omega^{\alpha,\beta,\lambda}}
&\dps\leq \|I_{1}\|_{0,\omega^{\alpha,\beta,\lambda}}+\|I_{2}\|_{0,\omega^{\alpha,\beta,\lambda}}+\|I_{3}\|_{0,\omega^{\alpha,\beta,\lambda}} \\[9pt] &~~~\dps+K_{0}\exp(K_{0}/(1-\mu))\Big\|\int_{0}^{x}(x-s)^{-\mu}|I_{1}+I_{2}+I_{3}|ds\Big\|_{0,\omega^{\alpha,\beta,\lambda}}\\[9pt]
&\leq c(\|I_{1}\|_{0,\omega^{\alpha,\beta,\lambda}}+\|I_{2}\|_{0,\omega^{\alpha,\beta,\lambda}}+\|I_{3}\|_{0,\omega^{\alpha,\beta,\lambda}}).\\
\end{array}
\end{equation}
To bound the right hand side, we first use Proposition \ref{th3} to get
\begin{equation}\label{equati5}
\|I_{1}\|_{0,\omega^{\alpha,\beta,\lambda}}=\|u-I^{\alpha,\beta}_{N,\lambda}u\|_{0,\omega^{\alpha,\beta,\lambda}}\leq cN^{-m}\|\partial_{x}^{m}u(x^{\frac{1}{\lambda}})\|_{0,\omega^{\alpha+m,\beta+m,1}}.
\end{equation}
Then applying Lemma \ref{lemm5} and \eqref{equat1} gives
\begin{equation}\label{equati6}
\begin{array}{r@{}l}
\|I_{2}\|_{0,\omega^{\alpha,\beta,\lambda}}
&\dps=\Big\|\sum_{i=0}^{N}q_i h_{i,1}^{\alpha,\beta}(x^{\lambda})\Big\|_{0,\omega^{\alpha,\beta,\lambda}}
\leq c\max_{0\leq i\leq N}|q_i |
\leq cN^{-m}K^{*}(\|e\|_{\infty}+\|u\|_{\infty}).\\
\end{array}
\end{equation}
Finally, we notice $\|\cdot\|_{\omega^{\alpha,\beta,1}}$ can be bounded by $\|\cdot\|_{\infty}$, and
\begin{equation*}
\begin{array}{r@{}l}
\|(I^{\alpha,\beta}_{N,\lambda}-I)\big(\mathcal{K}e\big)(x)\|_{\omega^{\alpha,\beta,\lambda}}
&\dps=\Big\{\int_{0}^{1}\big[(I^{\alpha,\beta}_{N,\lambda}-I)\big(\mathcal{K}e\big)(x)\big]^{2}
\lambda(1-x^{\lambda})^{\alpha}x^{(\beta+1)\lambda-1}dx\Big\}^{1/2}\\[9pt]
&\dps=\Big\{\int_{0}^{1}\Big[\sum_{i=0}^{N}\big(\mathcal{K}e\big)(x_{i})h_{i,\lambda}^{\alpha,\beta}(x)-\big(\mathcal{K}e\big)(x)\Big]^{2}
(1-x^{\lambda})^{\alpha}x^{\beta\lambda}dx^{\lambda}\Big\}^{1/2}\\[9pt]
&\dps=\Big\{\int_{0}^{1}\Big[\sum_{i=0}^{N}\big(\mathcal{K}e\big)(z_{i}^{\frac{1}{\lambda}})h_{i,1}^{\alpha,\beta}(z)
-\big(\mathcal{K}e\big)(z^{\frac{1}{\lambda}})\Big]^{2}(1-z)^{\alpha}z^{\beta}dz\Big\}^{1/2}\\[9pt]
&\dps=\|(I^{\alpha,\beta}_{N,1}-I)\big(\mathcal{K}e\big)(x^{\frac{1}{\lambda}})\|_{\omega^{\alpha,\beta,1}},
\end{array}
\end{equation*}
where $\{z_{i}\}_{i=0}^{N}$ are zeros of shifted Jacobi polynomial {\color{black}$J^{\alpha,\beta,1}_{N+1}(z).$}
Thus it follows from \eqref{equat2}, Lemma \ref{lemm2}, and Lemma \ref{lemm5}
\begin{equation}\label{pequat1}
\begin{array}{r@{}l}
\|I_{3}\|_{\omega^{\alpha,\beta,\lambda}}
&=\|(I^{\alpha,\beta}_{N,\lambda}-I)\big(\mathcal{K}e\big)(x)\|_{\omega^{\alpha,\beta,\lambda}}
=\|(I^{\alpha,\beta}_{N,1}-I)\big(\mathcal{K}e\big)(x^{\frac{1}{\lambda}})\|_{\omega^{\alpha,\beta,1}}\\[9pt]
&=\big\|(I^{\alpha,\beta}_{N,1}-I)\big[\big(\mathcal{K}e\big)(x^{\frac{1}{\lambda}})-\mathcal{T}_{N}\big(\mathcal{K}e\big)(x^{\frac{1}{\lambda}})\big]\big\|_{\omega^{\alpha,\beta,1}}\\[9pt]
&\leq \big\|I^{\alpha,\beta}_{N,1}\big[\big(\mathcal{K}e\big)(x^{\frac{1}{\lambda}})-\mathcal{T}_{N}\big(\mathcal{K}e\big)(x^{\frac{1}{\lambda}})\big]\big\|_{\omega^{\alpha,\beta,1}}
+\big\|\big(\mathcal{K}e\big)(x^{\frac{1}{\lambda}})-\mathcal{T}_{N}\big(\mathcal{K}e\big)(x^{\frac{1}{\lambda}})\big\|_{\omega^{\alpha,\beta,1}}\\[9pt]
&\leq c\big\|\big(\mathcal{K}e\big)(x^{\frac{1}{\lambda}})-\mathcal{T}_{N}\big(\mathcal{K}e\big)(x^{\frac{1}{\lambda}})\big\|_{\infty}\\[9pt]
&\leq cN^{-\kappa}\|\big(\mathcal{K}e\big)(x^{\frac{1}{\lambda}})\|_{0,\kappa}\\[9pt]
&\leq cN^{-\kappa}\|e\|_{\infty},0<\kappa<1-\mu.
\end{array}
\end{equation}
Then we deduce from Theorem \ref{the1}
\begin{equation}\label{equati7}
\|I_{3}\|_{\omega^{\alpha,\beta,\lambda}}\leq
C N^{\frac{1}{2}-m-\kappa}(\|\partial_{x}^{m}u(x^{\frac{1}{\lambda}})\|_{0,\omega^{\alpha+m,\beta+m,1}}+N^{-\frac{1}{2}}\log N K^{*}\|u\|_{\infty}).
\end{equation}
Combining \eqref{equati5}, \eqref{equati6}, and \eqref{equati7} proves the theorem.
\end{proof}

\section{Numerical results}
\setcounter{equation}{0}

In this section, we present some numerical results to verify the error estimates obtained in
the previous section for the proposed method.
The main goal is to demonstrate that the new method has the capability to capture the typical solutions of VIEs
with high accuracy. To this end a series of
numerical tests are carried out in the following.
All the errors to be presented below are in $L^{\infty}(I)-$ and $L^{2}_{\omega^{\alpha,\beta,\lambda}}(I)-$norms,
where $\alpha,~\beta$ are related to the corresponding collocation points,
i.e., the zeros of $J^{\alpha,\beta,\lambda}_{N+1}(x)$.
In order to calculate the errors, for the test cases in which the exact solution is not available,
a numerical solution obtained with a very large $N$ will be served as the exact solution.

\begin{example}\label{ex1}
 Consider the Volterra integral equation \eqref{eq1}
with $g(x)=1$ and $K(x,s)=\exp(x-s).$
\end{example}
It has been known from \cite{Bru04} that for this smooth $g$ and kernel function $K$,
the solution of \eqref{eq1} can be expressed as
\begin{equation}\label{ex5}
u(x)=\dps\sum_{(j,k)\in G}\gamma_{j,k}x^{j+k\mu}+u_{r}(x),~~~~x\in I,
\end{equation}
where $G:=\{(j,k): j, k$ are non-negative integers\}, $\gamma_{j,k}$ are constants, and $u_{r}$ is a smooth function.

In virtue of the solution structure, we want to choose the value of $\lambda$ such that
$u(x^{1/\lambda})$ is smooth or as regular as possible. This can be easily done by taking $\lambda$ such that
$(j+k\mu)/\lambda$ is integer or as large as possible.
In this first example, we test for $\mu=0.2$ and $0.5$. In these cases there are many possible choices for
$\lambda$. The obtained numerical results
are shown in semi-log scale as a function of $N$ in Figure \ref{fig1} with $\alpha=\beta=-\frac{1}{2}$.
It is clear that the exponential convergence rate is attained with $\lambda=1/5, 1/10, 1/15$ for $\mu=0.2$, and
$\lambda=1/2, 1/4, 1/6$ for $\mu=0.5$ since all these values make $(j+k\mu)/\lambda$ integer.

\begin{figure*}[htbp]
\begin{minipage}[t]{0.32\linewidth}
\centerline{\includegraphics[scale=0.33]{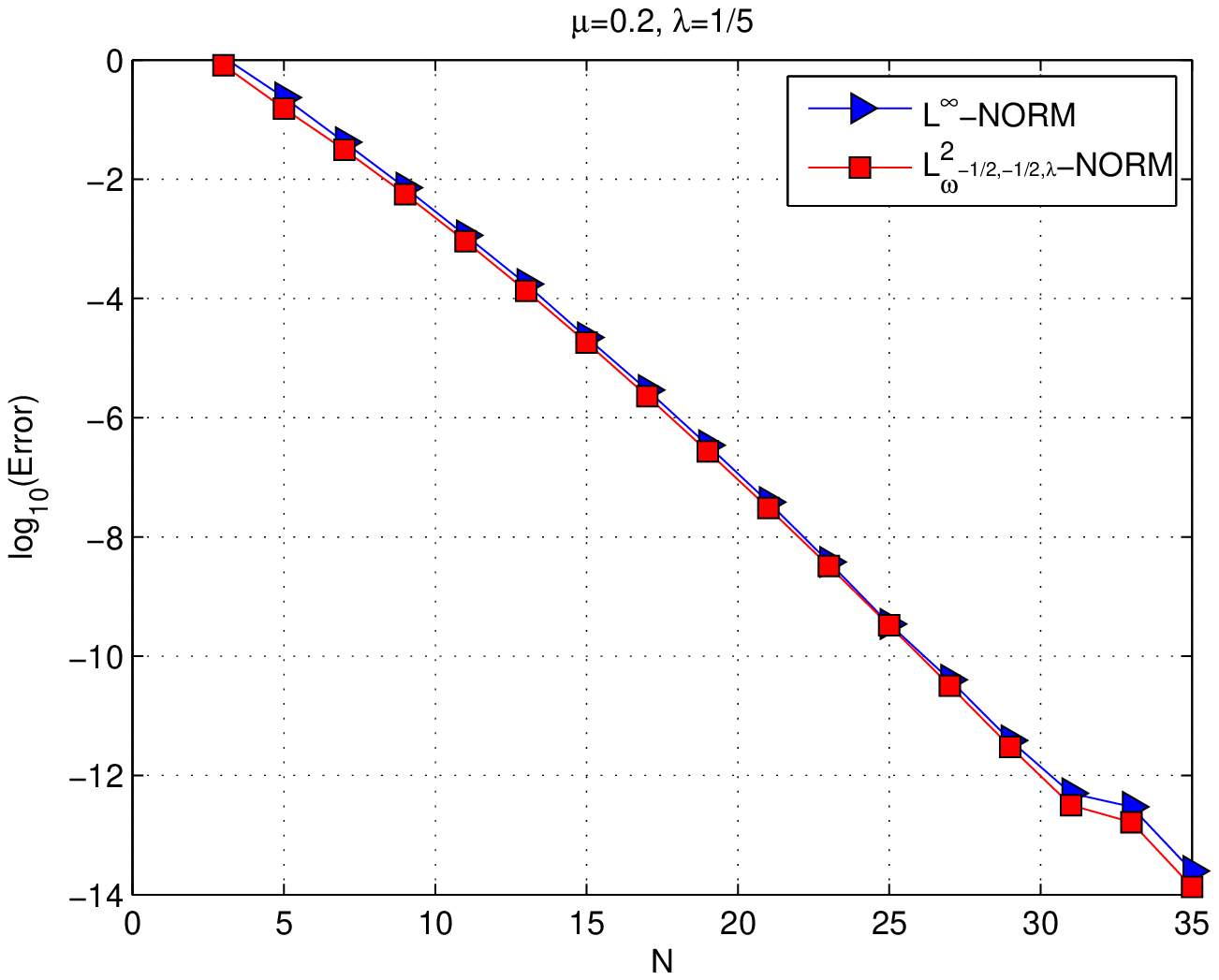}}
\centerline{(a)}
\end{minipage}
\begin{minipage}[t]{0.32\linewidth}
\centerline{\includegraphics[scale=0.33]{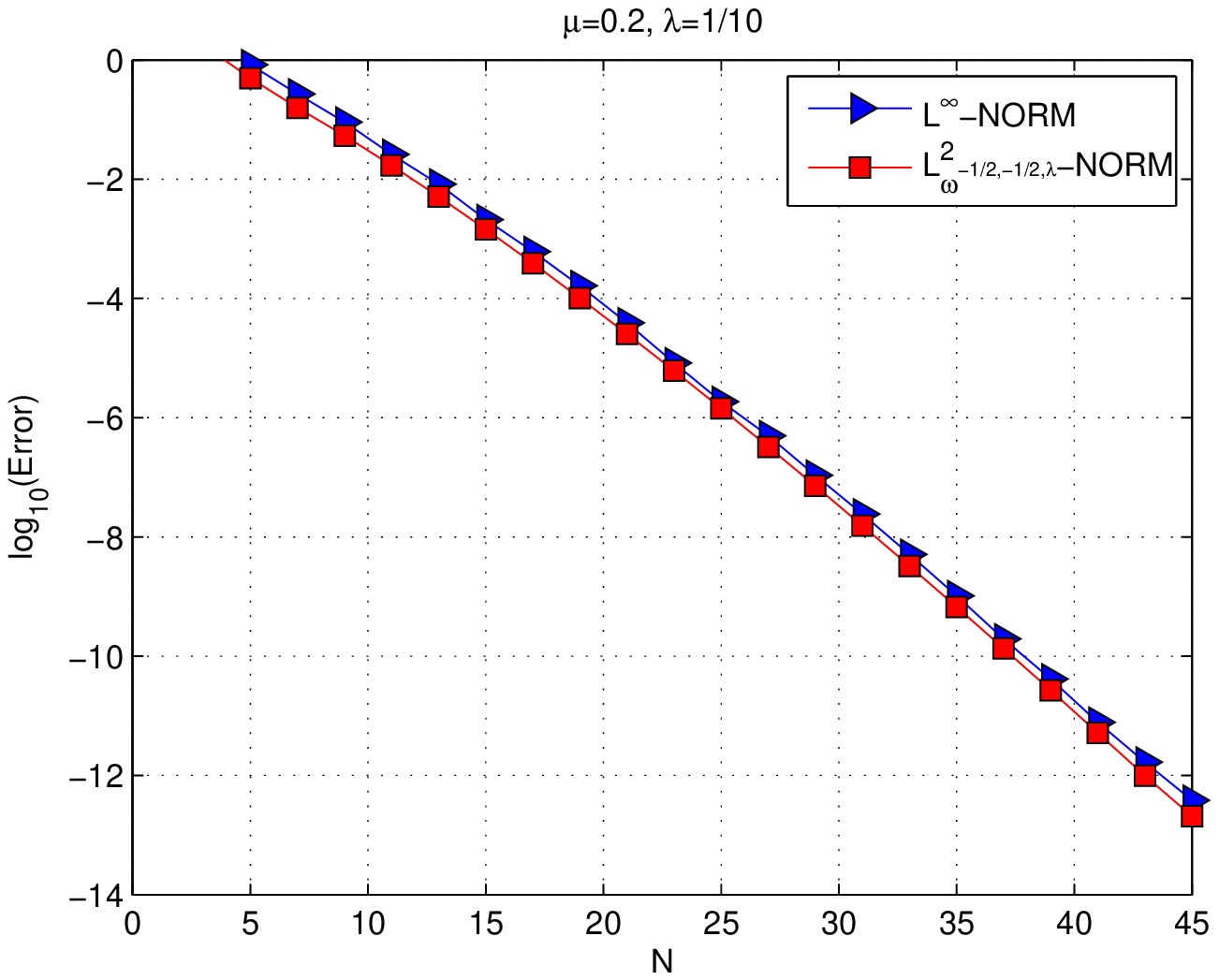}}
\centerline{(b)}
\end{minipage}
\begin{minipage}[t]{0.32\linewidth}
\centerline{\includegraphics[scale=0.33]{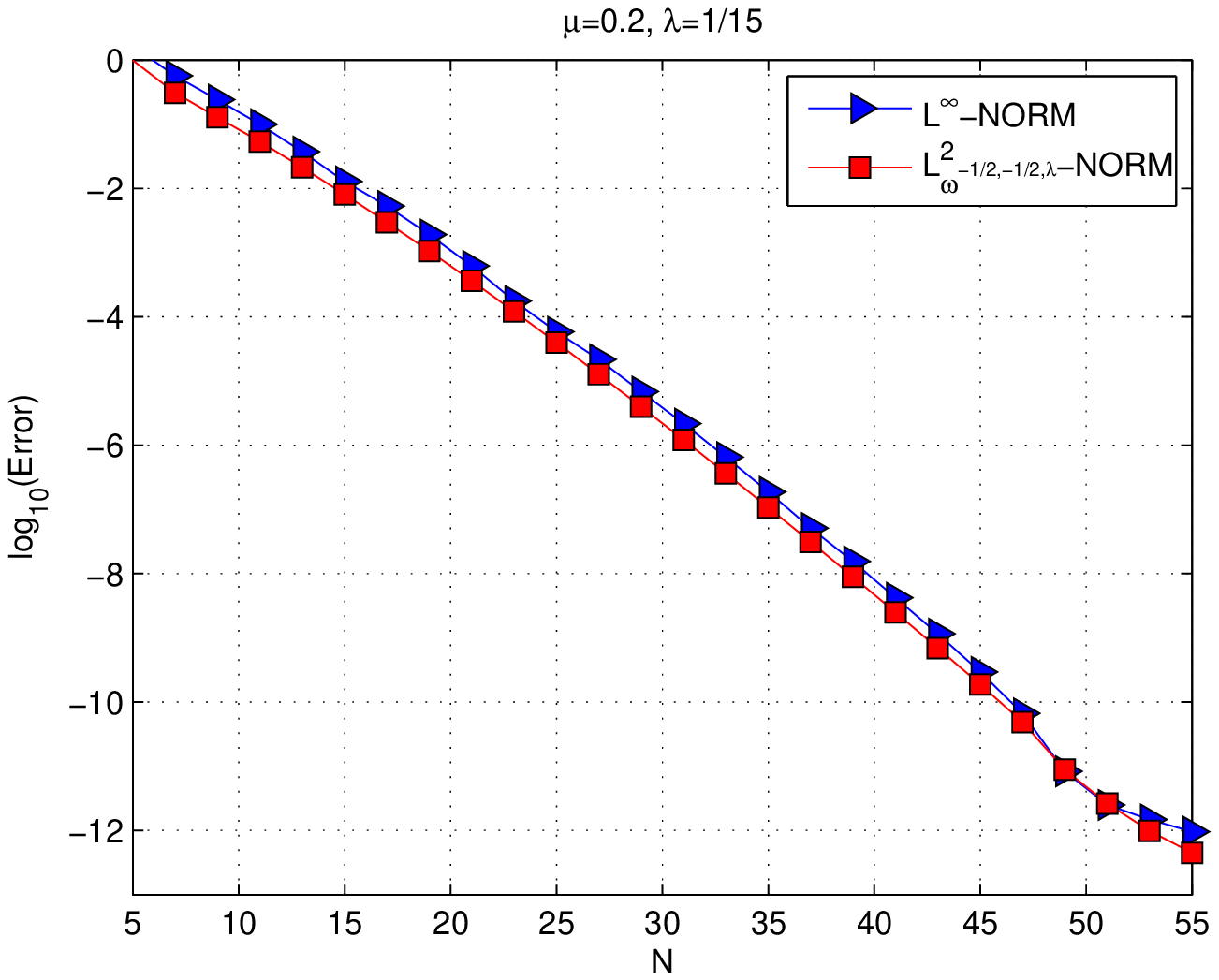}}
\centerline{(c)}
\end{minipage}
\vfill
\vskip 3mm
\begin{minipage}[t]{0.32\linewidth}
\centerline{\includegraphics[scale=0.33]{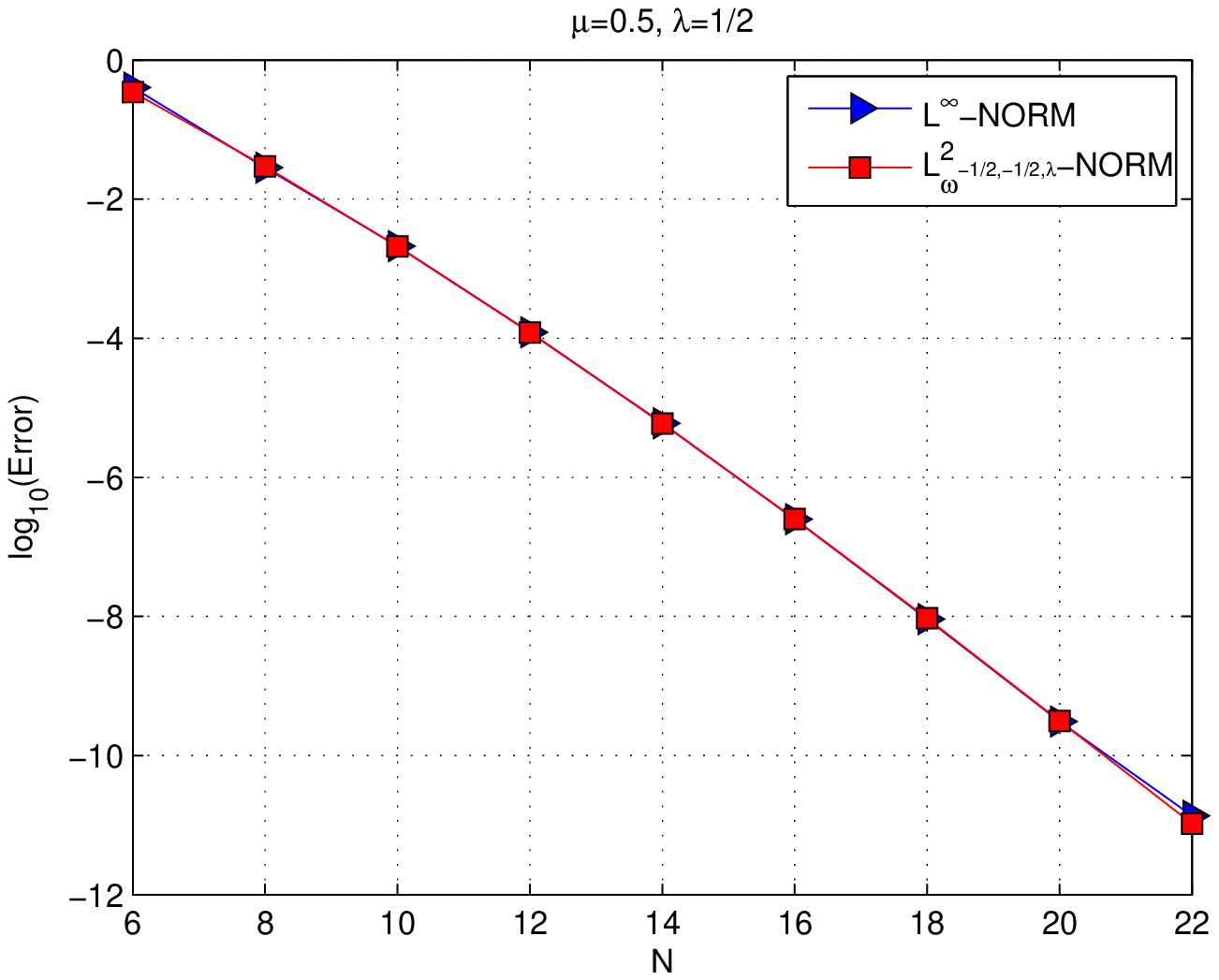}}
\centerline{(d)}
\end{minipage}
\begin{minipage}[t]{0.32\linewidth}
\centerline{\includegraphics[scale=0.33]{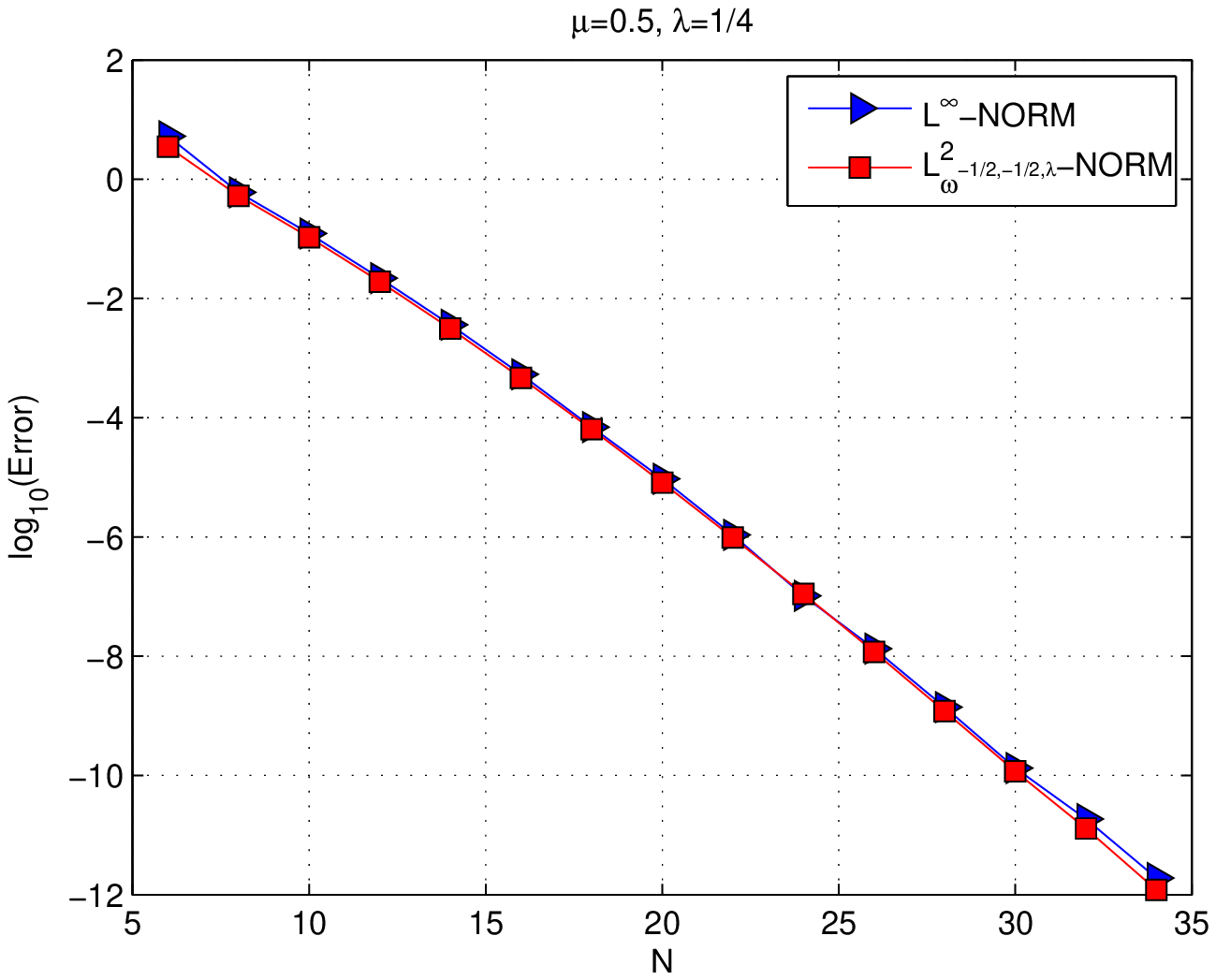}}
\centerline{(e)}
\end{minipage}
\begin{minipage}[t]{0.32\linewidth}
\centerline{\includegraphics[scale=0.33]{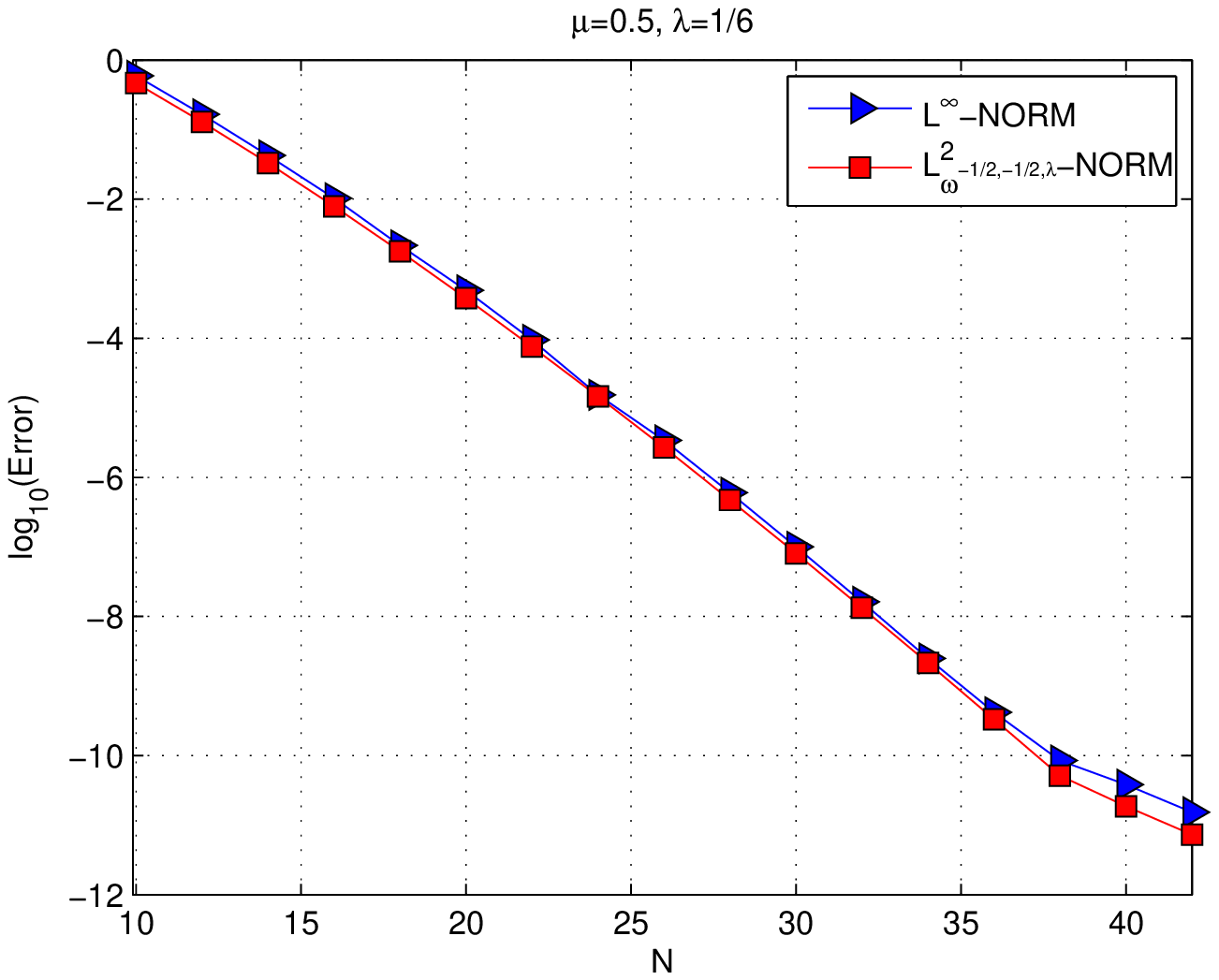}}
\centerline{(f)}
\end{minipage}
\caption{(Example \ref{ex1}) $L^{\infty}-$ and $L^{2}_{\omega^{-1/2,-1/2,\lambda}}-$norm errors versus $\lambda-$polynomial degree $N$ with:
(a) $\mu=0.2,\lambda=\frac{1}{5}$;
(b) $\mu=0.2,\lambda=\frac{1}{10}$;
(c) $\mu=0.2,\lambda=\frac{1}{15}$;
(d) $\mu=0.5,\lambda=\frac{1}{2}$;
(e) $\mu=0.5,\lambda=\frac{1}{4}$;
(f) $\mu=0.5,\lambda=\frac{1}{6}$.
}\label{fig1}
\end{figure*}

\begin{example}\label{ex2}
 Consider the equation \eqref{eq1} with $K(x,s)=1,$ and limited regular source term $g(x)=x^{0.5}$.
\end{example}
In this case, there is not available result on the regularity of the exact solution. So it is interesting to see if high accuracy
is attainable for the fractional spectral method by choosing suitable $\lambda$.
Figure \ref{fig2} shows the errors in semi-log scale as a function of
$N$ for a number of values of $\mu$ and $\lambda$.
In computing the errors we have assumed that the numerical solution obtained with $N=80$
is the ``exact" solution since the real solution is unknown.
Surprisingly, for all tested values of $\mu$ and $\lambda$,
the error curves are almost linear versus the degrees of the $\lambda$-polynomial. This means that
the convergence rate of the proposed method is exponential even if there is no information about the exact solution.
A reasonable explanation for this excellent result
is that the transformed solution, i.e., $u(x^{1/\lambda})$, becomes smooth or regular enough if a suitably small $\lambda$
is used in the approximation.

\begin{figure*}[htbp]
\begin{minipage}[t]{0.45\linewidth}
\centerline{\includegraphics[scale=0.45]{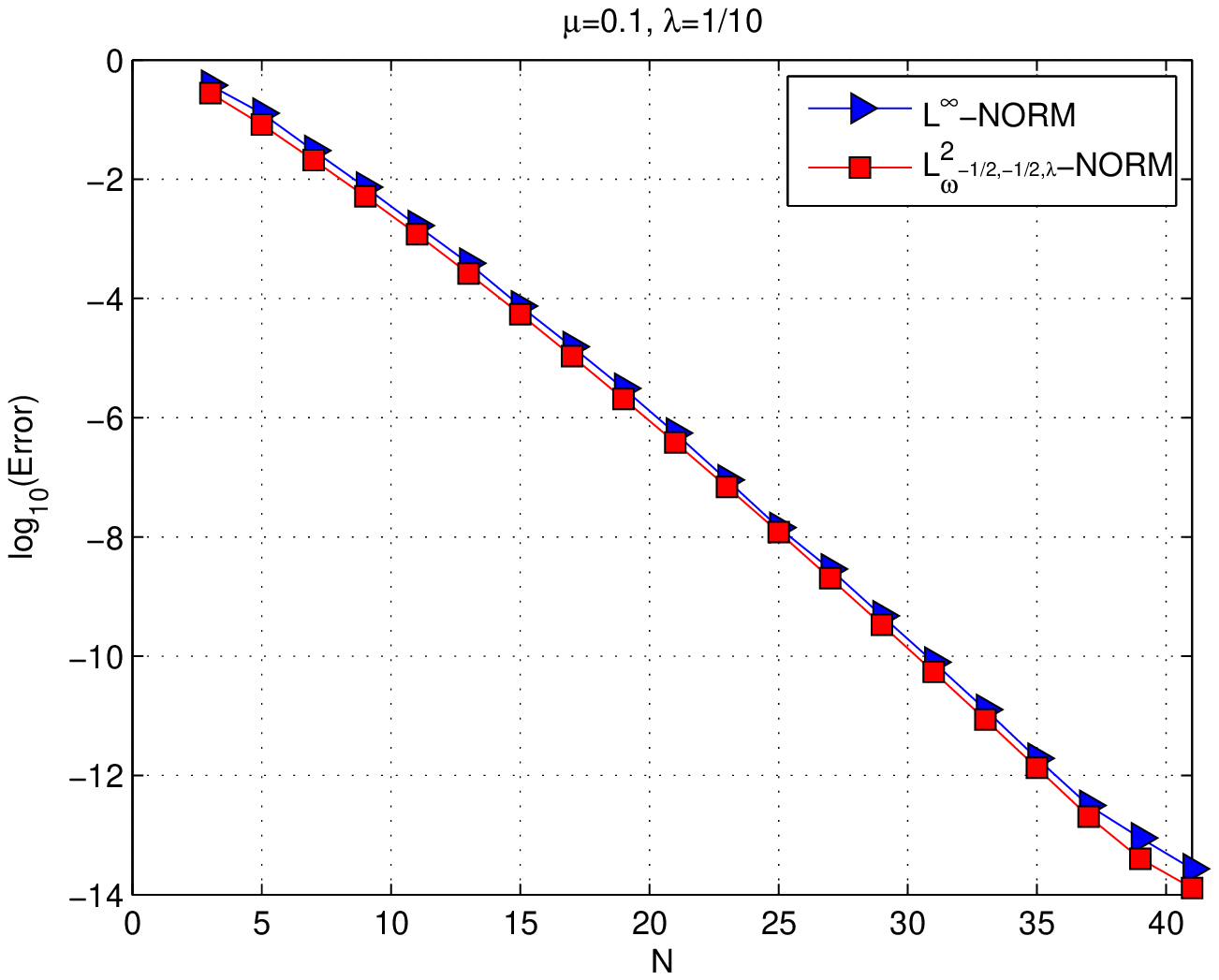}}
\centerline{(a)}
\end{minipage}
\begin{minipage}[t]{0.45\linewidth}
\centerline{\includegraphics[scale=0.45]{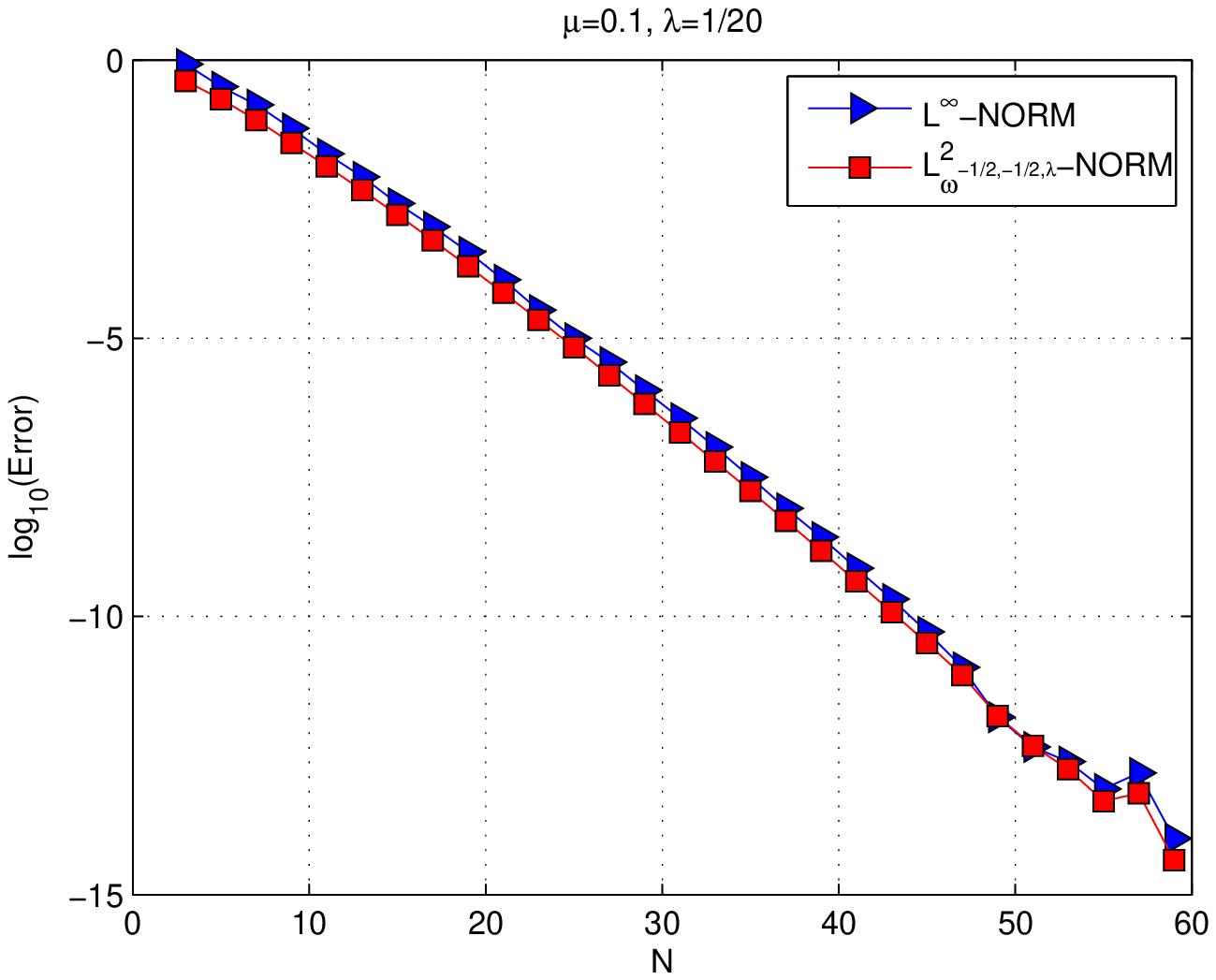}}
\centerline{(b)}
\end{minipage}
\vfill
\vskip 3mm
\begin{minipage}[t]{0.45\linewidth}
\centerline{\includegraphics[scale=0.45]{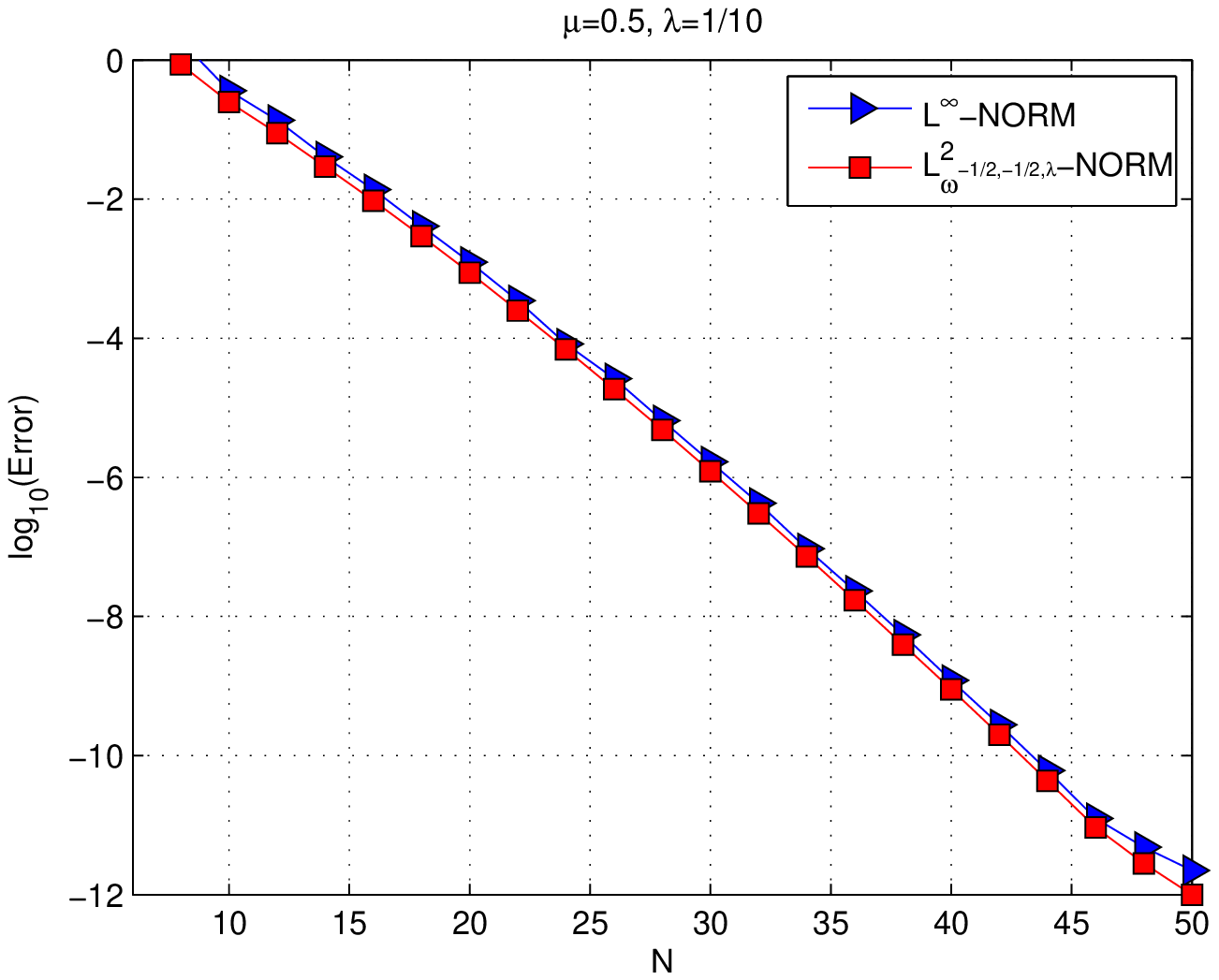}}
\centerline{(c)}
\end{minipage}
\begin{minipage}[t]{0.45\linewidth}
\centerline{\includegraphics[scale=0.45]{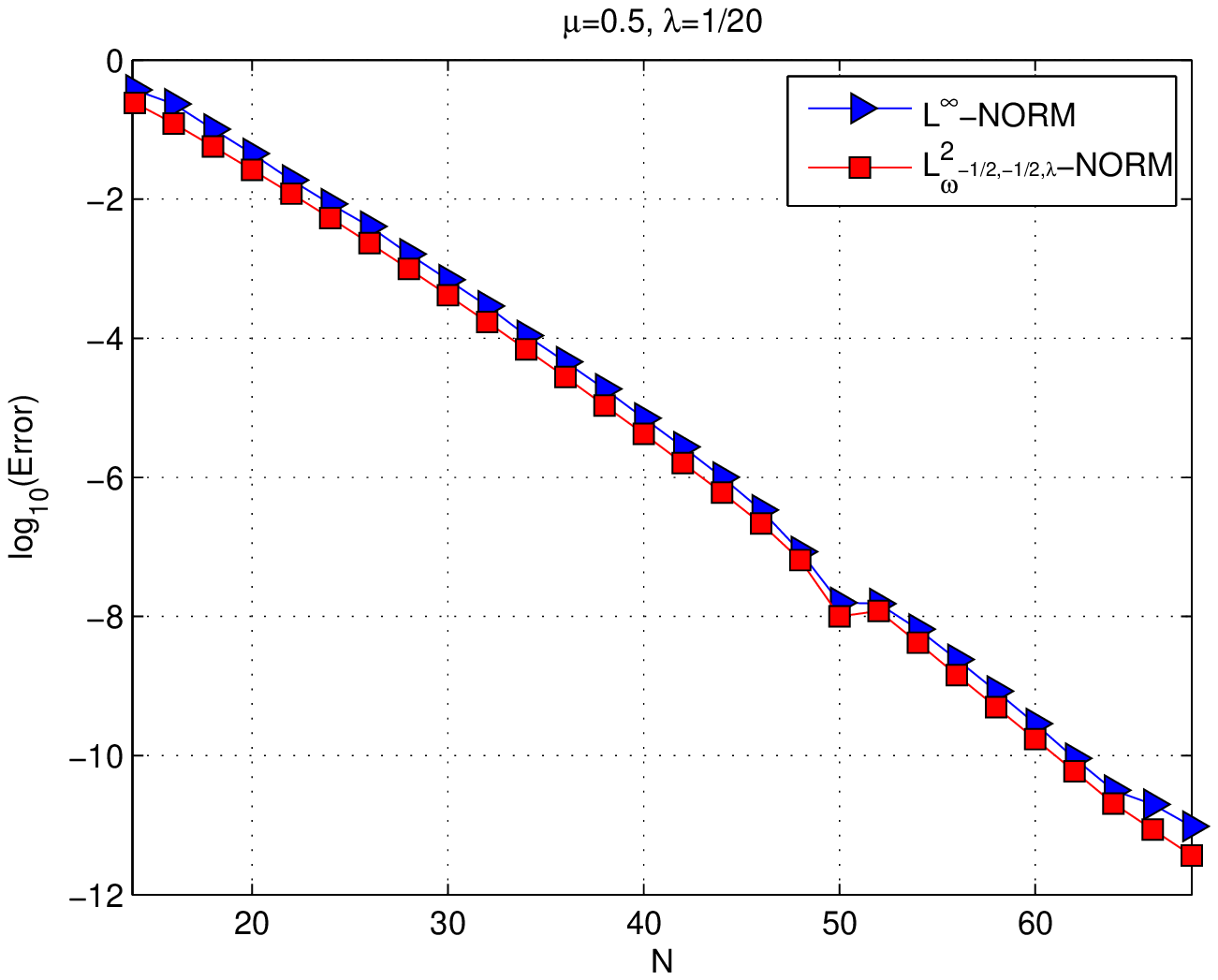}}
\centerline{(d)}
\end{minipage}
\caption{(Example \ref{ex2}) Errors versus $\lambda-$polynomial degree $N$ with
(a) $\mu=0.1,\lambda=\frac{1}{10}$;
(b) $\mu=0.1,\lambda=\frac{1}{20}$;
(c) $\mu=0.5,\lambda=\frac{1}{10}$;
(d) $\mu=0.5,\lambda=\frac{1}{20}$.
}\label{fig2}
\end{figure*}

\begin{example} (\cite{Li15} Example 5.4)\label{pex3}
Consider the fabricated solution $u(x)=x^{-\mu} \sin(x)$, corresponding to $K(x,s)=1$ and
\begin{equation*}
g(x)=x^{-\mu} \sin(x)+\sqrt{\pi}\Gamma(1-\mu)x^{\frac{1}{2}-\mu}\sin(\frac{x}{2}){\color{black}\mathcal{B}(\frac{1}{2}-\mu,\frac{x}{2})},
\end{equation*}
 where {\color{black}$\mathcal{B}(\cdot,\cdot)$} is the Bessel function, i.e.,
 \begin{equation*}
 {\color{black}\mathcal{B}(\mu,x)}=(\frac{x}{2})^{\mu}\sum_{k=0}^{+\infty}\frac{(-x^{2})^{k}}{k!\Gamma(\mu+k+1)4^{k}}.
 \end{equation*}
\end{example}

Clearly, the exact solution of this example has singularity at the left end point,
i.e., $u'(x) \sim x^{-\mu}$ at $x = 0$.
In Figure \ref{fig3}, we plot the errors in the $L^{\infty}(I)$- and $L^{2}_{\omega^{-1/2,-1/2,\lambda}}$-norm
in semi-log scale as a function of $N$ for $\mu=\frac{1}{2},\frac{2}{3}$. In the calculation we have used
$\lambda=\frac{1}{2}$ and $\frac{1}{4}$ for $\mu=\frac{1}{2}$, and
$\lambda=\frac{1}{3}$ and $\frac{1}{6}$ for $\mu=\frac{2}{3}$.
We observe that the exponential convergence is obtained by the proposed fractional spectral method, which
is an expected result since for the tested $\mu$, $u(x^{1/\lambda})$ is smooth for the $\lambda$ values used
in the computation.
It is worth to notice that this example has also been investigated in \cite{Li15} by using a classical spectral method.
The strategy used in that paper is a smoothing transformation approach.
That is, the original integral equation is first transformed into a new one
having smooth solution, then a pseudo spectral method based on the classical
Chebyshev or Legendre polynomials is constructed to approximate the smooth solution.
We emphasize that the current paper makes use of a new idea, which is completely different than
the smoothing transformation approach used in \cite{Li15}. The new idea allows full analysis
for the proposed numerical schemes, and is applicable to a wider class of problems.

\begin{figure*}[htbp]
\begin{minipage}[t]{0.45\linewidth}
\centerline{\includegraphics[scale=0.45]{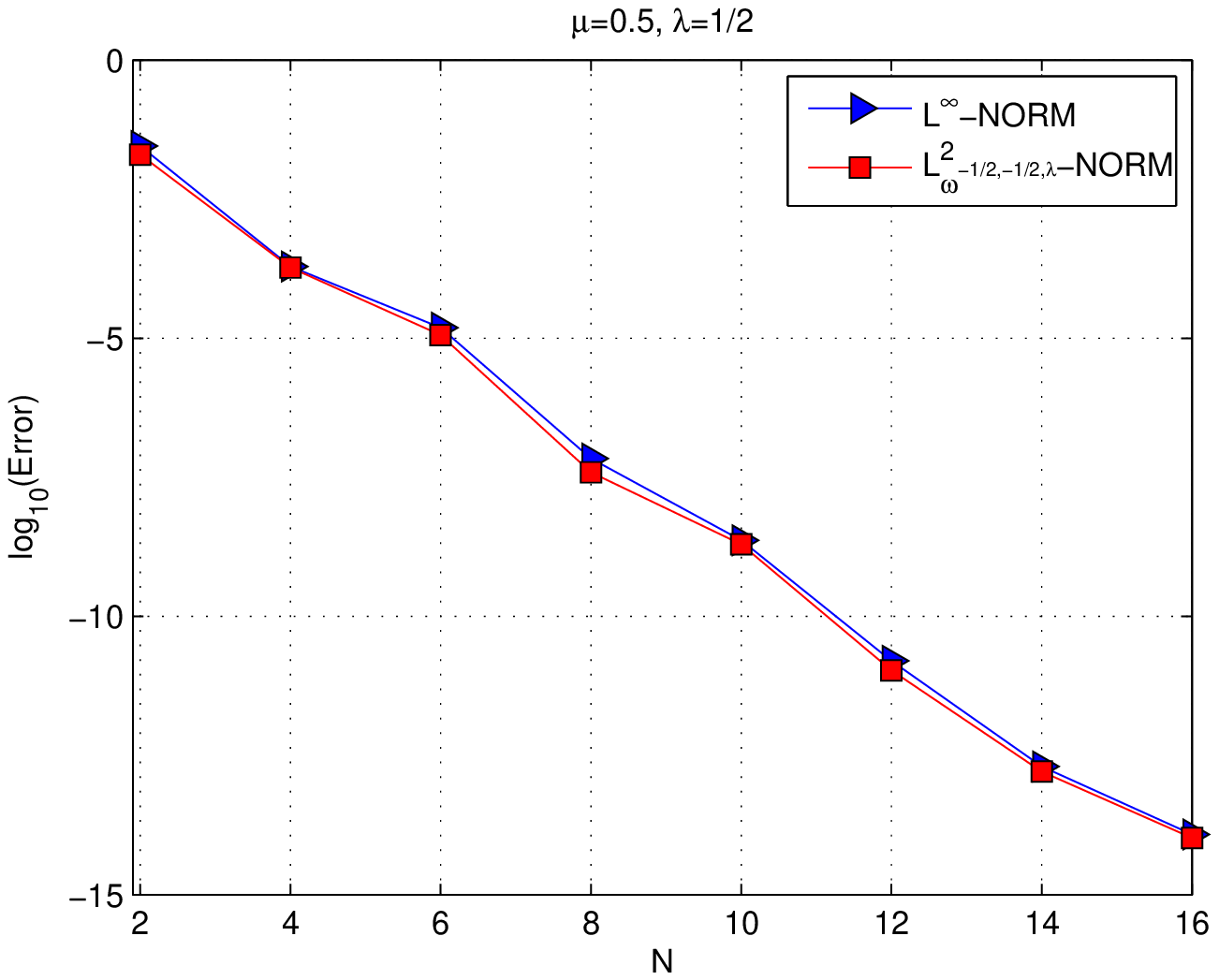}}
\centerline{(a)}
\end{minipage}
\begin{minipage}[t]{0.45\linewidth}
\centerline{\includegraphics[scale=0.45]{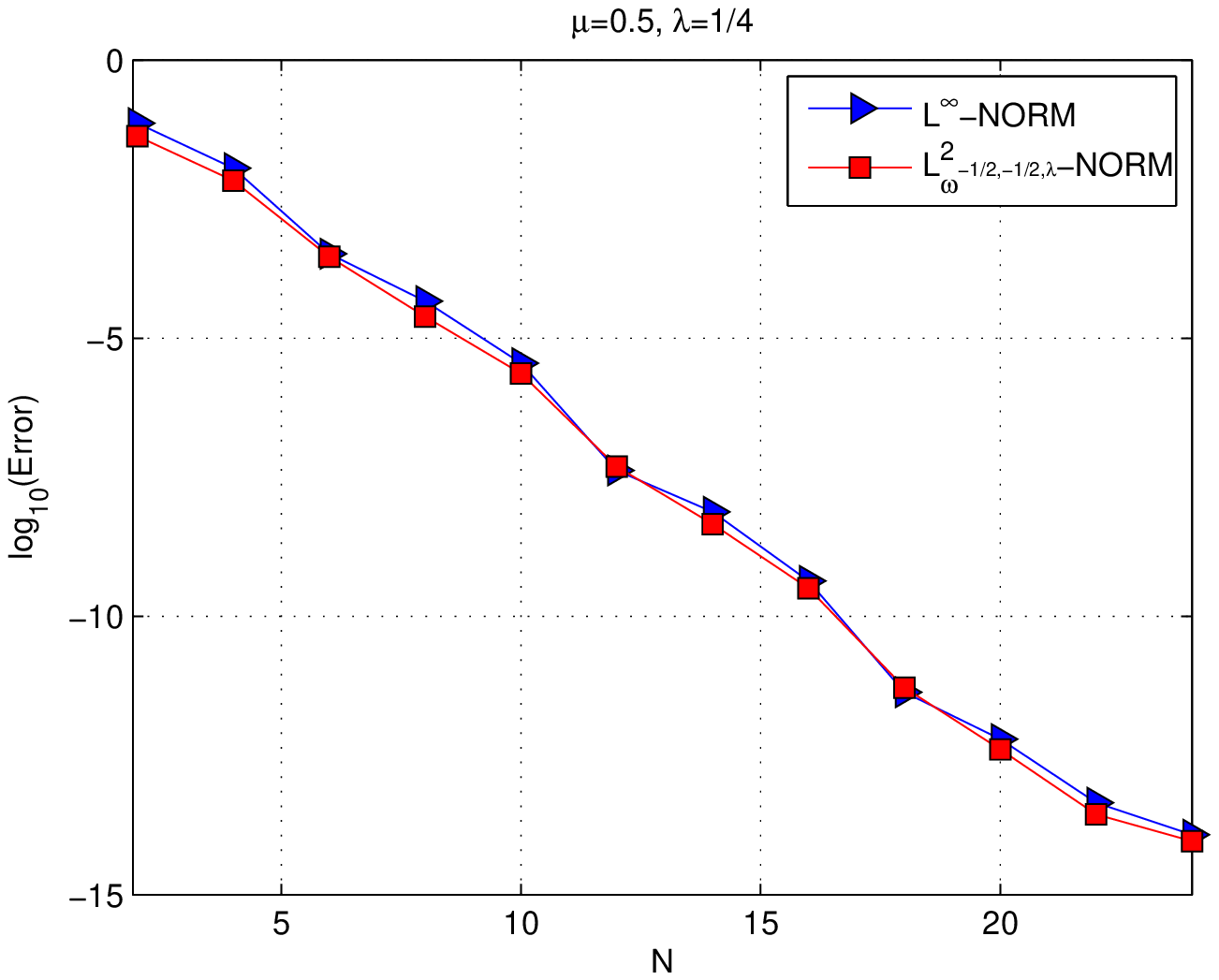}}
\centerline{(b)}
\end{minipage}
\vfill
\vskip 3mm
\begin{minipage}[t]{0.45\linewidth}
\centerline{\includegraphics[scale=0.45]{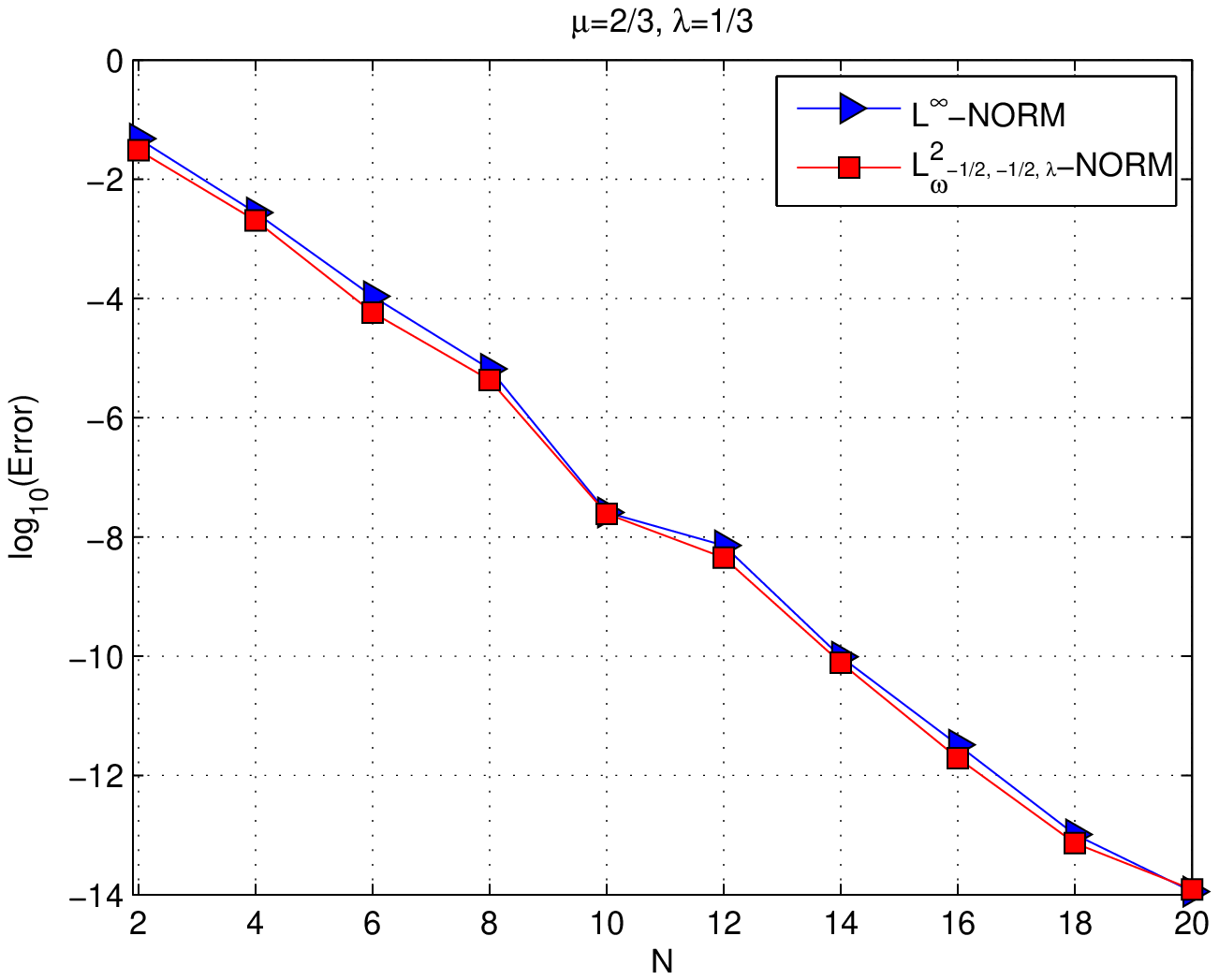}}
\centerline{(c)}
\end{minipage}
\begin{minipage}[t]{0.45\linewidth}
\centerline{\includegraphics[scale=0.45]{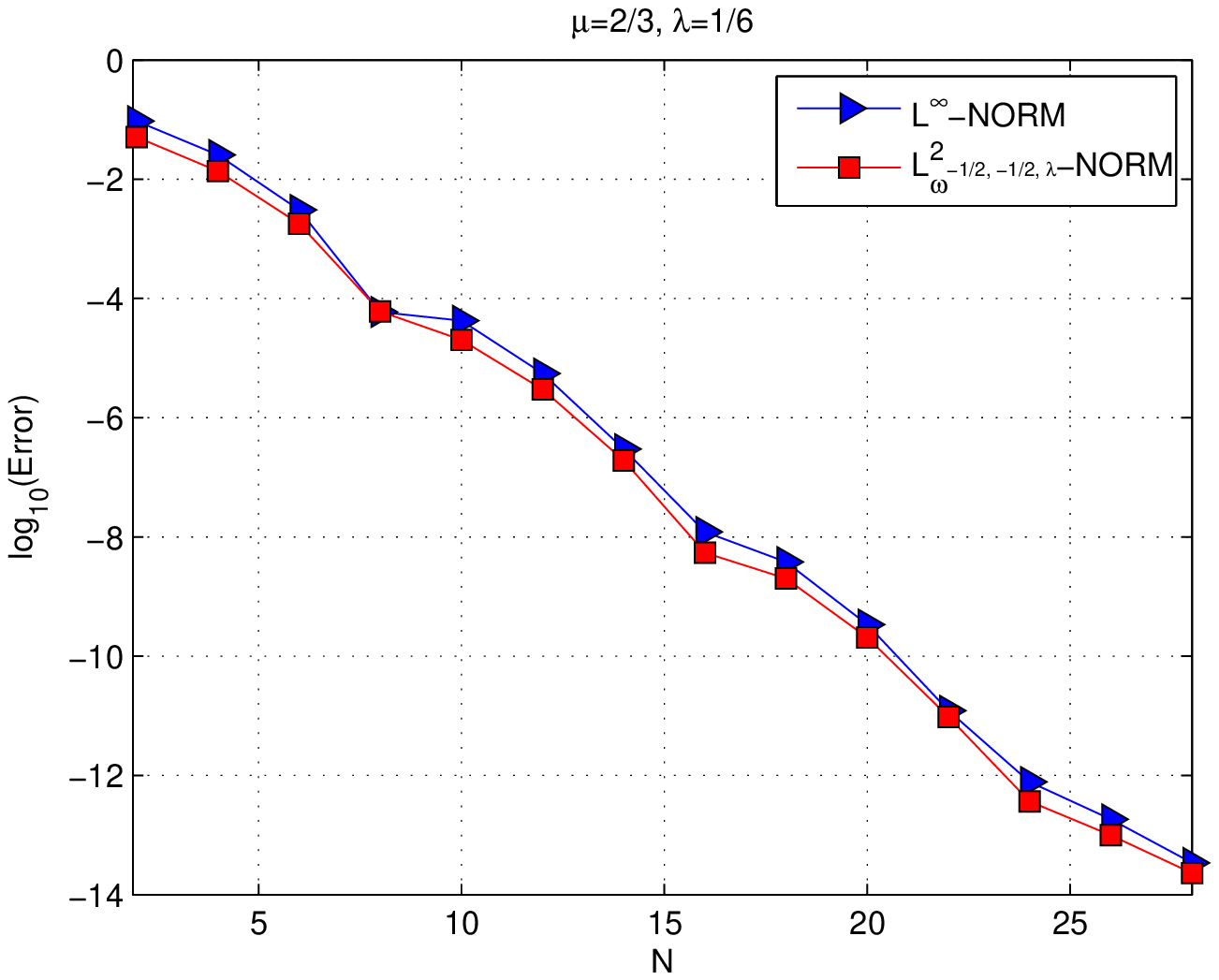}}
\centerline{(d)}
\end{minipage}
\caption{(Example \ref{pex3}) $L^{\infty}-$ and $L^{2}_{\omega^{-1/2,-1/2,\lambda}}-$norm errors versus $\lambda-$ polynomial degree $N$ with
(a) $\mu=0.5,\lambda=\frac{1}{2}$;
(b) $\mu=0.5,\lambda=\frac{1}{4}$;
(c) $\mu=\frac{2}{3},\lambda=\frac{1}{3}$;
(d) $\mu=\frac{2}{3},\lambda=\frac{1}{6}$;
}\label{fig3}
\end{figure*}

\begin{example}\label{emp1}
 Still consider a fabricated solution of \eqref{eq1} with $K(x,s)=1$ and
{\color{black}the exact solution as follows: $(i)$ $u(x)=x^{\gamma_{1}}+x^{\gamma_{2}}$; $(ii)$ $u(x)=\sin(x^{\gamma_{1}}+x^{\gamma_{2}}).$}
\end{example}

This solution, which looks quite unrealistic, is served as a good example to test the capability of
the proposed method in difficult situation. It is clear that for two general parameters $\gamma_1$ and
$\gamma_2$, it is not always possible to make $u(x^{1/\lambda})$ smooth. However, a careful examination
shows that $u(x^{1/\lambda})\in B^{2\gamma/\lambda+\beta+1-\epsilon}_{\omega^{\alpha,\beta,1}}(I)$ for any $\epsilon>0$, where
\begin{equation*}
\gamma=\begin{cases}
\begin{array}{r@{}l}
&\gamma_{1},\ \ \ \ \ \ \gamma_{1}~ is~ not~ an~ integer~ and~\gamma_{2}~ is~ an~ integer,\\
&\gamma_{2},\ \ \ \ \ \ \gamma_{1}~ is~ an~ integer~ and~\gamma_{2}~ is~not~ an~ integer,\\
&\infty,    \ \ \ \ \ \ \gamma_{1}~and~\gamma_{2}~are~both~integer,\\
&\min\{\gamma_{1},\gamma_{2}\},\ \ \ \ \ \ others.
\end{array}
\end{cases}
\end{equation*}
The numerical errors in log-log scale for a number of different $\gamma_1, \gamma_2, \mu, \lambda$, and
$\beta$ are presented in Figure \ref{fig4}.
Since $K(x,s)$ is smooth, the second terms in the error estimates \eqref{ex4} and \eqref{equati4}
are expected to be negligible as compared with the first terms in these estimates for large $N$.
As a consequence, the result shown in the figure should reflect the impact of the regularity of
$u(x^{1/\lambda})$ on the error behavior.
To closely observe the error decay rates, the $N^{-(2\gamma/\lambda+\beta+1)}$ and
$N^{-(2\gamma/\lambda+\beta+1)+1/2}$ decay rates are also plotted in the figure.
We have the following observations from this test:
1) all the error curves are straight lines in this log-log representation,
which indicates the algebraic convergence. This is consistent with the limited regularity
of $u(x^{1/\lambda})$;
2) the convergence rate is in a good agreement with the theoretical prediction
given in \eqref{ex4} and \eqref{equati4},
i.e., order $N^{-(2\gamma/\lambda+\beta+1)+1/2}$ and $N^{-(2\gamma/\lambda+\beta+1)}$  respectively;
3) decreasing $\lambda$ increases the regularity of $u(x^{1/\lambda})$, thus results in improvement of
the convergence.

\begin{figure*}[htbp]
\begin{minipage}[t]{0.48\linewidth}
\centerline{\includegraphics[scale=0.56]{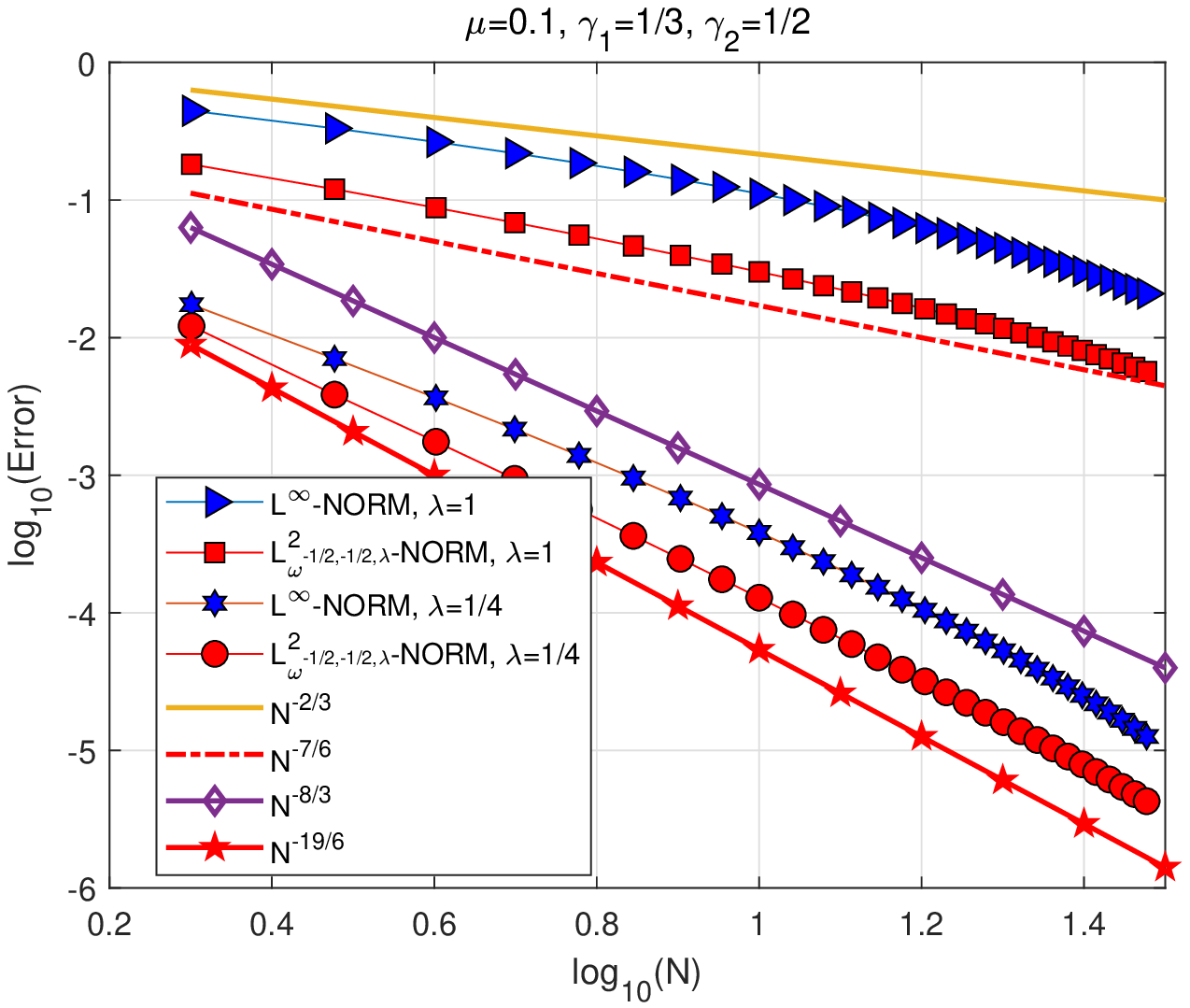}}
\centerline{(a)}
\end{minipage}
\begin{minipage}[t]{0.48\linewidth}
\centerline{\includegraphics[scale=0.56]{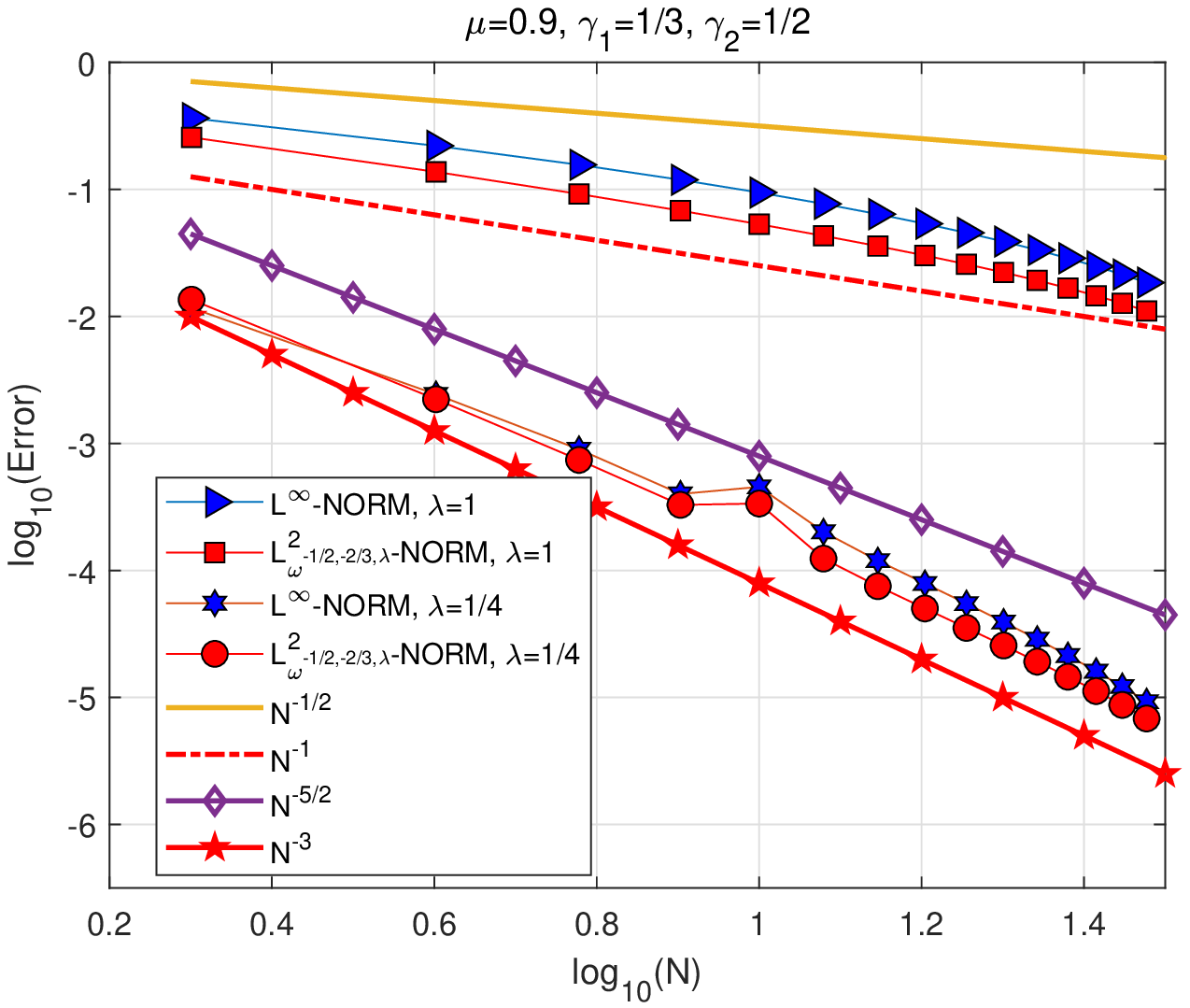}}
\centerline{(b)}
\end{minipage}
\vskip 3mm
\begin{minipage}[t]{0.48\linewidth}
\centerline{\includegraphics[scale=0.56]{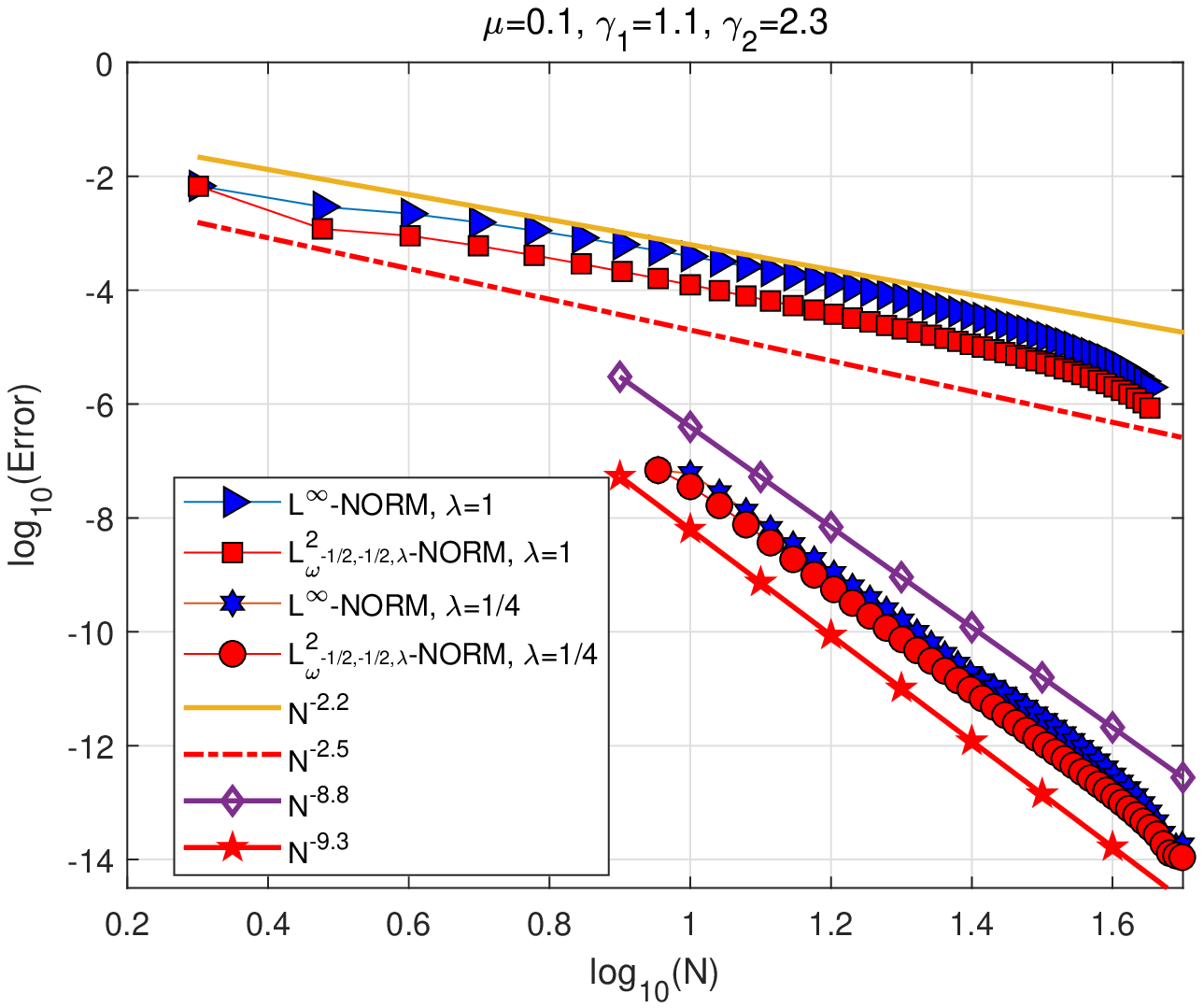}}
\centerline{(c)}
\end{minipage}
\begin{minipage}[t]{0.48\linewidth}
\centerline{\includegraphics[scale=0.56]{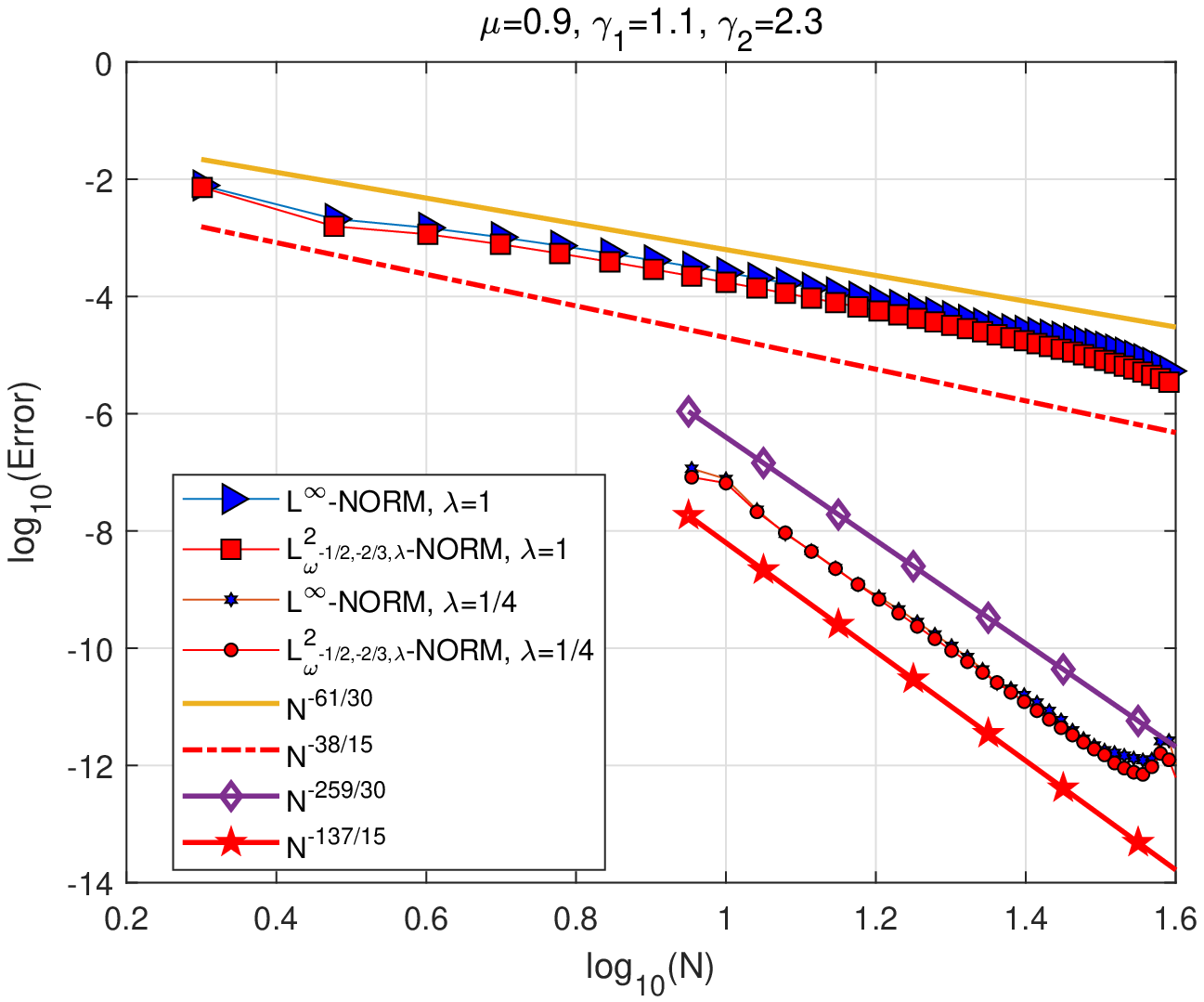}}
\centerline{(d)}
\end{minipage}
\vskip 3mm
\begin{minipage}[t]{0.48\linewidth}
\centerline{\includegraphics[scale=0.56]{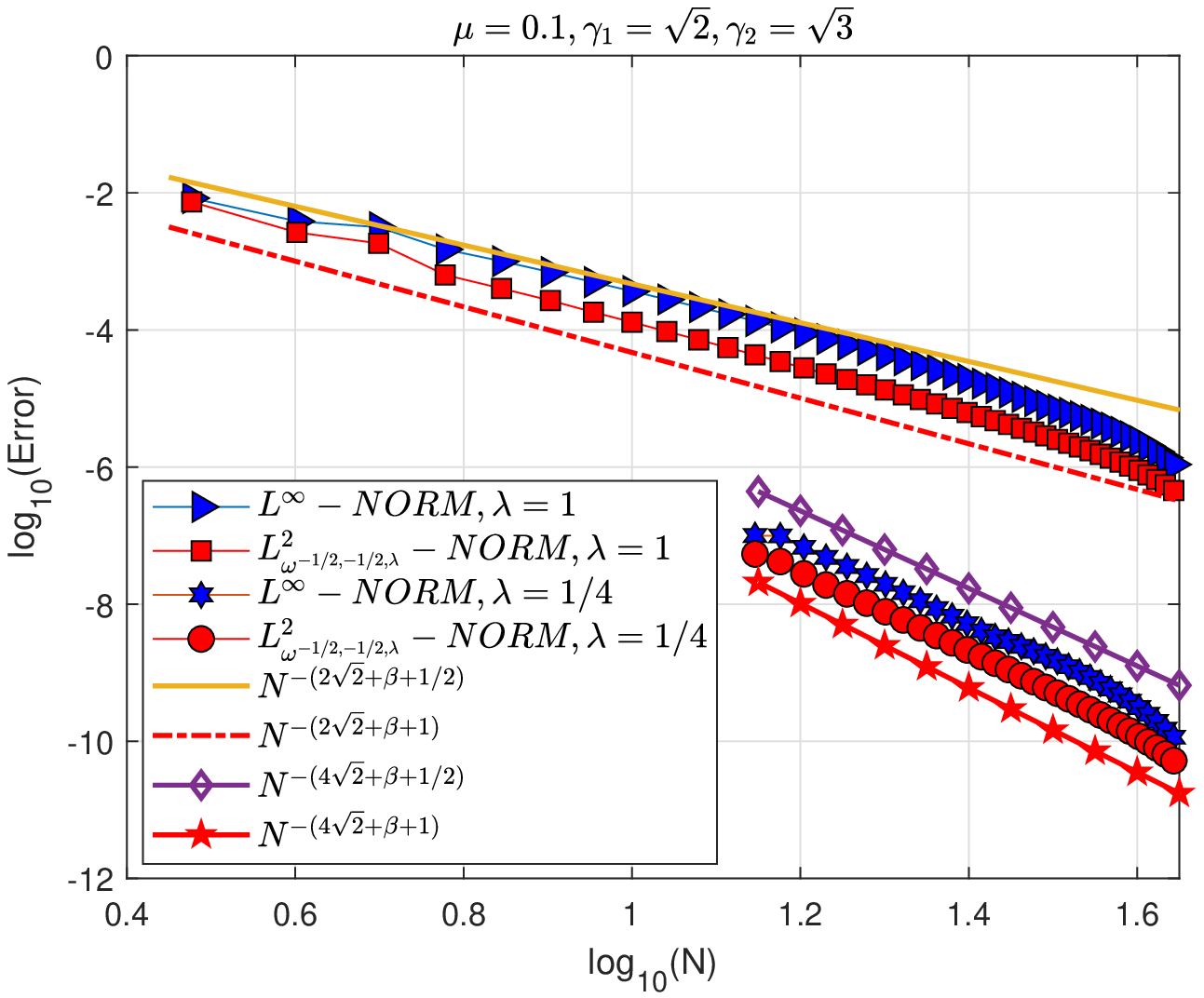}}
\centerline{(e)}
\end{minipage}
\begin{minipage}[t]{0.48\linewidth}
\centerline{\includegraphics[scale=0.56]{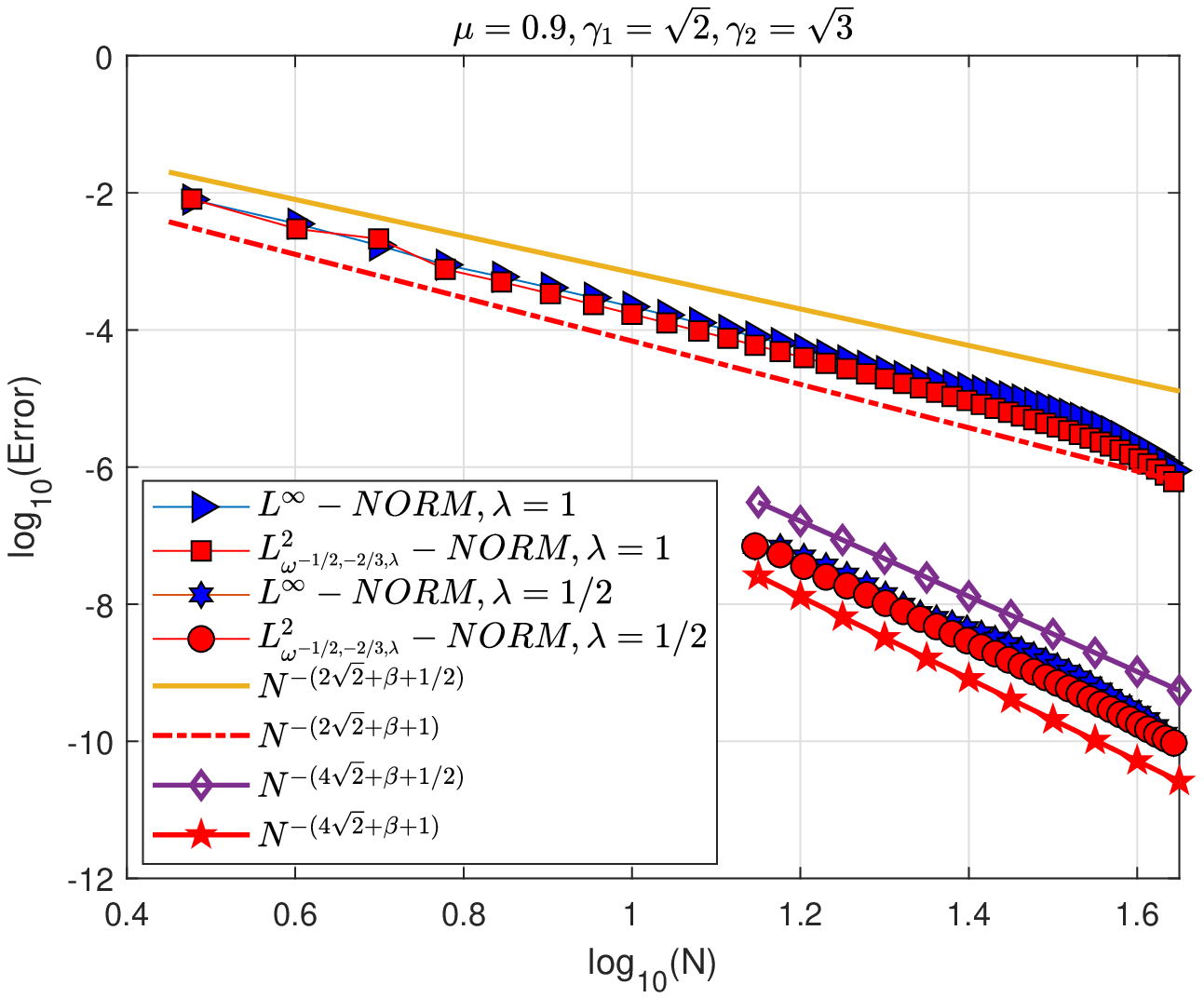}}
\centerline{(f)}
\end{minipage}

\caption{(Example \ref{emp1}) Errors versus $\lambda-$polynomial degree $N$ for
(i) $u(x)=x^{1/3}+x^{1/2}$:
(a) $\mu=0.1,\lambda=1$ or $1/4, \alpha=\beta=-1/2$;
(b) $\mu=0.9,\lambda=1$ or $1/4, \alpha=-1/2, \beta=-2/3$.
(ii) $u(x)=x^{1.1}+x^{2.3}$:
(c) $\mu=0.1,\lambda=1$ or $1/4, \alpha=\beta=-1/2$;
(d) $\mu=0.9,\lambda=1$ or $1/4, \alpha=-1/2, \beta=-2/3$.
(iii) $u(x)=\sin(x^{\sqrt{2}}+x^{\sqrt{3}})$:
(e) $\mu=0.1,\lambda=1$ or $1/2, \alpha=\beta=-1/2$;
(f) $\mu=0.9,\lambda=1$ or $1/2, \alpha=-1/2, \beta=-2/3$.
}\label{fig4}
\end{figure*}

\section{Concluding remarks}

In this work we investigated numerical solutions of the Volterra integral equations
with the weakly singular kernel $(x-s)^{-\mu}$, $0<\mu<1$.
The main difficulty in approximating this equation is that
the derivative of the solution is singular. This has resulted in low order convergence for any methods
using the traditional polynomials.
To overcome this difficulty, we established a framework of approach based on the fractional
Jacobi polynomials for the Volterra integral equations. Precisely,
a fractional Jacobi spectral-collocation method is constructed and analyzed for the underlying equation.
The significance of the approach is that it makes use of the fractional polynomials as the approximation
space, which can well capture typical solution structures of the singular integral equations.
It was known that a typical solution of the Volterra integral equations
with weakly singular kernel behaves like a series of power functions
$x^\gamma$. We have seen from the presented analysis that such functions can be much better approximated
by the space span$\{1, x^\lambda, x^{2\lambda}, \dots, x^{N\lambda}\}$ than the traditional polynomial space
span$\{1, x, x^2, \dots, x^N\}$ as long as $\lambda$ is suitably chosen.
In order to carry out a rigorous error
analysis, we first
established some approximation results for the weighted projection and interpolation operators. Then
we derived the error estimates in the $L^{\infty}-$ and weighted $L^{2}-$norms for
the proposed method.
A series of numerical examples were carried out to verify the theoretical claims.
The most remarkable property of the new method is its capability to
achieve spectral convergence for solutions of limited regularity.
{\color{black} It is worth mentioning that
the choice of $\lambda$ is also of importance for the efficiency of the new method.
Although there does not exist optimal choice for $\lambda$ for general problems,
it can be made according to the following strategy:
in case the regularity of the exact solution is unavailable,
the parameter $\lambda$ can be taken like $1/q$ with moderately large integer $q$ so that
$u(x^{q})$ is smooth enough. Our numerical experiments have shown that doing this
can increase the convergence rate about $q$ times.
}

\bibliographystyle{plain}
\bibliography{ref}
\end{document}